\newcommand\ZZ{{\hat{\mathbb Z}}}
\newcommand\Z{{\mathbb Z}}
\newcommand\Q{{\mathbb Q}}
\newcommand\F{{\mathbb F}}
\newcommand\QQ{\overline{\mathbb Q}}
\newcommand\K{\Bbbk}
\newcommand\C{{\mathbb C}}
\newcommand\N{{\mathbb N}}
\newcommand\cL{{\mathscr L}}
\newcommand\Tau{{\mathscr T}}
\newcommand\hL{\hat{\mathscr L}}
\newcommand\cP{{\mathscr P}}
\newcommand\ra{\rightarrow}
\newcommand\ilim{\lim\limits_{\longleftarrow}\,}
\newcommand\Sp{\mathrm{Sp}}
\newcommand\Spec{\mathrm{Spec}}
\newcommand\aut{\mathrm{Aut}}
\newcommand\out{\mathrm{Out}}
\newcommand\inn{\mathrm{inn}}
\newcommand\hookra{\hookrightarrow}
\newcommand\tura{\twoheadrightarrow}
\newcommand\da{\downarrow}
\newcommand\dd{\partial}
\renewcommand{\hom}{\mathrm{Hom}}
\renewcommand{\Im}{\mathrm{Im}} 
\newcommand\sr{\stackrel}
\newcommand\st{\scriptstyle}
\newcommand\sst{\scriptscriptstyle}
\newcommand\cGG{\check{\GG}}
\newcommand\hGG{\hat{\GG}}
\newcommand\hP{\hat{\Pi}}
\newcommand\ccC{\check{C}}
\newcommand\ssm{\smallsetminus}
\newcommand\ol{\overline}
\newcommand\ccM{\overline{\mathcal M}}
\newcommand\cM{{\mathcal M}}
\newcommand\cA{{\mathcal A}}
\newcommand\cN{{\mathcal N}}
\newcommand\cC{{\mathscr C}}
\newcommand\cE{{\mathscr E}}
\newcommand\tcC{\widetilde{\mathscr C}}
\newcommand\cD{{\cal D}}
\newcommand\cG{{\mathscr G}}
\newcommand\cS{{\mathscr S}}
\newcommand\GG{\Gamma}
\newcommand\ld{\lambda}
\newcommand\up{\upsilon}
\newcommand\Ld{\Lambda}
\newcommand\wt{\widetilde}
\newcommand\tcM{\wt{\cM}}
\newcommand\td{\tilde}
\newcommand\sg{\sigma}
\newcommand\Sg{\Sigma}
\newcommand\gm{\gamma}
\newcommand\bt{\bullet}
\def\co{\colon\thinspace}
\newtheorem{theorem}{Theorem}[section]
\newtheorem{corollary}[theorem]{Corollary}
\newtheorem{proposition}[theorem]{Proposition}
\newtheorem{lemma}[theorem]{Lemma}
\theoremstyle{definition}
\newtheorem{definition}[theorem]{Definition}   
\newtheorem{remark}[theorem]{Remark}
\begin{document}

\title{On the procongruence completion\\ of the Teichm\"uller modular group}
\author{Marco Boggi}\maketitle

\begin{abstract}
For $2g-2+n>0$, the Teichm{\"u}ller modular group $\GG_{g,n}$ of a compact Riemann
surface of genus $g$ with $n$ points removed $S_{g,n}$ is the group
of homotopy classes of diffeomorphisms of $S_{g,n}$ which preserve the
orientation of $S_{g,n}$ and a given order of its punctures. Let $\Pi_{g,n}$ be the fundamental
group of $S_{g,n}$, with a given base point, and $\hP_{g,n}$ its profinite completion.
There is then a natural faithful representation $\GG_{g,n}\hookra\out(\hP_{g,n})$. 
{\it The procongruence Teichm{\"u}ller group} $\cGG_{g,n}$ is defined to be the closure
of the Teichm{\"u}ller group $\GG_{g,n}$ inside the profinite group $\out(\hP_{g,n})$.

In this paper, we begin a systematic study of the procongruence completion $\cGG_{g,n}$. 
The set of {\it profinite Dehn twists} of $\cGG_{g,n}$ is the closure, inside this group, of the set 
of Dehn twists of $\GG_{g,n}$. The main technical result of the paper is a parametrization of the 
set of profinite Dehn twists of $\cGG_{g,n}$ and the subsequent description of their centralizers 
(\S~\ref{profinite twists} and \ref{centralizers}). This is the basis for the Grothendieck-Teichm\"uller
Lego with procongruence Teichm\"uller groups as building blocks.

As an application, in Section~\ref{Galois}, we prove that some Galois representations 
associated to hyperbolic curves over number fields and their moduli spaces are faithful. 
\newline

\noindent
{\bf MSC2010:} 14H10, 30F60, 11F80, 14H30, 14F35.
\end{abstract}

\section{Introduction}
Let $C$ be a hyperbolic curve defined over a number field $\K$. Let us fix an embedding 
$\K\subset\ol{\Q}$ and a $\ol{\Q}$-valued point $\tilde{\xi}\in C$. Then, the structural morphism
$C\ra\Spec(\K)$ induces a short exact sequence of algebraic fundamental groups:
$$1\ra\pi_1(C\times_\K\ol{\Q},\tilde{\xi})\ra\pi_1(C,\tilde{\xi})\ra G_\K\ra 1,$$
where the absolute Galois group $G_\K$ is identified with the algebraic 
fundamental group $\pi_1(\Spec(\K),\Spec(\ol{\Q}))$. 
Associated to the above short exact sequence, is the outer Galois representation:
$$\rho_C\co G_\K\ra\out(\pi_1(C\times_\K\ol{\Q},\tilde{\xi})).$$
This representation plays a central role in the arithmetic geometry of curves.

The representation $\rho_C$ can be interpreted as the monodromy representation associated to the 
curve $C\ra\Spec(\K)$ and the given choices of base points. Hence, it can be recovered from
the arithmetic universal monodromy representation as follows.
 
Let us suppose that $C$ is an $n$-punctured, genus $g$ curve, for $2g-2+n>0$.
Let then $\cM_{g,n}$ be the moduli stack of smooth $n$-pointed, genus $g$ curves
defined over some number field. It is a smooth irreducible Deligne--Mumford stack of dimension 
$3g-3+n$ defined over $\Spec(\Q)$, endowed with a universal $n$-punctured, genus $g$ curve 
$\cC\ra\cM_{g,n}$. Let $\xi\in\cM_{g,n}$ be the point corresponding to the curve $C$ and $\ol{\xi}$ 
the $\QQ$-valued point lying over it. Let us identify the fiber $\cC_{\ol{\xi}}$ with the curve 
$C\times_\K\QQ$. There is then a short exact sequence of algebraic fundamental groups:
$$1\ra\pi_1(C\times_\K\ol{\Q},\tilde{\xi})\ra\pi_1(\cC,\tilde{\xi})\ra\pi_1(\cM_{g,n},\ol{\xi})\ra 1.$$
The associated outer representation
$$\mu_{g,n}\co\pi_1(\cM_{g,n},\ol{\xi})\ra\out(\pi_1(C\times_\K\ol{\Q},\tilde{\xi}))$$
is {\it the arithmetic universal monodromy representation}.

The morphism of pointed stacks $\xi\co(\Spec(\K),\Spec(\QQ))\ra(\cM_{g,n},\ol{\xi})$
induces a homomorphism $\xi_\ast\co G_\K\ra\pi_1(\cM_{g,n},\ol{\xi})$ on algebraic fundamental 
groups. The representation $\rho_C$ is then obtained composing $\xi_\ast$ with the arithmetic 
universal monodromy representation $\mu_{g,n}$. Thus, it is not a surprise that the representation 
$\mu_{g,n}$ contains essential informations on the properties shared by all Galois outer 
representations associated to smooth $n$-punctured, genus $g$ arithmetic curves. The study of 
the representation $\mu_{g,n}$ is one of the principal purposes of this paper.

The structural morphism $\cM_{g,n}\ra\Spec(\Q)$ induces the short exact sequence:
$$1\ra\pi_1(\cM_{g,n}\times\ol{\Q},\ol{\xi})\ra\pi_1(\cM_{g,n},\ol{\xi})\ra G_\Q\ra 1.$$
The left term of this short exact sequence is called {\it the geometric algebraic
fundamental group of} $\cM_{g,n}$. The reason for this terminology is that
it is naturally isomorphic to the profinite completion of the topological fundamental group with base 
point $\ol{\xi}$ of the complex analytic stack associated to the complex Deligne-Mumford stack 
$\cM_{g,n}\times\C$ (cf. \cite{N1}, \cite{N2}).

Let $S_{g,n}$ be an $n$-punctured, genus $g$ Riemann surface and let $\GG_{g,n}$ be the 
associated Teichm\"uller modular group. The choice of a homeomorphism 
$\phi\co S_{g,n}\ra (C\times_\K\C)^{\mathrm{top}}$ determines an isomorphism between $\GG_{g,n}$ 
and $\pi_1^{\mathrm{top}}(\cM_{g,n}\times\C,\ol{\xi})$ (for more details, see Section~\ref{levels}). 
So that we get also an identification of the geometric algebraic fundamental group of $\cM_{g,n}$ 
with the profinite completion of the Teichm\"uller group $\GG_{g,n}$, which we simply call 
{\it the profinite Teichm\"uller group} $\hGG_{g,n}$.

Let us denote by $\Pi_{g,n}$ the topological fundamental group 
$\pi_1(S_{g,n},\phi^{-1}(\tilde{\xi}))$ and by $\hP_{g,n}$ its profinite completion. The restriction of 
the arithmetic universal monodromy representation $\mu_{g,n}$ to the geometric algebraic 
fundamental group then induces a representation:
$$\hat{\rho}_{g,n}\co\hGG_{g,n}\ra\out(\hP_{g,n}),$$
which we call {\it the profinite universal monodromy representation}. This can be also described
as the homomorphism of profinite groups induced by the canonical homotopy action of 
$\GG_{g,n}$ on $S_{g,n}$ (see Section~\ref{levels}).

It is clear that, in order to gain some insight on the representation $\mu_{g,n}$, we need to study 
before the representation $\hat{\rho}_{g,n}$. The study of the latter representation is the main 
subject of the paper from Section~\ref{levels} till Section~\ref{centralizers}.

The first question which arises about the representation $\hat{\rho}_{g,n}$ is whether or not it is
faithful. This is better known as {\it the congruence subgroup problem} for the
Teichm\"uller group $\GG_{g,n}$. An affirmative answer is known only in genus $\leq 2$
(cf. \cite{Asada}, \cite{Hyp2}). 

The investigation underlying this paper was stimulated by the idea that, in order to advance in
all the above issues, it was necessary a complete understanding of the group-theoretic
properties of the image of the representation $\hat{\rho}_{g,n}$, which we denote
by $\cGG_{g,n}$ and call {\it the procongruence Teichm\"uller group}.

The combinatorial group theory of the Teichm\"uller group $\GG_{g,n}$ begins with the study of the
relations occurring between words in its standard set of generators which are the Dehn twists.
{\it The set of profinite Dehn twists} of the procongruence Teichm\"uller group $\cGG_{g,n}$ is the 
closure of the image of the set of Dehn twists via the natural monomorphism 
$\GG_{g,n}\hookra\cGG_{g,n}$. 

The key technical result of this paper is the characterization of powers of profinite twists
in $\cGG_{g,n}$ which is given in Theorem~\ref{parametrize}. As an immediate consequence, we 
get a description of the centralizers and of the normalizers of the closed abelian subgroups 
spannned by set of powers of profinite twists in $\cGG_{g,n}$, in perfect analogy with the classical 
results in $\GG_{g,n}$ (see Corollary~\ref{centralizer2} and Theorem~\ref{normalizer}). 

Deeper and more involved relations occurring between Dehn twists in $\GG_{g,n}$ are 
encoded in the various curve complexes which can be associated to $S_{g,n}$. Here, we mention
just the most important one: the complex of curves $C(S_{g,n})$. This is the simplicial complex whose
simplices are given by sets of distinct, non-trivial, isotopy classes of simple closed curves (briefly 
s.c.c.) on $S_{g,n}$, such that they admit a set of disjoint representatives none of them bounding a 
disc with a single puncture. This complex is endowed with a natural simplicial action of the 
Teichm\"uller group $\GG_{g,n}$. 


Sections~\ref{completions} and \ref{geocomplex} are devoted 
to construct a satisfactory profinite analogue of the curve complex $C(S_{g,n})$. As the final result of 
our efforts, we get a simplicial profinite complex $L(\hP_{g,n})$ which we call {\it the complex
of profinite curves}. Eventually, in Theorem~\ref{intrinsic}, this is characterized as the simplicial
complex whose $k$-simplices are the closed abelian subgroups
of rank $k+1$ spanned by profinite Dehn twists in $\cGG_{g,n}$. 

The importance of the complex of profinite curves is well illustrated by Corollary~7.3 \cite{sym},
where it is shown that the obstruction for the congruence subgroup property to hold in genus greater 
than $2$ is the profinite fundamental group of $L(\hP_{g})$, for $g>2$.

All the results mentioned above then form the basis for the study of Galois representations associated
to hyperbolic curves carried out in Section~\ref{Galois}. The main result is that the outer 
representation $\rho_C$ associated to a hyperbolic curve $C$ over a number field $\K$ is faithful 
(Theorem~\ref{outer-galois}). The faithfulness of the arithmetic universal monodromy
representation $\mu_{g,n}$ is instead reduced to the congruence subgroup problem for $\GG_{g,n}$. 
In particular, it follows that $\mu_{g,n}$ is faithful for $g\leq 2$ (Corollary~\ref{universal-faithful}). 
The proofs of all these statements, especially Theorem~\ref{outer}, are inspired by the ideas of 
Grothendieck-Teichm\"uller theory as they appeared in the \textsl{Esquisse d'un Programme} 
\cite{Esquisse}. With a substantially different approach, similar results have been proved
in \cite{Matsu} and \cite{H-M}.

\section{Level structures over moduli of curves}\label{levels}

The study we are going to carry out from Section~\ref{levels} to Section~\ref{centralizers} is in 
its essence topological. Hence, in order to avoid cumbersome notations, for a complex 
Deligne-Mumford (briefly D--M) stack $X$, we will denote
by $\pi_1(X)$ its topological fundamental group and by $\hat{\pi}_1(X)$ its algebraic fundamental
group (when unnecessary, we omit to mention base-points). 
This notation is consistent with the fact that $\hat{\pi}_1(X)$ is isomorphic to the profinite
completion of $\pi_1(X)$. For the same reasons, $\ccM_{g,n}$, for $2g-2+n>0$, 
will denote the stack of $n$--pointed, genus $g$, stable complex algebraic
curves. It is a smooth irreducible proper complex D--M stack of dimension $3g-3+n$, and it 
contains, as an open substack, the stack $\cM_{g,n}$ of $n$--pointed, genus $g$, smooth complex 
algebraic curves. We will keep the same notations to denote the respective underlying analytic 
and topological stacks. 

The stack $\ccM_{g,n}$ is simply connected. On the contrary, 
the stack $\cM_{g,n}$ has plenty of non-trivial coverings which we are briefly going to introduce in 
this section. We refer to \cite{sym} for more details and references on all the following constructions. 

The universal cover of $\cM_{g,n}$ is the Teichm{\"u}ller space $T_{g,n}$. This is the stack of 
$n$-pointed, genus $g$, smooth 
complex analytic curves $(\cE\ra{\mathscr U}, s_1,\ldots, s_n)$ endowed with a topological 
trivialization $\Phi\co S_{g,n}\times{\mathscr U}\sr{\sim}{\ra} \cE\ssm\cup_{i=1}^n s_i({\mathscr U})$ 
over ${\mathscr U}$, where two such trivializations are considered equivalent when they are 
homotopic over ${\mathscr U}$. We then denote the corresponding object of $T_{g,n}$ by 
$(\cE\ra{\mathscr U}, s_1,\ldots, s_n,\Phi)$ or, when ${\mathscr U}$ is just one point, simply 
by $(E,\Phi)$. The complex analytic stack $T_{g,n}$ is represented by a contractible complex 
manifold. Then, the natural map of complex analytic stacks $T_{g,n}\ra\cM_{g,n}$ is a universal 
cover. The deck transformation group of this covering is described as follows. 

Let $\hom^+(S_{g,n})$ be the group of orientation preserving self-homeomorphisms
of $S_{g,n}$ and by $\hom^0(S_{g,n})$ the subgroup consisting of homeomorphisms homotopic to 
the identity. {\it The mapping class group} or {\it Teichm\"uller modular group} $\GG_{g,[n]}$
is classically defined to be the group of homotopy classes of homeomorphisms of $S_{g,n}$ 
which preserve the orientation:
$$\GG_{g,[n]}:=\left.\hom^+(S_{g,n})\right/\hom^0(S_{g,n}),$$  
where $\hom_0(S_{g,n})$ is the connected component of the identity 
in the topological group of homeomorphisms $\hom^+(S_{g,n})$. 
This group then is the group of deck transformation of the \'etale covering $T_{g,n}\ra\cM_{g,[n]}$, 
where the latter denotes  the stack of genus $g$, smooth complex curves with $n$ unordered 
markings. Therefore, there is a short exact sequence:
$$1\ra\GG_{g,n}\ra\GG_{g,[n]}\ra\Sigma_n\ra 1,$$
where $\Sigma_n$ denotes the symmetric group on the set of punctures of $S_{g,n}$
and $\GG_{g,n}$ is {\it the pure mapping class group} or {\it pure Teichm\"uller modular group} 
associated to $S_{g,n}$ and the deck transformation group of the covering $T_{g,n}\ra\cM_{g,n}$. 

There is a natural way to define homotopy groups for topological D--M stacks (cf. \cite{N2}), so that 
the choice of a point $a=[C]\in\cM_{g,n}$ and a homeomorphism $\phi\co S_{g,n}\ra C$ identifies 
the topological fundamental groups $\pi_1(\cM_{g,n},a)$ and $\pi_1(\cM_{g,[n]},a)$ with the pure 
mapping class group $\GG_{g,n}$ and with the mapping class group $\GG_{g,[n]}$, respectively.

Let $p\co\cC\ra\cM_{g,[n]}$ be the universal $n$-punctured, genus $g$ curve. 
Since $p$ is a Serre fibration and $\pi_2(\cM_{g,[n]})=\pi_2(T_{g,n})=0$, there is a short exact 
sequence
$$1\ra\pi_1(\cC_a,\tilde{a})\ra\pi_1(\cC,\tilde{a})\ra\pi_1(\cM_{g,[n]},a)\ra 1,$$
where $\tilde{a}$ is a point in the fiber $\cC_a$.
By a standard argument this defines a monodromy representation:
$$\rho_{g,[n]}\co\pi_1(\cM_{g,[n]},a)\ra \mbox{Out}(\pi_1(\cC_a,\tilde{a})),$$
called {\it the universal monodromy representation}. We then denote by $\rho_{g,n}$ its
restriction to the pure mapping class group $\GG_{g,n}$ and call it the same way.

Let us then fix a homeomorphism $\phi\co S_{g,n}\ra\cC_a$ and let
$\Pi_{g,n}$ be the fundamental group of $S_{g,n}$ based in
$\phi^{-1}(\tilde{a})$. Then, the representation $\rho_{g,[n]}$ is identified with
the faithful representation $\GG_{g,[n]}\hookra\out(\Pi_{g,n})$, induced by the 
homotopy action of $\GG_{g,[n]}$ on the Riemann surface $S_{g,n}$.

In this paper, a level structure $\cM^\ld$ is a finite, connected, Galois, {\'e}tale covering of the stack 
$\cM_{g,[n]}$ (by {\'e}tale covering, we mean here an {\'e}tale, surjective, representable morphism of
algebraic stacks), therefore it is also represented by a smooth D--M stack $\cM^\ld$.  This is
in contrast with \cite{PFT}, where the covering was not required to be Galois.
{\it The level} associated to $\cM^\ld$ is the finite index normal subgroup
$\GG^\ld:=\pi_1(\cM^\ld, a')$ of the Teichm{\"u}ller group $\GG_{g,[n]}$.

For a given level $\GG^\ld$ of $\GG_{g,[n]}$, the intersection $\GG^\ld\cap\GG_{g,n}$ is also
a level of $\GG_{g,[n]}$ which we denote by $\GG^\ld_{g,n}$. The corresponding level structure is
denoted by $\cM_{g,n}^\ld$. Equivalently, the level structure $\cM_{g,n}^\ld$ is the pull-back over 
$\cM_{g,n}\ra\cM_{g,[n]}$ of the level structure $\cM^\ld$.

The most natural way to define levels is by means of the universal monodromy representation 
$\rho_{g,[n]}$. For a characteristic subgroup $K\le\Pi_{g,n}$, let us define the representation:
$$\rho_K\co\GG_{g,[n]}\ra \mbox{Out}(\Pi_{g,n}/K),$$
whose kernel we denote by $\GG^K$. When $K$ has finite index in $\Pi_{g,n}$, then
$\GG^K$ has finite index in $\GG_{g,[n]}$ and is called {\it the geometric level} associated to
$K$. The corresponding level structure is denoted by $\cM^{K}$. 

Of particular interest are the levels defined by the kernels of the representations:
$$\rho_{(m)}\co\GG_{g,[n]}\ra \mbox{Sp}(H_1(S_g,\Z/m)),\;\;\mbox{ for }m\geq 2.$$
They are denoted by $\GG(m)$ and called {\it abelian levels of order $m$}. The corresponding
level structures are then denoted by $\cM^{(m)}$. A classical result of Serre says that
an automorphism of a smooth curve acting trivially on its first homology group with $\Z/m$
coefficients, for $m\geq 3$, is trivial. This implies that any level structure
$\cM^\ld$ dominating an abelian level structure $\cM^{(m)}$, with $m\geq 3$, is 
representable in the category of algebraic varieties.

Another way to define levels, introduced in \cite{sym}, will be extremely useful here. Let us 
briefly recall this construction, referring for more details to \cite{sym}.

Let $K$ be a proper normal finite index subgroup of $\Pi_{g,n}$, which is invariant for the action
of $\GG_{g,[n]}$, and let $p_K\co S_K\ra S_{g,n}$ be 
the \'etale Galois covering with deck transformation group $G_K$, associated to such subgroup.  
Let us denote by $\GG(S_K)$ the orientation preserving mapping class group of the Riemann 
surface $S_K$. There are then a natural monomorphism $G_K\hookra\GG(S_K)$ and a natural
epimorphism from the normalizer $N_{\GG(S_K)}(G_K)$ of $G_K$ in $\GG(S_K)$ to the mapping
class group $\GG_{g,[n]}$. So, there is a natural short exact sequence:
$$1\ra G_K\ra N_{\GG(S_K)}(G_K)\ra\GG_{g,[n]}\ra 1.\hspace{1cm}(2.1)$$

From Hurwitz's Theorem and the fact that $K$ is a proper invariant subgroup of $\Pi_{g,n}$, it follows 
that, for $g\geq 1$ or $n\geq 4$, the compact Riemann surface $\ol{S}_K$ obtained filling in the 
punctures of $S_K$ has genus at least one. For $m\geq 2$, let us then consider the natural 
representation $\rho_{(m)}\co\GG(S_K)\ra\Sp(H_1(\ol{S}_K,\Z/m))$.

For $m\geq 3$ and $m=0$, the restriction of $\rho_{(m)}$ to $G_K$ is faithful. For $m=2$, the 
restriction $\rho_{(m)}|_{G_K}$ is faithful unless $G_K$ contains a hyperelliptic involution, in which 
case the hyperelliptic involution generates the kernel of $\rho_{(m)}|_{G_K}$ (cf. Lemma~2.9, 
Ch. XVII, \cite{A-C}). In particular, for $g\geq 1$ or, for $g=0$, if $[\Pi_{0,n}:K]>2$, the group $G_K$ 
does not contain a hyperelliptic involution and so the restriction of $\rho_{(2)}$ to $G_K$ is faithful 
as well. 

Let us assume that the restriction $\rho_{(m)}|_{G_K}$ is faithful and let us denote by $G_K$  
its image. For $m\geq 2$, there is a natural representation:
$$\rho_{K,(m)}\co\GG_{g,[n]}\ra\left. N_{\Sp(H_1(\ol{S}_K,\Z/m))}(G_K)\right/G_K.$$
Let us denote the kernel of $\rho_{K,(m)}$ by $\GG^{K,(m)}$ and call it {\it the Looijenga level} 
associated to the subgroup $K$ of $\Pi_{g,n}$. 
The corresponding level structure is denoted by $\cM^{K,(m)}$.

Geometric levels can also be described in terms of the exact sequence $(2.1)$.
The geometric level $\GG^K$ associated to $K$ is indeed the set of elements of $\GG_{g,[n]}$ 
which admit a lift, by the natural epimorphism $N_{\GG(S_K)}(G_K)\tura\GG_{g,[n]}$,  to 
the centralizer $Z_{\GG(S_K)}(G_K)$. Thus, there is a short exact sequence:
$$1\ra Z(G_K)\ra Z_{\GG(S_K)}(G_K)\ra\GG^K\ra 1,\hspace{1cm}(2.2)$$
where $Z(G_K)$ is the center of the group $G_K$. If this center is trivial, the geometric level 
$\GG^K$ is naturally identified with the centralizer of $G_K$ inside $\GG(S_K)$.

A key fact is that the tower of Loojenga levels is equivalent to the tower
of geometric levels. More precisely, it holds (Theorem~2.2 and Corollary~2.3 in \cite{sym}):

\begin{theorem}\label{comparison}\begin{enumerate}
\item For $2g-2+n>0$, let $K$ be a finite index invariant subgroup 
of $\Pi_{g,n}$. Then, for $m\geq 3$, or, if the quotient group $G_K$ does not contain a hyperelliptic 
involution, for $m\geq 2$, there is an inclusion of levels $\GG^{K,(m)}\unlhd\GG^K$. 

\item For any fixed integer $m\geq 2$, the set of Looijenga levels 
$\{\GG^{K,(m)}\}_{K\unlhd\Pi_{g,n}}$ forms an inverse system of finite index normal subgroups of 
$\GG_{g,[n]}$ which defines the same profinite topology as the tower of geometric levels 
$\{\GG^K\}_{K\unlhd\Pi_{g,n}}$.
\end{enumerate}
\end{theorem}

The usual way to compactify a level structure $\cM^{\ld}$ over $\cM_{g,[n]}$ is to take the 
normalization of $\ccM_{g,[n]}$ in the function field of $\cM^{\ld}$. A more functorial definition can be 
given in the category of regular log schemes. Indeed, it is easy to see that the natural morphism of 
logarithmic stacks $(\ccM^{\ld})^{log}\ra\ccM_{g,[n]}^{log}$ is log-\'etale, where $(\_)^{log}$ denotes
the logarithmic structure associated to the respective D--M boundaries 
$\dd\cM^\ld:=\ccM^\ld \ssm\cM^\ld$ and $\dd\cM_{g,[n]} :=\ccM_{g,[n]} \ssm\cM_{g,[n]}$. 
Viceversa, by the log purity Theorem, any finite, connected, log {\'e}tale Galois covering of
$\ccM_{g,[n]}^{log}$ is of the above type. 

A basic property of (compactified) level structures, proved by Brylinski and Deligne (cf. (ii),
Proposition~2 \cite{Br} and Proposition~3.2 \cite{sym}), is that 
if the level $\GG^\lambda$ is contained in an abelian level of order $m$, for some $m\ge 3$, then 
the level structure $\ccM^{\ld}_{g,n}$ is represented by a projective variety.

An analogue of the D--M compactification $\ccM_{g,n}$ of $\cM_{g,n}$, at the level of 
Teichm\"uller space, is the Bers bordification $\ol{T}_{g,n}$ of the Teichm\"uller space 
$T_{g,n}$, which we are shortly going to describe
(see \S 3, Ch. II, \cite{Abikoff} for more details on this construction). 

For a given stable $n$-pointed, genus $g$ curve $C$, let $\cN$ be its singular set and $\cal P$ its
set of marked points. {\it A degenerate marking} $\phi\co S_{g,n}\ra C$ is a continuous 
map such that $C\ssm\Im\,\phi={\cal P}$, the inverse image $\phi^{-1}(x)$, for all $x\in\cN$, is 
a simple closed curve (briefly, s.c.c.) on $S_{g,n}$ and the restriction of the marking 
$\phi\co S_{g,n}\ssm\phi^{-1}(\cN)\ra C\ssm(\cN\cup {\cal P})$ 
is a homeomorphism. {\it The Bers bordification} $\ol{T}_{g,n}$ of the Teichm\"uller space 
$T_{g,n}$ is the real analytic space which parametrizes pairs $(C,\phi)$, consisting 
of a stable $n$-pointed, genus $g$ curve $C$ and the homotopy class
of a degenerate marking $\phi\co S_{g,n}\ra C$.

The natural action of $\GG_{g,[n]}$ on $T_{g,n}$ extends to $\ol{T}_{g,n}$. However, this action
is not anymore proper and discontinuous, since a boundary stratum has for inertia group the free
abelian group generated by the Dehn twists along the simple closed curves of $S_{g,n}$ which are
collapsed on such boundary stratum.

The geometric quotient $\ol{T}_{g,n}/\GG_{g,[n]}$ identifies with the real-analytic space underlying
the coarse moduli space $\ol{M}_{g,[n]}$ of stable $n$-pointed, genus $g$ curves.
On the other hand, the quotient stack $[\ol{T}_{g,n}/\GG_{g,[n]}]$ only admits a non-representable 
natural map to the D--M stack $\ccM_{g,[n]}$, because of the extra-inertia at infinity. 

The irreducible components of the boundary $\dd T_{g,n}:=\ol{T}_{g,n}\ssm T_{g,n}$ are
isomorphic either to $\ol{T}_{g-1,n+2}$ or to $\ol{T}_{g_1,n_1+1} \times\ol{T}_{g_2,n_2+1}$,
for some non-negative integers $n_1$, $n_2$ and $g_1$, $g_2$ such that $n=n_1+n_2$ and 
$g=g_1+g_2$, with the condition that $n_i \geq 2$ when $g_i =0$. 

The nerve of the cover of the boundary $\dd T_{g,n}$ by its irreducible components is explicitly
described by a simplicial complex called the complex of curves and denoted by
$C(S_{g,n})$. It is the simplicial complex whose simplices are sets of distinct, non-trivial 
isotopy classes of s.c.c.'s on $S_{g,n}$, such that they admit a set of disjoint representatives, 
none of them bounding a disc with a single puncture. 

The above description of the nerve of the boundary of $\ol{T}_{g,n}$ can be extended to level
structures satisfying some mild hypotheses.
For a simplex $\sg\in C(S_{g,n})$, let us denote by $\GG_\sg$ the stabilizer of 
$\sg$ for the natural action of the Teichm\"uller group $\GG_{g,n}$
on $C(S_{g,n})$ and by $\GG_{\vec{\sg}}$ the subgroup of elements of $\GG_\sg$ which 
fix all the s.c.c.'s in the simplex $\sg$, preserving moreover their orientations. It then holds
(Proposition~4.2 \cite{sym}):

\begin{proposition}\label{without inversions}For $2g-2+n>0$, let $\GG^\ld$ be a level of 
$\GG_{g,n}$ contained in an abelian level $\GG(m)$, for $m\geq 3$. Then, it holds 
$\GG^\ld\cap \GG_{\vec{\sg}}=\GG^\ld\cap \GG_\sg$, i.e. the level $\GG^\ld$
operates without inversions on the curve complex $C(S_{g,n})$
\end{proposition}

For the level structures $\ccM^\ld$ such that $\GG^\ld$ is contained in an abelian level $\GG(m)$, 
for some $m\geq 3$, there is then a direct way to realize the nerve of the cover of 
their D--M boundary by irreducible components in the category of simplicial sets. 

\begin{definition}\label{finite complex}
Let $C(S_{g,n})_\bt$ be the simplicial set associated to the simplicial complex
$C(S_{g,n})$ and an ordering of its vertices compatible with the action of a level $\GG^\ld$ 
contained in an abelian level $\GG(m)$, for some $m\geq 3$. Then, the simplicial finite set 
$C^\ld(S_{g,n})_\bt$ is defined to be the quotient of $C(S_{g,n})_\bt$ by the simplicial action 
of $\GG^\ld$. This simplicial set describes the nerve of the 
cover of $\dd\cM^\ld$ by its irreducible components.
\end{definition}

\section{Profinite completions}\label{completions}

For $2g-2+n>0$, there is a short exact sequence
$1\ra\hP_{g,n}\ra\hGG_{g,n+1}\ra\hGG_{g,n}\ra 1$.
The action induced by the restriction of inner automorphisms of $\hGG_{g,n+1}$ to 
its normal subgroup $\hP_{g,n}$ defines a representation 
$\tilde{\rho}_{g,n}\co\hGG_{g,n+1}\ra\mbox{Aut}(\hat{\Pi}_{g,n})$, which then induces 
the profinite universal monodromy representation 
$\hat{\rho}_{g,n}\co\hGG_{g,n}\ra\mbox{Out}(\hat{\Pi}_{g,n})$.

\begin{definition}\label{geometric}For $2g-2+n>0$, let us define the profinite group
$\cGG_{g,n}$ to be the image of $\hat{\rho}_{g,n}$ in $\mbox{Out}(\hat{\Pi}_{g,n})$.
We then call $\cGG_{g,n}$ {\it the procongruence completion of the Teichm\"uller 
group} or, more simply, {\it the procongruence Teichm\"uller group}.
\end{definition}
 
By definition, there is a natural continuous homomorphism $i\co\GG_{g,n}\ra\cGG_{g,n}$, with 
dense image. That this map is injective is instead a deeper result by Grossman \cite{Grossman}.

By Theorem~2.4 \cite{Hyp2}, for $2g-2+n>0$, there is a natural isomorphism of profinite groups 
$\Im\hat{\rho}_{g,n+1}\equiv\Im\tilde{\rho}_{g,n+1}$ and then a natural short exact sequence:
$$1\ra\hP_{g,n}\ra\cGG_{g,n+1}\ra\cGG_{g,n}\ra 1.$$

We will now associate to the procongruence completion of the Teichm\"uller group introduced 
above a completion of the curve complex. This, however, requires a preliminary digression.
Let us recall some definitions from \cite{PFT}.

Let $X_\bt$ be a simplicial set and $G$ a group. {\it A geometric action} of $G$ on $X_\bt$ is an 
action of $G$ on $X_n$, defined for all $n\geq 0$, satisfying the following conditions:
\begin{enumerate}
\item For all $g\in G$ and $\sg\in X_n$, one has 
$\{\dd_i\, g\!\cdot\!\sg|i=0,\ldots, n\}=\{g\cdot\dd_i\sg|i=0,\ldots, n\}$. In this way, are defined 
permutation representations $\rho_\sg: G\ra\Sigma_{n+1}$, which we require to be constant on 
$G$-orbits, i.e. $\rho_\sg(g)=\rho_{g'\cdot\sg}(g)$, for all $g,g'\in G$.
\item By the above condition, an action of $G$ on the set $\coprod_{n\geq 0}X_n\times\Delta_n$ is 
defined, for $(\sg, x)\in X_n\times\Delta_n$ and $g\in G$, letting:
$g\cdot (\sg, x):=(g\cdot\sg, \rho_\sg (g)(x))$.
We then require that this action be compatible with the equivalence relation "$\sim$" which
defines the geometric realization $|X_\bt|:=\coprod\limits_{n\geq 0}\left.X_n\times\Delta_n\right/\!\sim$.
\end{enumerate}

If, for $n\geq 0$, the actions $G\times X_n\ra X_n$ commute with the face and degeneracy 
operators, then $X_\bt$ is said to be {\it a simplicial $G$-set}. In this case the quotient sets 
$X_n/G$, for $n\geq 0$, together with the induced face and degeneracy operators, form a 
simplicial set $X_\bt/G$. Moreover, since the representations $\rho_\sg$ are trivial for all 
$\sg\in X_\bt$, there is a natural isomorphism $|X_\bt|/G\cong |X_\bt/G|$. 

Let $G$ be a profinite group and $X_\bt$ a simplicial profinite set (i.e. a simplicial object in the
category of profinite sets and their continuous maps). We define a continuous geometric action 
of $G$ on $X_\bt$ to be a geometric action such that each action $G\times X_n\ra X_n$ is 
continuous and with open orbits, for $n\geq 0$. Moreover, we require that there exists an open 
subgroup $U\leq G$ such that $X_\bt$, with the induced $U$-action, is a simplicial $U$-set. 

Let $\{G^\ld\}_{\ld\in\Ld}$ be the tower of open subgroups of $G$ contained in $U$. Then, for all
$\ld\in\Ld$, the quotient $X_\bt/G^\ld$ is a simplicial discrete finite set endowed with a geometric
continuous action of $G$.

Let now $G\ra G'$ be a homomorphism of a discrete group in a profinite group, with dense image, 
and let $\{G^\ld\}_{\ld\in\Ld}$ be the tower of sugroups of $G$ which are inverse images of open 
subgroups of $G'$. Let $X_\bt$ be a simplicial set endowed with a geometric $G$-action such that 
the set of $G$-orbits in $X_n$ is finite for all $n\geq 0$. Assume, moreover, that there exists a 
$\mu\in\Ld$ such that $X_\bt$, with the induced $G^\mu$-action, is a simplicial $G^\mu$-set. 

The $G'$-completion of $X_\bt$ is defined to be the simplicial profinite set
$$X_\bt':=\lim\limits_{\sr{\textstyle\st\longleftarrow}{\sst\ld\in\Ld}}X_\bt/G^\ld,$$
which is then endowed with a natural continuous geometric action of $G'$. The simplicial profinite 
set $X'_\bt$ has the following universal property. Let $f\co X_\bt\ra Y_\bt$ be a simplicial 
$G$-equivariant map, where $Y_\bt$ is a simplicial profinite set endowed with a continuous 
geometric action of $G'$. Then $f$ factors uniquely through the natural map
$X_\bt\ra X_\bt'$ and a simplicial $G'$-equivariant continuous map $f'\co X_\bt'\ra Y_\bt$.

Let us apply the definition of profinite $G'$-completion of a simplicial set endowed with a geometric 
$G$-action to the the Teichm{\"u}ller group $\GG_{g,n}$ and its procongruence completion 
$\cGG_{g,n}$. The tower of levels which defines the procongruence completion contains the
abelian level $\GG(m)$, for $m\geq 3$. By Proposition~\ref{without inversions}, 
there is then an ordering of the vertex set of the simplicial complex $C(S_{g,n})$ such that
the associated simplicial set $C(S_{g,n})_\bt$ is a $\GG(m)$-simplicial set endowed with a natural 
geometric $\GG_{g,n}$-action.  For this ordering, the conditions prescribed in order to define
the $\cGG_{g,n}$-completion of the simplicial set $C(S_{g,n})_\bt$ are all satisfied. 

\begin{definition}\label{geocompletion}\begin{enumerate}
\item For $2g-2+n>0$, the simplicial profinite set 
$\ccC(S_{g,n})_\bt$ is defined to be the $\cGG_{g,n}$-completion of the simplicial set 
$C(S_{g,n})_\bt$. 
\item Let us define {\it a simplicial profinite complex} to be an abstract simplicial complex whose 
set of vertices is endowed with a profinite topology such that the sets of $k$-simplices, 
with the induced topologies, are compact and then profinite, for all $k\geq 0$. For these simplicial 
complexes, the procedure which associates to an abstract simplicial complex and an ordering of
its vertex set a simplicial set produces a simplicial profinite set. In Remarks~7.6 and 7.11 \cite{sym}, 
it is shown that $\ccC(S_{g,n})_\bt$ is the simplicial profinite set associated to a simplicial profinite 
complex $\ccC(S_{g,n})$, which we call {\it the procongruence curve complex}.
\end{enumerate}
\end{definition}

There is a natural $\GG_{g,n}$-equivariant map of simplicial complexes 
$i_\ast\co C(S_{g,n})\ra \ccC(S_{g,n})$. From Proposition~5.1 \cite{PFT} and the
fact that the map $i\co\GG_{g,n}\ra\cGG_{g,n}$ is injective, it follows:

\begin{proposition}\label{injectivity}For $2g-2+n>0$, the natural 
map $i_\ast\co C(S_{g,n})\ra \ccC(S_{g,n})$ is injective.
\end{proposition}

By Proposition~6.5 \cite{PFT}, the stabilizers for the action of an open subgroup of the 
procongruence Teichm{\"u}ller group $\cGG_{g,n}$ on the procongruence curve complex 
$\ccC(S_{g,n})$ are also determined:

\begin{proposition}\label{stab=clos}For $2g-2+n>0$, let $i\co\GG_{g,n}\hookra\cGG_{g,n}$ be the
natural monomorphism, let $\cGG^\ld$ be an open subgroup of $\cGG_{g,n}$ and let 
$\GG^\ld:=i^{-1}(\cGG^\ld)$. Then, the stabilizer $\cGG^\ld_\sg$ of a simplex 
$\sg\in \Im\,C(S_{g,n})\subset C'(S_{g,n})$ for the action of the subgroup $\cGG^\ld$ is the 
closure in $\cGG_{g,n}$ of the image of the stabilizer $\GG^\ld_\sg$ for the action of 
$\GG^\ld$ on $C(S_{g,n})$.
\end{proposition}

\section{The complex of profinite curves}\label{geocomplex}
The purpose of this section is to provide an alternative, more intrinsic, description of the
simplicial profinite complex $\ccC(S_{g,n})$, in terms of 
the profinite fundamental group $\hP_{g,n}$ of the Riemann surface $S_{g,n}$.

We say that a simple closed curve (s.c.c.) $\gm$ on the Riemann surface $S_{g,n}$
is {\it non-peripheral} if it does not bound a disc with less than two punctures.

Let $\cL_{g,n}\cong C(S_{g,n})_0$, for $2g-2+n>0$, denote the set of isotopy classes of 
non-peripheral simple closed curves on $S_{g,n}$. Let $\Pi_{g,n}/\!\sim$ be the set of conjugacy 
classes of elements of $\Pi_{g,n}$ and
$\cP_2(\Pi_{g,n}/\!\sim)$ the set of unordered pairs of elements of $\Pi_{g,n}/\!\sim$. 

For a given $\gm\in\Pi_{g,n}$, let us denote by $\gm^{\pm 1}$ the set $\{\gm,\gm^{-1}\}$ 
and by $[\gm^{\pm 1}]$ its equivalence class in $\cP_2(\Pi_{g,n}/\!\sim)$. There is then a 
natural embedding $\iota\co\cL_{g,n}\hookra\cP_2(\Pi_{g,n}/\!\sim)$, defined choosing, for 
an element $\gm\in\cL_{g,n}$, an element $\vec{\gm}_\ast\in\Pi_{g,n}$ whose free homotopy class 
contains $\gm$ and letting $\iota(\gm):=[\vec{\gm}_\ast^{\pm 1}]$.

Let $\hP_{g,n}/\!\sim$ be the set of conjugacy classes of elements of $\hP_{g,n}$ and
$\cP_2(\hP_{g,n}/\!\sim)$ the profinite set of unordered pairs of elements of $\hP_{g,n}/\!\sim$. 
From combinatorial group theory (cf. \cite{Stebe}), it is known that the set $\Pi_{g,n}/\!\sim$ 
embeds in the profinite set $\hP_{g,n}/\!\sim$. So, let us define the set of {\it non-peripheral 
profinite s.c.c.'s} $\hL_{g,n}$ on $S_{g,n}$ to be the closure of the set $\iota(\cL_{g,n})$ inside the 
profinite set $\cP_2(\hP_{g,n}/\!\sim)$. When it is clear from the context, we omit the subscripts and 
denote these sets simply by $\cL$ and $\hL$. 

There is a slight abuse of terminology here because the elements of $\cL$ are {\it isotopy classes 
of non-peripheral s.c.c.'s}. Possibly, it would have been more
appropriate to call the elements of $\hL$ {\it isotopy classes of non-peripheral profinite s.c.c.'s}.
This terminology could have been, at least partially, justified in terms of the profinite completion
of the Riemann surface $S_{g,n}$ but it seemed a bit too awkward and, in any case, not entirely 
meaningful.

The profinite set $\hL$ consists of equivalence classes $[\alpha^{\pm 1}]$ of pairs of 
elements of $\hP_{g,n}$, such that $\alpha$ is the limit of a sequence in $\Pi_{g,n}$ of 
elements representable by s.c.c.'s on $S_{g,n}$.

An ordering of the set $\{\alpha,\alpha^{-1}\}$ is preserved by the conjugacy action and 
defines an {\it orientation} for the associated equivalence class $[\alpha^{\pm1}]\in\hL$.

An alternative group-theoretic description of the set of non-peripheral s.c.c.'s $\cL$ is the following. 
Let $\cS(\Pi_{g,n})/\!\sim$ be the set of conjugacy classes of cyclic subgroups of $\Pi_{g,n}$. There is 
a natural embedding $\iota'\co\cL\hookra\cS(\Pi_{g,n})/\!\sim$, defined sending $\gm\in\cL$ to the 
conjugacy class of the cyclic subgroup of $\Pi_{g,n}$ generated by an element 
$\vec{\gm}_\ast\in\Pi_{g,n}$ whose free homotopy class contains $\gm$. So, let 
$\hat{\cS}(\hP_{g,n})/\!\sim$ be the profinite set of conjugacy classes of closed cyclic subgroups 
of $\hP_{g,n}$. There is a natural embedding of $\cS(\Pi_{g,n})/\!\sim$ in $\hat{\cS}(\hP_{g,n})/\!\sim$ 
and we define $\hL'$ to be the closure of $\iota'(\cL)$ inside $\hat{\cS}(\hP_{g,n})/\!\sim$. There is a 
natural continuous surjective map $\hL\tura\hL'$. However, it is not yet clear whether 
this map is also injective.

For all $k\geq 0$, there are natural embeddings of the sets of $k$-simplices of the
curve complex $C(S_{g,n})$ into the profinite sets $\cP_{k+1}(\hL)$ and
$\cP_{k+1}(\hL')$ of unordered subsets of $k+1$ elements of $\hL$ and $\hL'$, respectively. 
Let us then define:

\begin{definition}\label{geopro}Let $L(\hP_{g,n})$ and $L'(\hP_{g,n})$, for $2g-2+n>0$, 
be the simplicial profinite complexes whose set of $k$-simplices is the closure of the set of 
$k$-simplices of the curve complex $C(S_{g,n})$ inside, respectively, the profinite sets 
$\cP_{k+1}(\hL)$ and $\cP_{k+1}(\hL')$, for $k\geq 0$. 
The simplicial profinite complex $L(\hP_{g,n})$ is called 
{\it the complex of profinite curves on $S_{g,n}$}.
\end{definition}

By their definition, the simplicial profinite complexes defined above admit a natural continuous 
action of the procongruence Teichm\"uller group $\cGG_{g,n}$ and are 
linked by a natural continuous $\cGG_{g,n}$-equivariant surjective map 
$L(\hP_{g,n})\tura L'(\hP_{g,n})$.

The embeddings $C(S_{g,n})\hookra L(\hP_{g,n})$ and $C(S_{g,n})\hookra L'(\hP_{g,n})$ 
have dense images  and are $\GG_{g,n}$-equivariant. In particular, there are only finitely many
$\cGG_{g,n}$-orbits of $k$-simplices in $L(\hP_{g,n})$ and $L'(\hP_{g,n})$, for all $k\geq 0$, and 
each of them contains a discrete representative. 

By the universal property of the $\cGG_{g,n}$-completion, for every $k\geq 0$,  
there is a series of natural surjective continuous $\cGG_{g,n}$-equivariant maps of profinite sets:
$$\ccC(S_{g,n})_k\tura L(\hP_{g,n})_k\tura L'(\hP_{g,n})_k.$$
These maps are clearly compatible with the respective structures of simplicial complexes
and induce a series of natural surjective maps of simplicial profinite complexes.

\begin{theorem}\label{curve-complex}For $2g-2+n>0$, there is a series of natural 
$\cGG_{g,n}$-equivariant isomorphisms of simplicial profinite complexes:
$\ccC(S_{g,n})\sr{\sim}{\ra}L(\hP_{g,n})\sr{\sim}{\ra}L'(\hP_{g,n}).$
\end{theorem}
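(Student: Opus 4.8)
The plan is to exploit that the three simplicial profinite sets are already linked by the natural $\cGG_{g,n}$-equivariant continuous surjections $\ccC(S_{g,n})_\bt\tura L(\hP_{g,n})_\bt\tura L'(\hP_{g,n})_\bt$, so that it is enough to prove their composition $\Phi_\bt$ is an isomorphism: once $\Phi_\bt$ is a continuous bijection, the first surjection is forced to be injective (hence an isomorphism of profinite sets) and the second is then a composite of isomorphisms. Since $\Phi_\bt$ is already surjective and a continuous bijection of profinite sets is automatically a homeomorphism, the whole statement reduces to the \emph{injectivity} of $\Phi_\bt$ in each degree $k$. I would first record that, because the geometric $\cGG_{g,n}$-action on each profinite set has open orbits, compactness forces $\ccC(S_{g,n})_k$ and $L'(\hP_{g,n})_k$ to decompose into finitely many clopen orbits, each of which meets the dense image of the discrete set $C(S_{g,n})_k$. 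Thus it suffices to compare the two $\cGG_{g,n}$-sets orbit by orbit through the discrete simplices they contain.

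The core is the comparison of stabilizers, and here the work has already been done in Theorem~\ref{stab pro-curves}. Fix a discrete simplex $\sg\in C(S_{g,n})_k$, with image in both targets. On the source side, Proposition~\ref{stab=clos} applied to the trivial level $\GG^\ld=\GG_{g,n}$ (so that $\ol{\GG}^\ld=\cGG_{g,n}$) identifies the stabilizer of $\sg$ in $\ccC(S_{g,n})_\bt$ with the closure $\ol{\GG}_\sg$ of the discrete stabilizer $\GG_\sg$ in $\cGG_{g,n}$. On the target side, the stabilizer of the image of $\sg$ in $L'(\hP_{g,n})_\bt$ is the group $\cGG_\sg$ described by the two exact sequences of Theorem~\ref{stab pro-curves}. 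I would then observe that these sequences are precisely the closures in $\cGG_{g,n}$ of the corresponding discrete exact sequences for $\GG_\sg$ from Section~\ref{simplicial}: the quotient $\Sigma_\sg\{\pm\}$ is unchanged, the free abelian kernel $\bigoplus\Z\cdot\tau_{\gm_i}$ is replaced by its closure $\bigoplus\ZZ\cdot\tau_{\gm_i}$, and each discrete factor $\GG_{g_i,n_i}$ by its geometric profinite completion $\cGG_{g_i,n_i}$. Since $\ol{\GG}_\sg\subseteq\cGG_\sg$ always holds, comparing the two systems (the inclusion is surjective on kernel, quotient, and each factor by density) yields $\cGG_\sg=\ol{\GG}_\sg$. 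Consequently $\Phi_\bt$ carries the orbit $\cGG_{g,n}/\ol{\GG}_\sg$ of $\sg$ bijectively onto the orbit $\cGG_{g,n}/\cGG_\sg$ of its image.

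It remains to check that distinct $\cGG_{g,n}$-orbits are not identified in the target; equivalently, that two discrete simplices $\sg_1,\sg_2\in C(S_{g,n})_k$ in distinct $\GG_{g,n}$-orbits cannot be carried one onto the other by an element of $\cGG_{g,n}$ acting on $L'(\hP_{g,n})_\bt$. For this I would realize $\sg_1$ and $\sg_2$ by degenerate markings, i.e. by points of $\Im\,\ol{T}_{g,n}\subset\tcM$ whose contracted curves are exactly $\sg_1$ and $\sg_2$, and invoke the geometric principle established just before Lemma~\ref{closure}: an element of $\hGG_{g,n}$ sending a point of $\Im\,\ol{T}_{g,n}$ to another such point is the image of an element of the discrete group $\GG_{g,n}$. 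Lifting a putative $g\in\cGG_{g,n}$ with $g\cdot\sg_1=\sg_2$ to $\hGG_{g,n}$, this principle forces $g$ to come from $\GG_{g,n}$, so $\sg_1$ and $\sg_2$ already lie in the same $\GG_{g,n}$-orbit, a contradiction. Together with the previous paragraph this makes $\Phi_\bt$ a bijection in every degree, hence a degreewise homeomorphism by compactness and therefore an isomorphism of simplicial profinite sets, the two intermediate maps being isomorphisms as well. The genuine obstacle in this circle of ideas is not the present assembly but the stabilizer computation of Theorem~\ref{stab pro-curves}, and within it the identification $\ker\check{\rho}_\gm=\ol{\langle\tau_\gm\rangle}$ for a single s.c.c.\ --- the fact that the profinite stabilizer acquires no symmetries beyond the closure of the discrete one; everything above is the formal harvesting of that fact.
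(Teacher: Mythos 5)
Your overall architecture is the paper's: reduce to showing that the composite $\Phi_\bt$ is a continuous bijection, match $\cGG_{g,n}$-orbits through the discrete simplices they contain, and then match stabilizers by combining Proposition~\ref{stab=clos} (the stabilizer in $\ccC(S_{g,n})_\bt$ is the closure $\ol{\GG}_\sg$) with the explicit description of $\cGG_\sg$ in Theorem~\ref{stab pro-curves}; your five-lemma-style comparison of the two exact sequences is a correct and slightly more explicit rendering of what the paper leaves implicit. The stabilizer half of your argument is therefore sound.

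The genuine gap is in your last step, the injectivity of $\Phi_\bt$ on orbit sets. You claim that a lift $\tilde{g}\in\hGG_{g,n}$ of an element $g\in\cGG_{g,n}$ with $g\cdot\sg_1=\sg_2$ sends a point of $\Im\,\ol{T}_{g,n}\subset\tcM$ realizing $\sg_1$ to a point of $\Im\,\ol{T}_{g,n}$ realizing $\sg_2$, and hence, by the principle stated before Lemma~\ref{closure}, that $g$ comes from $\GG_{g,n}$. Neither assertion holds: the subspace $\Im\,\ol{T}_{g,n}$ is invariant only under $\GG_{g,n}$, not under $\hGG_{g,n}$, so $\tilde{g}(x_1)$ lies in the fiber of $\tcM\ra\ccM_{g,n}$ over the right boundary point but in general outside $\Im\,\ol{T}_{g,n}$; and the conclusion ``$g\in\GG_{g,n}$'' is false already for $\sg_1=\sg_2$, where $g$ may be any element of the stabilizer $\cGG_{\sg_1}$, a profinite group vastly larger than $\GG_{\sg_1}$. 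The argument the paper uses instead is both weaker and sufficient: any $f\in\hGG_{g,n}$ induces an isomorphism of stable curves $\ol{\cC}_x\sr{\sim}{\ra}\ol{\cC}_{f(x)}$, so the topological type of a simplex (equivalently, of the complement $S_{g,n}\ssm\sg$) is a $\cGG_{g,n}$-invariant; since two discrete simplices with complements of the same topological type are already $\GG_{g,n}$-conjugate by the classical change-of-coordinates principle, distinct discrete $\GG_{g,n}$-orbits stay distinct in $L'(\hP_{g,n})_\bt$. Replacing your final paragraph with this invariance argument closes the gap; everything else stands.
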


\begin{proof}
It is clearly enough to prove that the composition of the two above maps, which we call  $\Phi_\bt$,
is an isomorphism. Since, in a simplicial complex, simplices
are determined by their faces, it is enough to prove that $\Phi_0$ is injective.

In order to prove the claim, we will show that the actions of $\cGG_{g,n}$ on   
$\ccC(S_{g,n})_0$ and on $L'(\hP_{g,n})_0$ have same orbits and same stabilizers. 
Before we proceed further, let us give a geometric description of the natural faithful
representations, introduced in Section~\ref{completions}:
$$\check{\rho}_{g,n}\co\cGG_{g,n}\hookra\out(\hP_{g,n})\hspace{1cm}\mbox{and}\hspace{1cm}
\tilde{\rho}_{g,n}\co\cGG_{g,n+1}\hookra\aut(\hP_{g,n}).$$

We are going to interpret these representations as monodromy representations associated
to some profinite coverings of the universal curve over the moduli stack of stable curves
(for the general theory of these spaces, see \cite{V-W}).

Let $\tcM$ (respectively, $\tcC$) be the inverse limit of all representable geometric 
level structures over $\ccM_{g,n}$ (respectively, over $\ccM_{g,n+1}$).
Let also $\ol{\cC}:=\ccM_{g,n+1}\times_{\ccM_{g,n}}\tcM$ be the pull-back of the universal 
proper curve. They are connected by the commutative diagram:
$$\begin{array}{ccc}
\tcC&&\\
\da&\searrow&\\
\ol{\cC}&\ra&\!\!\!\!\!\!\tcM\\
\da&{\sst\square}&\!\!\!\!\!\!\da\\
\,\,\,\,\ccM_{g,n+1}\!\!\!\!\!&\ra&\ccM_{g,n}.
\end{array}$$

Let us observe that, by Definitions~\ref{finite complex} and \ref{geocompletion}, 
the nerve of the D--M boundary $\dd\tcM$ of the profinite covering $\tcM$ is 
described by the procongruence curve complex $\ccC(S_{g,n})$.

The procongruence Teichm\"uller group $\cGG_{g,n}$ identifies
with the deck transformation groups of the profinite Galois coverings $\tcM\ra\ccM_{g,n}$ 
and $\ol{\cC}\ra\ccM_{g,n+1}$, while $\cGG_{g,n+1}$ identifies with the deck 
transformation group of the profinite Galois covering $\tcC\ra\ccM_{g,n+1}$. 

Moreover, the kernel $\hP_{g,n}$ of the natural epimorphism 
$\cGG_{g,n+1}\tura\cGG_{g,n}$ consists of the deck transformations of the covering 
$\tcC\ra\ccM_{g,n+1}$ which preserve the 
fibers of the natural morphism $\tcC\ra\tcM$. In this way, the profinite group $\hP_{g,n}$ is 
canonically identified, modulo inner automorphisms, with the logarithmic 
algebraic fundamental groups of the fibers of the morphism $\ol{\cC}\ra\tcM$, with respect 
to the logarithmic structure associated to the divisor of marked and singular points. 

A good reference for the general theory of algebraic fundamental groups of log schemes
is Hoshi's paper \cite{H}. However, these groups, at least for a stable marked complex 
projective curve $C$, can be described in a more elementary fashion.

Let $\hat{C}$ be the real oriented blow-up of $C$ along the divisor of marked and singular points. 
Then, $\hat{C}$ is a compact orientable Riemann surface with boundary and the blow-up map 
$\hat{C}\ra C$ contracts a set of disjoint non-peripheral s.c.c.'s on $\hat{C}$ onto the nodes of $C$ 
and its boundary components onto the marked points of $C$.

Let $C^{log}$ be the curve $C$ endowed with the logarithmic structure associated to 
the divisor of marked and singular points.
A morphism $X^{log}\ra C^{log}$ of logarithmic schemes is log \'etale if and only if the underlying
morphism of schemes $X\ra C$ pulls back to a topological covering
map $X\times_C\hat{C}\ra\hat{C}$ via the blow-up map $\hat{C}\ra C$. 

The algebraic fundamental 
group $\hat{\pi}_1(C^{log})$ then classifies finite coverings of $C^{log}$ with the above property and
therefore it is naturally isomorphic to the profinite completion of the topological fundamental 
group of the Riemann surface $\hat{C}$.

The representation $\tilde{\rho}_{g,n}\co\cGG_{g,n+1}\ra\aut(\hP_{g,n})$ can be described 
as follows. For $x\in\tcM$, let $\tcC_x$ and $\ol{\cC}_x$ be, respectively,  the fibers of the 
morphisms $\tcC\ra\tcM$ and $\ol{\cC}\ra\tcM$. For all $x\in\tcM$, the profinite group 
$\hP_{g,n}$ identifies with the deck transformation group of the profinite covering 
$\tcC_x\ra\ol{\cC}_x$.
Let us denote by $\ol{f}$ the image of $f\in\cGG_{g,n+1}$ via the natural epimorphism 
$\cGG_{g,n+1}\ra\cGG_{g,n}$. The element $f\in\cGG_{g,n+1}$ induces, by restriction, an 
isomorphism of profinite curves $f\co\tcC_x\ra\tcC_{\ol{f}(x)}$. The automorphism 
$\tilde{\rho}_{g,n}(f)\co\hP_{g,n}\ra\hP_{g,n}$ is then defined, for $\alpha\in\hP_{g,n}$, by  
the assignment $\tilde{\rho}_{g,n}(f)(\alpha)=f^{-1}\alpha f$.

The profinite universal monodromy representation 
$\check{\rho}_{g,n}\co\cGG_{g,n}\ra\out(\hP_{g,n})$ is the outer representation associated 
to $\tilde{\rho}_{g,n}$. However, in the present geometric context, there is a more direct 
way to describe $\check{\rho}_{g,n}$. 

The profinite group $\cGG_{g,n}$ identifies with the deck 
transformation group of the profinite covering $\ol{\cC}\ra\ccM_{g,n+1}$. 
Let us denote by $\ol{\cC}_x^{log}$ and $\ol{\cC}^{log}_{f(x)}$ the fibers of the curve 
$\ol{\cC}\ra\tcM$ endowed with the logarithmic structure associated to the divisor of 
marked and singular points. An element $f\in\cGG_{g,n}$ determines by restriction an 
isomorphism of logarithmic algebraic curves $f\co\ol{\cC}_x^{log}\ra\ol{\cC}^{log}_{f(x)}$ 
and then of logarithmic algebraic 
fundamental groups $f_\natural\co\hat{\pi}_1(\ol{\cC}_x^{log})\ra\hat{\pi}_1(\ol{\cC}^{log}_{f(x)})$. 
These groups identify, modulo inner automorphisms, with the profinite group
$\hP_{g,n}$. Therefore, the isomorphism $f_\natural$ determines an element of 
$\out(\hP_{g,n})$.

The discrete universal monodromy representation $\rho\co\GG_{g,n}\ra\out(\Pi_{g,n})$
can be described in a similar way, where the Bers bordifications $\ol{T}_{g,n}$ and 
$\ol{T}_{g,n+1}$ and the universal Teichm\"uller curve $\ol{\mathfrak C}\ra\ol{T}_{g,n}$ 
play the same role of the profinite spaces $\tcM$, $\tcC$ and $\ol{\cC}$.

There are natural injective maps $\ol{T}_{g,n}\hookra\tcM$, $\ol{T}_{g,n+1}\hookra\tcC$ 
and $\ol{\mathfrak C}\hookra\ol{\cC}$. For $x\in\tcM$ in the image of $\ol{T}_{g,n}$,
the fiber $\ol{\cC}_x$ is contained in the image of $\ol{\mathfrak C}$ and the 
logarithmic topological fundamental group of $\ol{\cC}_x$ (here, by this, we just mean the 
topological fundamental group of the real oriented blow-up of $\ol{\cC}_x$ along its marked 
and singular points) identifies, modulo inner automorphisms, with $\Pi_{g,n}$. 

For a given s.c.c. $\gm\in\cL'$, 
let $(C_\gm,\psi)\in\dd T_{g,n}$ be such that $C_\gm$ is a stable $n$-pointed, genus $g$
curve with a single node and the degenerate marking 
$\psi\co S_{g,n}\ra C_\gm$ sends $\gm$ to the node. Let then $x\in\tcM$ be the 
image of the point $(C_\gm,\psi)$ via the natural embedding $\ol{T}_{g,n}\hookra\tcM$ and 
identify the stable curve $C_\gm$ with the fiber $\ol{\cC}_x$. 

Let $\tilde{C}_\gm$ be the inverse limit of all logarithmic \'etale coverings $\{C_\gm^\ld\}_{\ld\in\Ld}$ of 
$C_\gm$, with respect to the logarithmic structure associated to the divisor of marked and 
singular points. Let $\{R^\ld\subset C_\gm^\ld\}_{\ld\in\Ld}$ be an inverse system of irreducible 
components. Then, we call the inverse limit $\ilim_\ld\,R^\ld$ an {\it irreducible component of} 
$\tilde{C}_\gm$. Similarly, for $\{n^\ld\subset C_\gm^\ld\}_{\ld\in\Ld}$ an inverse system of nodes,  
we call $\ilim_\ld\,n^\ld$ a {\it node of} $\tilde{C}_\gm$.
For all $\ld$, the curve $C_\gm^\ld$ is semi-stable. In particular, each of its nodes is contained in 
exactly two irreducible components. Then, the same property holds for the
inverse limit $\tilde{C}_\gm$.

The curve $\tilde{C}_\gm$ identifies with the fiber $\tcC_x$ of the curve 
$\tcC\ra\tcM$. In this way, the group of deck transformations of the covering 
$\tilde{C}_\gm\ra C_\gm$ identifies with the profinite group $\hP_{g,n}$. 

From the above description, it is clear that two discrete s.c.c. $\gm,\gm'\in\cL'\subset\hL'$
are in the same $\cGG_{g,n}$-orbit if and only if the associated stable curves $C_\gm$ and 
$C_{\gm'}$ have the same topological type. This already implies that the natural 
continuous $\cGG_{g,n}$-equivariant map $\Phi_0\co\check{C}(S_{g,n})_0\ra L'(\hP_{g,n})_0$ 
induces a bijection between the $\cGG_{g,n}$-orbits of the two sets.

It remains to prove that $\Phi_0$ induces an isomorphism between the respective stabilizers.
A $\cGG_{g,n}$-orbit in $\ccC(S_{g,n})_0$ contains simplices in the image of the discrete complex
$C(S_{g,n})_0$. Hence, it is enough to show that, for a given s.c.c. $\gm$ on the Riemann surface 
$S_{g,n}$, the stabilizer $\cGG_\gm$ of its image in $\ccC(S_{g,n})_0$ equals the stabilizer 
$\cGG_{\Phi_0(\gm)}$ of its image in $\hL'$.

We are going to make use of the description of the simplicial profinite set $\check{C}(S_{g,n})_\bt$
in terms of framed, marked, graph decompositions of the homology of coverings of $S_{g,n}$
given in Section~7 of \cite{sym}. Let us briefly recall the construction given there, while, for more
details and precise definitions, we refer to Sections~6 and 7 of  \cite{sym}.

Let $K$ be a characteristic finite index subgroup of $\Pi_{g,n}$, with associated covering 
$p_K\co S_K\ra S_{g,n}$ and covering transformation group $G_K$. Let $n_K$ be the 
number of punctures on the Riemann surface $S_K$.

In Section~6 of  \cite{sym}, it is described the nerve of the D--M boundary
of the abelian level structure $\cM(S_K)^{(m)}$ over the moduli stack $\cM(S_K)$, which parametrizes 
complex curves homeomorphic to $S_K$. 

A $k$-simplex $\sg=\{\gm_e\}_{e\in E}$ of the curve complex $C(S_K)$ determines a stratum 
$\delta_\sg^{(m)}$ of codimension $k+1$ in the D--M boundary of $\ccM(S_K)^{(m)}$. 
The s.c.c.'s in $\sg$ determine a partition $\amalg_{v\in V} S_v:=S_K\ssm\sg$ of the Riemann surface 
$S_K$ in open connected subsurfaces $S_v$ of negative Euler characteristic. 

In its turn, this partition determines what we call a framed, $n_K$-marked, graph decompositions of 
the homology group $H_1(\ol{S}_K,\Z/m)$ (cf. Definitions~6.1 and 6.5 \cite{sym}). This is the graph 
$(D,Y_\sg)$ of abelian subgroups of $H_1(\ol{S}_K,\Z/m)$ whose vertex groups $D_v$, for $v\in V$,  
consist of the images of the natural homomorphisms $H_1(S_v,\Z/2)\ra H_1(\ol{S}_K,\Z/m)$ and 
whose edge groups $D_e$, for $e\in E$, are the cyclic subgroups of $H_1(\ol{S}_K,\Z/m)$ determined 
by the s.c.c.'s $\gm_e\in\sg$. Let us observe that the graph $Y_\sg$ is just the dual graph of the stable 
degeneration of the Riemann surface $S_K$ whose vanishing cycles are the s.c.c.'s in $\sg$.

Let $\cP_{n_K}$ be the set of subsets of the index set $\{1,\ldots,n_K\}$.
The marking $d\co V\ra\cP_{n_K}$ is then defined assigning to a vertex $v$ of the graph
$Y_\sg$ the indices of the punctures lying on the corresponding subsurface $S_v$ of $S_K$.
The frame epimorphism $\mu_e\co\Z\ra D_e$, for an edge $e\in E$ of the graph $Y_\sg$, is 
the natural epimorphism between the two cyclic subgroups associated to the 
s.c.c. $\gm_e$ in the homology groups $H_1(\ol{S}_K,\Z)$ and $H_1(\ol{S}_K,\Z/m)$, respectively.

By Theorem~6.7 \cite{sym}, the framed, $n_K$-marked, graph decomposition
$(D,Y_\sg,d,\{\mu_e\})$, modulo a natural equivalence relation, determines the stratum 
$\delta_\sg^{(m)}$, for any $m\geq 2$.

A simplex $\sg\in C(S_{g,n})$, via the covering $p_K\co S_K\ra S_{g,n}$, determines the simplex 
$\sg':=p_K^{-1}(\sg)$ in the curve complex $C(S_K)$. 
By the above construction, we then associate to $\sg$ the equivalence class
$[D,Y_{\sg'},d,\{\mu_e\}]$ of the framed, $n_K$-marked, graph decomposition of $H_1(\ol{S}_K,\Z/m)$
determined by the simplex $\sg'$. This decomposition has the further property of admitting a natural 
action of the covering transformation group $G_K$. 

In Section~7 \cite{sym}, for $m\geq 2$, it is defined the simplicial finite set
$\cG^{n_K}_{G_K}(H_1(\ol{S}_K,\Z/m))_\bt$ of framed, $n_K$-marked, graph decompositions of the 
homology $H_1(\ol{S}_K,\Z/m)$, endowed with a natural $G_K$-action. As observed in iv.),
Remark~7.6 \cite{sym}, if the subgroup $K$ satisfies
the hypotheses of Theorem~3.9 \cite{sym}, the natural map, defined above,
$$\mathfrak{g}_{K,(m)}\co C(S_{g,n})_\bt\ra\cG^{n_K}_{G_K}(H_1(\ol{S}_K,\Z/m))_\bt$$
is $\GG_{g,n}$-equivariant and factors through a continuous $\cGG_{g,n}$-equivariant map:
$$\check{\mathfrak{g}}_{K,(m)}\co\check{C}(S_{g,n})_\bt\ra
\cG^{n_K}_{G_K}(H_1(\ol{S}_K,\Z/m))_\bt.$$

By Corollary~7.8 \cite{sym}, if $\{K\}$ is a cofinal system of finite index subgroups of $\Pi_{g,n}$ 
satisfying the hypotheses of Theorem~3.9 \cite{sym} and $m$ an integer $\geq 2$, there is
a natural continuous, injective, $\cGG_{g,n}$-equivariant map of simplicial profinite sets:
$$\prod_{K}\check{\mathfrak{g}}_{K,(m)}\co\check{C}(S_{g,n})_\bt\hookra
\prod_{K}\cG^{n_K}_{G_K}(H_1(\ol{S}_K,\Z/m))_\bt.$$
The existence of a cofinal system with these properties is then ensured by Lemma~3.10 \cite{sym}.

As observed in Remark~6.8 \cite{sym}, with $\Z/2$-coefficients, all framings of the edge groups 
are equivalent, i.e. a simplex of $\cG^{n_K}_{G_K}(H_1(\ol{S}_K,\Z/2))_\bt$ is already determined by 
the equivalence class of the corresponding $n_K$-marked, graph decomposition. 


So, let us describe the map $\check{\mathfrak{g}}_{K,(2)}$ for
a given s.c.c. $\gm$ on the Riemann surface $S_{g,n}$. The s.c.c. $\gm$ determines a partition 
$\amalg_{v\in V} S_v:=S_K\ssm p_K^{-1}(\gm)$ of the Riemann surface $S_K$ and this determines,
as above, an $n_K$-marked, graph $(D^K,Y^K,d^K)$ of abelian subgroups of 
$H_1(\ol{S}_K,\Z/2)$. This comes with a natural $G_K$-action, 
induced by the deck transformations of the covering map $p_K\co S_K\ra S_{g,n}$.

The map $\check{\mathfrak{g}}_{K,(2)}$ then associates to the $0$-simplex of 
$\check{C}(S_{g,n})_\bt$ associated to the s.c.c. $\gm$ on $S_{g,n}$
the corresponding equivalence class $[D^K,Y^K,d^K]$ of $n_K$-marked, graph 
decomposition of the homology group $H_1(\ol{S}_K,\Z/2)$, endowed
with a natural $G_K$-action. 

By Lemma~3.10 \cite{sym}, we can assume that all elements in the inverse system $\{K\}$ satisfy the 
hypotheses of Theorem~3.9 \cite{sym}, i.e. they are such that the inverse image 
$p_K^{-1}(\gm)$ contains no separating curves or cut pairs. So that the cyclic subgroups of the 
homology group $H_1(\ol{S}_K,\Z/2)$ determined by the s.c.c.'s contained in $p_K^{-1}(\gm)$ 
are all distinct and non-trivial. 

In order to prove that $\Phi_0$ induces an isomorphism between 
the respective stabilizers of $0$-simplices, we then have to show 
that the stabilizer $\cGG_{\Phi_0(\gm)}$ preserves the equivalence class $[D^K,Y^K,d^K]$, 
associated to the s.c.c. $\gm$, for a cofinal system of characteristic finite index subgroups $\{K\}$ 
of $\Pi_{g,n}$ satisfying the hypotheses of Theorem~3.9 \cite{sym}. 

Let us consider again the covering $\tilde{C}_\gm\ra C_\gm$ whose deck transformation group
was identified with the profinite group $\hP_{g,n}$ via the degenerate marking 
$\psi\co S_{g,n}\ra C_\gm$.

Let $\gm_\ast$ be an element of the fundamental group $\Pi_{g,n}$ which generates a cyclic 
subgroup in the equivalence class of the given $\gm\in\cL'$.
Elements of $\hP_{g,n}$ in the conjugacy class of $\gm_\ast$ are in natural bijective 
correspondence with the set $\cN_\gm$ of nodes of $\tilde{C}_\gm$. Let then 
$\{\gm_s\}_{s\in\cN_\gm}$ be the set of conjugates of $\gm_\ast$ in the profinite group $\hP_{g,n}$.

The stabilizer of the node $s\in\cN_\gm$ for the action by $\hP_{g,n}$ is the closed cyclic group $I_s$ 
generated by $\gm_s$. Moreover, the action of $I_s$ on the curve $\tilde{C}_\gm$ has
only the node $s$ as fixed point. Therefore, the subgroups $\{I_s\}_{s\in c_\gm}$ are their own 
normalizers in $\hP_{g,n}$. The same argument clearly applies to the stabilizers of marked points 
of $\tilde{C}_\gm$. In particular, it holds:

\begin{lemma}\label{central}Let $1\neq x\in\Pi_{g,n}$, for $2g-2+n>0$, be representable by
a s.c.c. on $S_{g,n}$. Then, the closed subgroup generated by $x^m$ in 
$\hat{\Pi}_{g,n}$, for $m\in\ZZ\ssm\{0\}$, has for normalizer the closed subgroup 
generated by $x$. Moreover, for $y\in\hP_{g,n}\ssm\ol{\langle x\rangle}$, it holds 
 $\ol{\langle x\rangle}\cap y\ol{\langle x\rangle}y^{-1}=\{1\}$.
\end{lemma}

Let us now assume that the s.c.c. $\gm$ is non-separating, the other case can be treated similarly.
Let $\dot{C}_\gm$ be the smooth curve obtained removing the singular point from $C_\gm$. 
Then, $\dot{C}_\gm$ has just one connected component. For suitable choices of base points, the 
fundamental group of the subsurface $\psi^{-1}(\dot{C}_\gm)$ of $S_{g,n}$ can be identified with a 
subgroup $\Pi_\gm$ of the fundamental group $\Pi_{g,n}$ which contains $\gm_\ast$.

The profinite completion $\hP_\gm$ of $\Pi_\gm$ is identified with 
the deck transformation group of the restriction of the covering $\tilde{C}_\gm\ra C_\gm$ 
to one of the two irreducible components of $\tilde{C}_\gm$ which contain the node $n_\ast$
corresponding to $\gm_\ast$. There is then a bijective correspondence between 
irreducible components of $\tilde{C}_\gm$ and conjugates of the subgroup $\hP_\gm$ 
in $\hP_{g,n}$. In particular, the subgroup $\hP_\gm$ is its own normalizer in $\hP_{g,n}$. 
Let us denote by $\hP_\gm'$ the conjugate of $\hP_\gm$ corresponding 
to the other irreducible component of $\tilde{C}_\gm$ which also contains the node $n_\ast$.

Let $f\in\cGG_{g,n}$ be an element which preserves the conjugacy class of the closed 
subgroup $\ol{\langle\gm_\ast\rangle}$ of $\hP_{g,n}$ generated by $\gm_\ast$ and let 
$\tilde{f}\in\aut(\hP_{g,n})$ be a lift of $f$ such that it holds
$\tilde{f}(\ol{\langle\gm_\ast\rangle})=\ol{\langle\gm_\ast\rangle}$. The isomorphism between  
fibers $\tilde{f}\co\tcC_x\ra\tcC_{\tilde{f}(x)}$, induced by $\tilde{f}$, then sends the irreducible 
component of $\tcC_x$ stabilized by the subgroup $\hP_\gm$ of $\hP_{g,n}$ to 
the irreducible component of $\tcC_{\tilde{f}(x)}$ stabilized by either $\hP_\gm$ or $\hP_\gm'$.
This means that $\tilde{f}$ either preserves both subgroups $\hP_\gm$ and $\hP_\gm'$ or 
swaps them.

Let $\hat{K}$ be the profinite completion of a subgroup $K$ of $\Pi_{g,n}$ satisfying the hypotheses 
of Theorem~3.9 \cite{sym}. The conjugacy class of the closed subgroup 
$\hat{K}\cap\ol{\langle\gm_\ast\rangle}$ of $\hP_{g,n}$ is 
mapped by the series of natural epimorphisms 
$H_1(\hat{K})\cong H_1(S_K)\tura H_1(\ol{S}_K,\Z/2)$ onto the $G_K$-orbit of the edge groups
of $(D^K,Y^K,d^K)$, while the conjugacy class of the closed subgroups $\hat{K}\cap\hP_\gm$ 
and $\hat{K}\cap\hP_\gm'$ is mapped onto the $G_K$-orbit of the vertex groups of $(D^K,Y^K,d^K)$. 

It follows that the given $f\in\cGG_{\Phi_0(\gm)}$ preserves vertex and edge groups of the 
graph of groups $(D^K,Y^K)$. Under the current hypotheses, the equivalence class $[D^K,Y^K]$ 
of $(D^K,Y^K)$ is determined by its vertex and edge groups. Therefore, the action of $f$ 
preserves at least the set of simplices of $\cG^{n_K}_{G_K}(H_1(\ol{S}_K,\Z/2))_\bt$ whose 
underlying graph decompositions are equivalent to the graph of groups $(D^K,Y^K)$. 

On the other hand, the $G_K$-orbit of markings for the vertices of $Y^K$ is obviously preserved. 
Thus, in conclusion, the equivalence class $[D^K,Y^K,d^K]$ of $(D^K,Y^K,d^K)$ is preserved by the 
action of $f$. From Corollary~7.8 in \cite{sym}, it then follows that $f$ preserves the $0$-simplex of 
$\ccC(S_{g,n})_0$ determined by $\gm$. This concludes the proof of Theorem~\ref{curve-complex}.

\end{proof}

Thanks to Theorem~\ref{curve-complex}, the procongruence curve complex $\ccC(S_{g,n})$ 
is realized, more concretely, in terms of profinite s.c.c.'s on $S_{g,n}$. From now on, we will 
identify the procongruence curve complex $\ccC(S_{g,n})$ with the complex of profinite 
curves $L(\hP_{g,n})$.

The orbits of the complex of profinite curves $L(\hP_{g,n})$ for the action of the procongruence 
Teichm\"uller group $\cGG_{g,n}$ are the closures of the images of the orbits of the discrete curve 
complex $C(S_{g,n})$ for the action of the Teichm\"uller group $\GG_{g,n}$ and, by the proof of
Theorem~\ref{curve-complex}, they are all distinct. The following notion is then well defined:

\begin{definition}\label{topological type}For $2g-2+n>0$, let $\sg\in L(\hP_{g,n})$. {\it The 
topological type} of $\sg$ is the homeomorphism type of the Riemann surface $S_{g,n}\ssm\sg'$,
where $\sg'\in C(S_{g,n})$ is such that its image in $L(\hP_{g,n})$ is in the
$\cGG_{g,n}$-orbit of $\sg$.
\end{definition}

Let us now describe the stabilizers for the action of $\cGG_{g,n}$ on the complex of profinite curves 
$L(\hP_{g,n})$. Again, it is enough to describe the
stabilizers of simplices in the image of the discrete curve complex.
Given a simplex $\sg\in C(S_{g,n})$, let us denote also by $\sg$ its image in $L(\hP_{g,n})$ 
and by $\cGG_\sg$ the corresponding $\cGG_{g,n}$-stabilizer. 

\begin{theorem}\label{stab pro-curves}Let $\sg\in L(\hP_{g,n})$, for $2g-2+n>0$, be the image 
of a simplex of $C(S_{g,n})$ determined by the set $\{\gm_0,\ldots,\gm_k\}$ of s.c.c. on the 
Riemann surface $S_{g,n}$. Suppose that 
$$S_{g,n}\ssm\{\gm_0,\ldots,\gm_k\}\cong S_{g_1,n_1}\amalg\ldots\amalg S_{g_h,n_h}$$ 
and let $\Sigma_{\sg^{\pm}}$ be the group of permutations on the set of oriented s.c.c.'s
$\{\vec{\gm}_0^\pm,\ldots,\vec{\gm}_k^\pm\}$.
Then, the stabilizer $\cGG_\sg$ of $\sg$ in $L(\hP_{g,n})$ for the action of
$\cGG_{g,n}$ fits into the two exact sequences:
$$\begin{array}{c}
1\ra\cGG_{\vec{\sg}}\ra\cGG_\sg\ra\Sigma_{\sg^{\pm}},\\
1\ra\bigoplus\limits_{i=0}^k\ZZ\cdot\tau_{\gm_i}\ra\cGG_{\vec{\sg}}
\ra\cGG_{g_1,n_1}\times\dots\times\cGG_{g_h,n_h}\ra 1.
\end{array}$$
\end{theorem}

By Proposition~\ref{stab=clos} and Theorem~\ref{curve-complex}, for a simplex $\sg\in C(S_{g,n})$, 
the stabilizer $\cGG_\sg$ of its image in $L(\hP_{g,n})$ for the action of $\cGG_{g,n}$ is the closure 
$\ol{\GG}_\sg$ in the profinite group $\cGG_{g,n}$ of the discrete stabilizer $\GG_\sg$. 

By $ii.)$ Theorem~\ref{comparison}, injectivety of the second map from the left in the bottom 
sequence of Theorem~\ref{stab pro-curves} is equivalent to the claim that Looijenga levels induce 
the profinite topology on the abelian subgroup $\bigoplus_{i=0}^k\Z\cdot\tau_{\gm_i}$ of $\GG_{g,n}$. 

This claim then follows from the explicit description of the local monodromy 
representation for Looijenga level structures, for $K$ satisfying the hypotheses of 
Lemma~ 3.10 \cite{sym} and all $\ell, m\geq 2$ (cf. Proposition~3.11 \cite{sym}), which implies:
$$\GG^{K_\ell,(m)}\cap\bigoplus\limits_{i=0}^k\Z\cdot\tau_{\gm_i}
\leq\bigoplus\limits_{i=0}^k m\,\Z\cdot\tau_{\gm_i}.$$

By Proposition~6.6 \cite{PFT}, there is a natural representation:
$$\check{\rho}_\sg\co\cGG_{\vec{\sg}}\ra\cGG_{g_1,n_1}\times\dots\times\cGG_{g_h,n_h}.$$
Thus, right exactness of the bottom sequence of Theorem~\ref{stab pro-curves} follows if we 
show that it holds $\ker\,\check{\rho}_\sg=\bigoplus_{i=0}^k\ZZ\cdot\tau_{\gm_i}$.

Proceeding by induction on the rank of the simplex, it is not difficult to reduce to the case of the 
$0$-simplex determined by a s.c.c. $\gm$ on $S_{g,n}$. So, we are reduced to prove the
following lemma:

\begin{lemma}\label{closure}Let $\gm$ be a s.c.c. on $S_{g,n}$ and let $\cGG_{\vec{\gm}}$ 
be the subgroup of $\cGG_{g,n}$ consisting of elements which fix the s.c.c. $\gm$ and an 
orientation on it. It holds:
\begin{enumerate}
\item If $\gm$ is non-separating, the profinite monodromy representation $\check{\rho}_{g,n}$ 
induces a representation $\check{\rho}_\gm\co\cGG_{\vec{\gm}}\ra\out(\hP_{g-1,n+2})$, whose
kernel is topologically generated by the Dehn twist $\tau_\gm$.
\item If $\gm$ is separating and $S_{g,n}\ssm\gm\cong S_{g_1,n_1+1}\amalg S_{g_2,n_2+1}$,
the profinite monodromy representation $\check{\rho}_{g,n}$ induces a representation 
$\check{\rho}_\gm\co\cGG_{\vec{\gm}}\ra\out(\hP_{g_1,n_1+1})\times\out(\hP_{g_2,n_2+1})$,
whose kernel is topologically generated by the Dehn twist $\tau_\gm$.
\end{enumerate}
\end{lemma}

\begin{proof}Let us assume that $\gm$ is a non-separating s.c.c. on $S_{g,n}$. The case of a separating curve can be treated similarly.
Let us give to the group $\Pi_{g,n}$ the standard presentation:
$$\Pi_{g,n}=\langle\alpha_1, \dots \alpha_g,\beta_1,\dots,\beta_g, u_1,\dots,u_n|\;
\prod_{i=1}^g[\alpha_i,\beta_i] \cdot u_n\cdots u_1\rangle,$$ 
where $u_i$, for $i=1,\dots,n$, is a simple loop around the  puncture $P_i$. 

Let us assume that $\beta_g$ is freely isotopic to $\gm$ and put 
$u_{n+1}:=\beta_g^{-1}$ and $u_{n+2}:=\alpha_g\beta_g\alpha_g^{-1}$. For a suitable choice of 
base point, the fundamental group of $S_{g,n}\ssm\gm$ is then identified with the subgroup 
$\Pi_{g-1,n+2}$ of $\Pi_{g,n}$ of presentation:
$$ \Pi_{g-1,n+2}=\langle\alpha_1, \dots \alpha_{g-1},\beta_1,\dots,\beta_{g-1}, 
u_1,\dots,u_{n+2}|\;\prod_{i=1}^{g-1}[\alpha_i,\beta_i] \cdot u_{n+2}\cdots u_1\rangle.$$
 
Let us maintain the notations introduced in the proof of Theorem~\ref{curve-complex}.
Elements of $\hP_{g,n}$ in the conjugacy class of $\beta_g$ are then in natural bijective 
correspondence with the set $\cN_\gm$ of nodes of $\tilde{C}_\gm$. 


For a given $f\in\cGG_{\vec{\gm}}<\cGG_{g,n}$, there is a lift $\tilde{f}\in\aut(\hP_{g,n})$ 
such that $\tilde{f}(\beta_g)=\beta_g$. From Lemma~\ref{central}, it follows that such lift $\tilde{f}$ 
is uniquely determined by $f$, modulo inner automorphisms by powers of $\beta_g$.

By the remarks made in the proof of Theorem~\ref{curve-complex}, it is clear that the isomorphism 
between fibers $\tilde{f}\co\tcC_x\ra\tcC_{\tilde{f}(x)}$, induced by $\tilde{f}$, sends the irreducible 
component of $\tcC_x$ stabilized by the subgroup $\hP_{g-1,n+2}$ of $\hP_{g,n}$ to 
the irreducible component of $\tcC_{\tilde{f}(x)}$ stabilized by 
the same subgroup and hence induces an automorphism 
$\tilde{f}'\co\hP_{g-1,n+2}\sr{\sim}{\ra}\hP_{g-1,n+2}$. We
let then $\check{\rho}_\gm(f)$ be the image of $\tilde{f}'$ in $\out(\hP_{g-1,n+2})$.

In order to complete the proof of the lemma, it is now enough to show that the kernel of 
$\check{\rho}_\gm$ is generated by the Dehn twist $\tau_\gm$. As above, we just treat the 
case of a non-separating s.c.c. $\gm$ on $S_{g,n}$.

Let us keep the above notations and let moreover $t:=\alpha_g$. We then get the standard 
presentation of $\Pi_{g,n}$ as an HNN extension of its free subgroup $\Pi_{g-1,n+2}$:
$$\Pi_{g,n}=\langle t,\Pi_{g-1,n+2}|\; t u_{n+1}^{-1}t^{-1}=u_{n+2}\rangle.$$

In terms of graphs of groups, we are saying that $\Pi_{g,n}$ is naturally isomorphic to the 
fundamental group of the loop of groups having for vertex group $\Pi_{g-1,n+2}$ and for edge 
group the free cyclic group spanned by $t$. 


For a given $f\in\cGG_\gm$, we have seen that it admits a lift $\tilde{f}$ to an automorphism of 
$\hP_{g,n}$ which preserves the closed subgroup $\hP_{g-1,n+2}$ and $\check{\rho}_\gm(f)$ 
is defined to be the outer automorphism induced by $\tilde{f}$ on this subgroup.

Let $f\in\cGG_\gm$ be such that $\check{\rho}_\gm(f)=1$. It then admits a lift 
$\tilde{f}\in\aut(\hP_{g,n})$ which 
restricts to the identity on $\hP_{g-1,n+2}$. In particular, it holds $\tilde{f}(u_{n+1})=u_{n+1}$ 
and $\tilde{f}(u_{n+2})=u_{n+2}$. The action of $\tilde{f}$ hence satisfies the condition:
$$\tilde{f}(u_{n+2})=\tilde{f}(t)u_{n+1}^{-1}\tilde{f}(t)^{-1}=u_{n+2}=t u_{n+1}^{-1}t^{-1}.$$ 
By Lemma~\ref{central}, this identity implies $\tilde{f}(t)=tu_{n+1}^k$, for some $k\in\ZZ$. 
It then follows that $f$ equals the power $\tau_\gm^{-k}\in\cGG_{g,n}$, since the 
power $\tau_\gm^{-k}$ also lifts to an automorphism $\tilde{\tau}_\gm^{-k}$ of $\hP_{g,n}$ 
such that $\tilde{\tau}_\gm^{-k}(t)=t u_{n+1}^{k}$ and which
restricts to the identity on the subgroup $\hP_{g-1,n+2}$.

\end{proof}

\begin{remark}\label{link}Theorem~\ref{curve-complex} and Theorem~\ref{stab pro-curves} yield, 
in particular, a description of the links of simplices in the complex of profinite curves 
$L(\hP_{g,n})$ analogous to the description of links of simplices in the curve complex $C(S_{g,n})$. 
Indeed, with the same notations of Theorem~\ref{stab pro-curves}, the link 
$\mathrm{Lk}(\sg)$ of the simplex $\sg$ in the simplicial profinite complex $L(\hP_{g,n})$ is 
naturally isomorphic to the join of simplicial profinite complexes:
$$L(\hP_{g_1,n_1})\ast\ldots\ast L(\hP_{g_h,n_h}).$$
\end{remark}

\section{Profinite Dehn twists in $\cGG_{g,n}$}\label{profinite twists}
A basic feature of classical Teichm\"uller theory (cf. \S 2, \cite{Birman}) is the fact that 
the set $\cL$ of isotopy classes of non-peripheral s.c.c.'s on $S_{g,n}$ parametrizes the set of Dehn 
twists of $\GG_{g,n}$, which is the standard set of generators for this group. In other words, the 
assignment $\gm\mapsto\tau_\gm$, for $\gm\in\cL$, defines an embedding 
$d\co\cL\hookra\GG_{g,n}$, for $2g-2+n>0$.

The set $\{\tau_\gm\}_{\gm\in\cL}$ of all Dehn twists of $\GG_{g,n}$ is closed
under conjugation and falls in a finite set of conjugacy classes which are in bijective
correspondence with the possible topological types of the Riemann surface $S_{g,n}\ssm\gm$.
So, it is natural to define, for a given profinite completion $\GG_{g,n}'$ of the
Teichm\"uller group $\GG_{g,n}$, the set of {\it profinite Dehn twists} of $\GG'_{g,n}$
to be the closure of the image of the set $\{\tau_\gm\}_{\gm\in\cL}$ inside $\GG'_{g,n}$,
which is the same as the union of the conjugacy classes in $\GG'_{g,n}$ of the images 
of the Dehn twists of $\GG_{g,n}$. 

More generally, there is a natural $\GG_{g,n}$-equivariant map $d_k\co\cL\ra\cGG_{g,n}$, defined 
by the assignment $\gm\mapsto\tau_\gm^k$, where $\GG_{g,n}$ acts by conjugation on 
$\cGG_{g,n}$. From the universal property of the $\cGG_{g,n}$-completion and 
Theorem~\ref{curve-complex}, it then follows that the map $d_k$ extends to a continuous 
$\cGG_{g,n}$-equivariant map $\hat{d}_k\co\hat{\cL}\ra\cGG_{g,n}$, whose image is the set of
$k$-th powers of profinite Dehn twists. 

In particular, a profinite s.c.c. $\gm\in\hat{\cL}$ determines a profinite Dehn twist, 
which we denote by $\tau_\gm$, in the procongruence Teichm\"uller group $\cGG_{g,n}$.
The following theorem says that this provides a parametrization of the set 
of profinite Dehn twists of $\cGG_{g,n}$:

\begin{theorem}\label{parametrize}For $2g-2+n>0$ and any
$k\in\ZZ\ssm{0}$, there is a natural injective map 
$\hat{d}_k\co\hat{\cL}\hookra\cGG_{g,n}$ which assigns to a profinite s.c.c. $\gm\in\hat{\cL}$
the $k$-th power of the profinite Dehn twist  $\tau_\gm$. 
\end{theorem}

In order to prove Theorem~\ref{parametrize}, it is necessary a long preliminary digression. 

 \begin{definition}Let us assume that the base point of the fundamental group $\Pi_{g,n}$ of 
$S_{g,n}$ is the marked point $P_{n+1}$. Let $\cL^\ast_{g,n}$ be the set of relative isotopy 
classes of $P_{n+1}$-pointed oriented non-peripheral s.c.c.'s on $S_{g,n}$. 
This set embeds in the fundamental group $\Pi_{g,n}$ (cf. Theorem~3.4.15 in \cite{C-G-K-Z}).

Let us then define the set of {\it $P_{n+1}$-pointed profinite oriented non-peripheral s.c.c.'s} 
$\hat{\cL}^\ast_{g,n}$ to be the closure of $\cL^\ast_{g,n}$ inside the profinite 
completion $\hP_{g,n}$. The quotients of the sets $\cL^\ast_{g,n}$ and $\hat{\cL}^\ast_{g,n}$ by 
the action of inner automorphisms are, respectively, the set of {\it oriented non-peripheral s.c.c.'s}
$\cL^{or}_{g,n}$ and of {\it profinite oriented non-peripheral s.c.c.'s} $\hL^{or}_{g,n}$.
\end{definition}

\begin{remark}\label{forgetting}There is a natural continuous surjective map 
$\hat{q}\co\hat{\cL}^\ast_{g,n}\tura\hat{\cL}_{g,n}$. We then let $\gm:=\hat{q}(\vec{\gm})$ and say 
that $\vec{\gm}\in\hat{\cL}^\ast_{g,n}$ lifts the profinite s.c.c. $\gm\in\hat{\cL}_{g,n}$.
\end{remark}

For $2g-2+n>0$, there is a natural monomorphism $i\co\Pi_{g,n}\hookra\GG_{g,n+1}$ which, 
for the relative isotopy class of a $P_{n+1}$-pointed oriented s.c.c. $\vec{\gm}$, is described as 
follows. Let $U_\gm$ be a closed tubular neighborhood of $\vec{\gm}$. The boundary of $U_\gm$ 
consists of two s.c.c.'s on $S_{g,n+1}$. Let us denote by $\gm_1$ the boundary component of 
$U_\gm$ which lies at the right of $\vec{\gm}$, with respect to its orientation, and by $\gm_2$ 
the one which lies at its left. The isotopy classes of the pair of s.c.c.'s $\{\gm_1,\gm_2\}$ 
only depend on the class $[\vec{\gm}]$ of $\vec{\gm}$ in $\Pi_{g,n}$.
The monomorphism $i$ then assigns to the class of $\vec{\gm}$ the product of Dehn twists 
$\tau_{\gm_1}\cdot\tau_{\gm_2}^{-1}$ (see \S 3, \cite{Birman}, for more details on this construction). 

It is easy to see that the assignment $[\vec{\gm}]\mapsto[\gm_i]$ is $\GG_{g,n+1}$-equivariant,
for $i=1,2$, where $\GG_{g,n+1}$ acts on its subgroup $\Pi_{g,n}$ by conjugation.
In particular, this defines two $\GG_{g,n+1}$-equivariant maps 
$s_i\co\cL^\ast_{g,n}\ra\cL_{g,n+1}$, for $i=1,2$, such that the product map 
$s_1\times s_2\co\cL^\ast_{g,n}\ra\cL_{g,n+1}\times\cL_{g,n+1}$ is injective.

It is easy to check that the image of both maps $s_1$ and $s_2$ is the subset $\cL_{g,n+1}^b$ of 
$\cL_{g,n+1}$ consisting of non-peripheral s.c.c.'s which do not bound a two-punctured disc, with one 
of the two punctures labeled by $P_{n+1}$. Let us denote by $\hL_{g,n+1}^b$ the closure 
of $\cL_{g,n+1}^b$ inside the profinite set $\hL_{g,n+1}$. The following is basically a reformulation 
of Proposition~2.7 \cite{Hyp2}:
 
\begin{proposition}\label{extend}For $2g-2+n>0$, the profinite set $\hat{\cL}^\ast_{g,n}$ is the 
$\cGG_{g,n+1}$-completion of $\cL^\ast_{g,n}$. In particular, the maps $s_i$ extend 
naturally to $\cGG_{g,n+1}$-equivariant surjective continuous maps 
$\hat{s}_i\co\hL^\ast_{g,n}\tura\hL_{g,n+1}^b$, for $i=1,2$, such that the product map
$\hat{s}_1\times\hat{s}_2\co\hL^\ast_{g,n}\ra\hL_{g,n+1}^b\times\hL_{g,n+1}^b$ is injective.
\end{proposition}

\begin{proof}In order to prove that $\hat{\cL}^\ast_{g,n}$ is the $\cGG_{g,n+1}$-completion
$\check{\cL}^\ast_{g,n}$ of $\cL^\ast_{g,n}$, let us observe that, for $2g-2+n>0$, there is a 
$\GG_{g,n}$-equivariant isomorphism:
$\cL^{or}_{g,n}\cong\cL_{g,n}\times\{\pm 1\}.$

By Theorem~\ref{curve-complex}, $\hL_{g,n}$ is the $\cGG_{g,n}$-completion
of $\cL_{g,n}$. Hence, the above isomorphism induces an isomorphism of 
$\cGG_{g,n}$-completions
$\check{\cL}^{or}_{g,n}\cong \hL_{g,n}\times\{\pm 1\}$
and so $\check{\cL}^{or}_{g,n}\cong\hL^{or}_{g,n}$.

But now, by definition, $\cL^{or}_{g,n}=\cL^\ast_{g,n}/\Pi_{g,n}$. Hence, it holds
$\check{\cL}^{or}_{g,n}\cong\check{\cL}^\ast_{g,n}/\hP_{g,n}$ as well. On the other hand, 
also by definition, it holds $\hL^{or}_{g,n}=\hL^\ast_{g,n}/\hP_{g,n}$.
Therefore, it holds $\hL^\ast_{g,n}/\hP_{g,n}\cong\check{\cL}^\ast_{g,n}/\hP_{g,n}$, 
which implies that $\hat{\cL}^\ast_{g,n}$ is the $\cGG_{g,n+1}$-completion of $\cL^\ast_{g,n}$.

By the universal property of the $\cGG_{g,n+1}$-completion, the maps $s_i$ then extend 
to surjective continuous $\cGG_{g,n+1}$-equivariant maps 
$\hat{s}_i\co\hL^\ast_{g,n}\tura\hL_{g,n+1}^b$, for $i=1,2$. 

The product map 
$\hat{s}_1\times\hat{s}_2$ is injective, since it factors the injective map which assign to 
$\vec{\gm}\in\hat{\cL}^\ast_{g,n}$ the product of profinite Dehn twists
$\tau_{\hat{s}_1(\vec{\gm})}\tau_{\hat{s}_2(\vec{\gm})}\in\cGG_{g,n+1}$.

\end{proof}

\begin{remark}\label{bpm}For $2g-2+n>0$ and $k\in\ZZ\ssm{0}$, the assignment 
$\vec{\gm}\mapsto\tau^{k}_{\hat{s}_1(\vec{\gm})}\tau^{-k}_{\hat{s}_2(\vec{\gm})}$, 
for $\vec{\gm}\in\hL^\ast_{g,n}$, defines a 
$\cGG_{g,n+1}$-equivariant continuous injective map $\hL^\ast_{g,n}\hookra\cGG_{g,n+1}$.

The injectivety of the map $\hat{s}_1\times\hat{s}_2$ then implies that the centralizer in 
$\cGG_{g,n+1}$ of the product of powers of Dehn twists 
$\tau_{\hat{s}_1(\vec{\gm})}^{k}\tau_{\hat{s}_2(\vec{\gm})}^{-k}$
is the stabilizer $\cGG_\sg$ of the $1$-simplex 
$\sg:=\{\hat{s}_1(\vec{\gm}),\hat{s}_2(\vec{\gm})\}\in L(\hP_{g,n+1})$.
\end{remark}

In order to prove Theorem~\ref{parametrize}, it is necessary to describe the $\hP_{g,n}$-conjugacy
classes of the powers of profinite Dehn twists in $\cGG_{g,n+1}$. Let 
$\check{p}\co\cGG_{g,n+1}\ra\cGG_{g,n}$ be the epimorphism induced by filling in the $(n+1)$-th 
puncture on $S_{g,n+1}$, whose kernel is identified with $\hP_{g,n}$.

\begin{theorem}\label{conjugacy classes}For $2g-2+n>0$, let be given $\gm\in\hL_{g,n}$ and 
$k\in\ZZ\ssm\{0\}$. Let then $\vec{\gm}\in\hL^\ast_{g,n}$ be a pointed oriented profinite s.c.c.
which lifts $\gm$. The set of powers of Dehn twists in $\cGG_{g,n+1}$ which lift
$\tau_\gm^k\in\cGG_{g,n}$, along the epimorphism $\check{p}\co\cGG_{g,n+1}\ra\cGG_{g,n}$, 
is the union of the $\hP_{g,n}$-conjugacy orbits of 
$\tau_{\hat{s}_1(\vec{\gm})}^k$ and $\tau_{\hat{s}_2(\vec{\gm})}^k$. These orbits 
are distinct when $\gm$ is separating and coincide when $\gm$ is non-separating. 
\end{theorem}

\begin{remark}\label{conjugacy discrete}Let us observe that in the discrete case the analogue 
of Theorem~\ref{conjugacy classes} follows from elementary topological considerations. 
Indeed, lifting the power $\tau_\gm^k\in\GG_{g,n}$ of a Dehn twist along the 
epimorphism $\GG_{g,n+1}\ra\GG_{g,n}$, induced by filling in the $(n+1)$-th puncture on 
$S_{g,n+1}$, amounts to removing a point from the Riemann surface $S_{g,n}$ outside the curve 
$\gm$ and then moving the puncture to the marked point $P_{n+1}$. So, modulo point-pushing 
homeomorphisms based at $P_{n+1}$, which correspond to the conjugacy action of $\Pi_{g,n}$, 
there is essentially one choice in case $\gm$ is non-separating and two in case $\gm$ is 
separating. 
\end{remark}

It is enough to prove Theorem~\ref{conjugacy classes} for a discrete s.c.c. $\gm\in\cL_{g,n}$ and for 
$k\in\Z\ssm\{0\}$. In order to do this, we are going to use the short exact sequence $(2.1)$ of 
Section~\ref{levels}.

Let $K$ be a finite index characteristic proper subgroup of $\Pi_{g,n}$ and
let $\GG_{g,[n]+1}$ be the group of homotopy classes of orientation preserving homeomorphisms
of $S_{g,n}$ which fix the base point $P_{n+1}$ of the fundamental group $\Pi_{g,n}$. Since $K$ is 
a characteristic subgroup of $\Pi_{g,n}$, it is induced a natural representation 
$\GG_{g,[n]+1}\ra\out(K)$, such that the image of the subgroup $\Pi_{g,n}$ of $\GG_{g,[n]+1}$
is identified, by the Nielsen realization Theorem, with the covering transformation group $G_K$ of 
the covering $p_K\co S_K\ra S_{g,n}$ associated to the subgroup $K$. 

By the Nielsen realization Theorem, with the notations of Section~\ref{levels}, there is then
a commutative diagram with exact rows and surjective vertical maps:
$$\begin{array}{ccccc}
1\ra&\Pi_{g,n}\ra&\GG_{g,[n]+1}\ra&\GG_{g,[n]}\ra 1&\\
&\da\hspace{0.5 cm}&\da&\parallel\hspace{1 cm}&\hspace{1cm}(5.1)\\
1\ra &G_K\ra &N_{\GG(S_K)}(G_K)\ra&\GG_{g,[n]}\ra 1.&
\end{array}$$

\begin{definition}\label{pseudo-twist}An element of the group $N_{\GG(S_K)}(G_K)$ is called
a pseudo-twist if it is the image of a Dehn twist of $\GG_{g,[n]+1}$ via the natural epimorphism
$\GG_{g,[n]+1}\tura N_{\GG(S_K)}(G_K)$ of diagram $(5.1)$.
\end{definition}

Let us observe that, for any s.c.c. $\gm$ on $S_{g,n}$, the surface $S_K\ssm p_K^{-1}(\gm)$ is 
disconnected and such that every circle in $p_K^{-1}(\gm)$ is a boundary component of two 
distinct connected components (cf. the proof of Lemma~3.10 \cite{sym}). Then, it holds:

\begin{proposition}\label{char-pseudo}\begin{enumerate}
\item An element $\upsilon_\gm\in N_{\GG(S_K)}(G_K)$ which
lifts the $k$-th power of a Dehn twist $\tau^k_\gm\in\GG_{g,[n]}$, for $k\in\Z\ssm\{0\}$, is the 
$k$-th power of a pseudo-twist if and only if it restricts to the identity on at least one of the 
components of the surface $S_K\ssm p_K^{-1}(\gm)$.
\item Let $S$ and $S'$ be the two connected components of the surface $S_K\ssm p_K^{-1}(\gm)$
sharing the boundary component $\td{\gm}\subset p_K^{-1}(\gm)$. Let then 
$\upsilon_{\td{\gm}}$ and $\upsilon_{\td{\gm}}'$ be the $k$-th powers of pseudo-twists which lift the 
given power of Dehn twist $\tau^k_\gm$ and which restrict to the identity, respectively, on $S$ and 
$S'$. Then, there is an element $\gm_\ast\in\Pi_{g,n}$, whose free homotopy class contains $\gm$, 
such that its image $\ol{\gm}_\ast$ in $G_K$ generates the stabilizer of the s.c.c. 
$\td{\gm}\subset S_K$ and it holds $\upsilon_{\td{\gm}}'\upsilon_{\td{\gm}}^{-1}=\ol{\gm}_\ast^k$.
\end{enumerate}
\end{proposition}

\begin{proof}Let us assume that the s.c.c. $\gm$ is non-separating, the other case can be treated
similarly. With the same notations and conventions of the proof of Lemma~\ref{closure}, let us
assume moreover that $\gm$ is disjoint from the loops 
$\alpha_1,\ldots\alpha_{g-1},\beta_1\ldots\beta_{g-1}, u_1,\ldots u_{n+2}$.
The natural representation $\GG_{g,[n]+1}\hookra\aut(\Pi_{g,n})$ is then such that the power of 
Dehn twist $\td{\tau}_\gm^k\in\GG_{g,[n]+1}$, which lifts the power of Dehn twist 
$\tau_\gm^k\in\GG_{g,[n]}$, acts trivially on the subgroup $\Pi_{g-1,n+2}$ of $\Pi_{g,n}$. 
In particular, $\td{\tau}_\gm^k$ acts trivially on the intersection $K\cap\Pi_{g-1,n+2}$.

Let $\td{P}$ be a point of $S_K$ lying above the base point $P_{n+1}$ of the fundamental group
$\Pi_{g,n}$ of $S_{g,n}$ and let us identify $K$ with the fundamental group of $S_K$ based at
$\td{P}$. The fundamental group of the connected component of $S_K\ssm p_K^{-1}(\gm)$ 
which contains the point $\td{P}$ clearly identifies with the intersection $K\cap\Pi_{g-1,n+2}$.
Hence, the power of Dehn twist $\td{\tau}_\gm^k$ induces the trivial action on this
connected component. Since all connected components of $S_K\ssm p_K^{-1}(\gm)$ 
are in a single orbit for the action of $G_K$, part $i.)$ of Proposition~\ref{char-pseudo} follows.

As to part $ii.)$, let us just observe that $\upsilon_{\td{\gm}}'\upsilon_{\td{\gm}}^{-1}$ is the image,
via the representation $\GG_{g,[n]+1}\ra N_{\GG(S_K)}(G_K)$, of the $k$-th power of the bounding
pair map associated to an element $\gm_\ast\in\Pi_{g,n}$ which identifies in
the group $\out(K)$ with the element of $G_K$ with the properties stated in part $ii.)$.

\end{proof}

Let $\check{N}_{\GG(S_K)}(G_K)$ be the closure of $N_{\GG(S_K)}(G_K)$ 
inside the profinite group $\cGG(S_K)$. This is the same profinite completion as that
induced by the procongruence completion $\cGG_{g,[n]+1}$ via the natural epimorphism
$\GG_{g,[n]+1}\tura N_{\GG(S_K)}(G_K)$. Hence, the diagram $(5.1)$ induces 
a commutative diagram with exact rows and surjective vertical maps:
$$\begin{array}{ccccc}
1\ra&\hP_{g,n}\ra&\cGG_{g,[n]+1}\ra&\cGG_{g,[n]}\ra 1&\\
&\da\hspace{0.5 cm}&\da&\parallel\hspace{1 cm}&\hspace{1cm}(5.2)\\
1\ra &G_K\ra &\check{N}_{\GG(S_K)}(G_K)\ra&\cGG_{g,[n]}\ra 1.&
\end{array}$$
Moreover, the upper row of the diagram is isomorphic to the inverse limit of the bottom rows,
when $K$ varies among all finite index characteristic subgroup of $\Pi_{g,n}$.
Theorem~\ref{conjugacy classes} then follows from the lemma:

\begin{lemma}\label{profinite pseudo}Let $\tau_\gm^k\in\cGG_{g,[n]}$, for $k\in\Z\ssm\{0\}$, be 
the power of a discrete Dehn twist and let $\up_\gm$ be a lift of this element to 
$\check{N}_{\GG(S_K)}(G_K)$, which then belongs to the discrete subgroup 
$N_{\GG(S_K)}(G_K)$. Then, $\up_\gm$ belongs to the closure of the set of $k$-th powers of 
pseudo-twists in the profinite group $\check{N}_{\GG(S_K)}(G_K)$ 
if and only if it is the $k$-th power of a pseudo-twist.
\end{lemma}

\begin{proof}The inverse image of $\GG_{g,[n]}$, considered as a subgroup of $\cGG_{g,[n]}$, via the
epimorphism $\check{N}_{\GG(S_K)}(G_K)\ra\cGG_{g,[n]}$ is the discrete subgroup
$N_{\GG(S_K)}(G_K)$. Therefore, any lift in the profinite group $\check{N}_{\GG(S_K)}(G_K)$ of 
a discrete element of the profinite group $\cGG_{g,[n]}$ is also discrete.

Let $\upsilon_\gm\in N_{\GG(S_K)}(G_K)$ be a lift of $\tau_\gm^k\in\GG_{g,[n]}$ which
does not restrict to the identity on any component of the surface $S_K\ssm p_K^{-1}(\gm)$.
By Proposition~\ref{char-pseudo}, in order to prove the lemma, it is enough to show that 
$\upsilon_\gm$ is not in the closure in $\check{N}_{\GG(S_K)}(G_K)$ of the set of $k$-th powers 
of pseudo-twists of $N_{\GG(S_K)}(G_K)$. Let us then assume, on the contrary, that $\up_\gm$ 
belongs to such a closure and let us show how this leads to a contradiction.  

Let $s$ be the smallest positive integer such that $\gm^s\in K$. Then, $\tau_\gm^s\in\GG_{g,[n]}$
lifts, canonically, in $N_{\GG(S_K)}(G_K)$ to the product, which we denote by $\xi_\gm$, of the Dehn 
twists about the s.c.c.'s contained in $p_K^{-1}(\gm)$. This element is in the centralizer of the finite
group $G_K$. It follows that the $\check{N}_{\GG(S_K)}(G_K)$-conjugacy orbit of 
$\xi_\gm^k$ is mapped bijectively, by the epimorphism $\check{N}_{\GG(S_K)}(G_K)\ra\cGG_{g,[n]}$, 
onto the $\cGG_{g,[n]}$-conjugacy orbit of $\tau_\gm^{sk}$. 

By the above assumption, $\up_\gm^s$ is in the closure in $\check{N}_{\GG(S_K)}(G_K)$ of the set 
of $sk$-th powers of pseudo-twists of $N_{\GG(S_K)}(G_K)$. Hence,
$\up_\gm^s$ is in the $\check{N}_{\GG(S_K)}(G_K)$-conjugacy orbit of $\xi_\gm^k$. Since both 
elements project to $\tau_\gm^{sk}$, it holds $\up_\gm^s=\xi_\gm^k$.

For a given commutative ring $A$ with $1$, let us denote the homology group $H_1(\ol{S}_K,A)$ 
simply by $H_A$ and by $\langle\_,\_\rangle_A$ its standard symplectic form. Let then
$\mathfrak{sp}(H_A)$ be the Lie algebra of the symplectic group $\Sp(H_A)$, i.e. the $A$-module 
of endomorphisms $\varphi$ of $H_A$ satisfying the identity
$\langle\varphi(x),y\rangle_A+\langle x,\varphi(y)\rangle_A=0$, for all $x,y\in H_A$.

For $\varphi\in\mathfrak{sp}(H_A)$, the assignment $\varphi\mapsto\langle\_,\varphi(\_)\rangle_A$
defines a natural isomorphism:
$$\mathfrak{sp}(H_A)\sr{\sim}{\ra}\mathrm{L}^2_s(H_A),$$
where $\mathrm{L}^2_s(H_A)$ is the $A$-module of symmetric bilinear forms on $H_A$. There is
also a natural isomorphism $\mathrm{L}^2_s(H_A)\cong\mathrm{Sym}^2 H_A^\ast$, where 
$H_A^\ast:=\hom_A(H_A,A)$ is the dual $A$-module which can be identified with the cohomology 
group $H^1(\ol{S}_K,A)$. 

The Poincar\'e dual of an element $a\in H_A$ is the $A$-linear map 
$a^\vee:=\langle\_, a\rangle_A\in H_A^\ast$ and, for a submodule $M$ of $H_A$, its Poincar\'e dual 
$M^\vee$ is defined to be the submodule $\{m^\vee |m\in M\}$ of $H_A^\ast$. In contrast with 
$(\_)^\ast$, this is a covariant functor. 

Let us denote also by $\langle\_,\_\rangle_A$ the symplectic form induced 
on $H_A^\ast$ by the standard symplectic form on $H_A$, i.e. 
$\langle a^\vee,b^\vee\rangle_A:=\langle b,a\rangle_A$. The Poincar\'e duals of elements 
and submodules of $H_A^\ast$ are defined as above. For $a\in H_A$ 
or $a\in H_A^\ast$, the identity $(a^\vee)^\vee=a$ then holds, where $H_A$ is identified
with $H_A^{\ast\ast}$ by the usual canonical assignment $a\mapsto[f\mapsto f(a)]$.

An element $f\in\GG(S_K)$ induces a symplectic automorphism of $H_A$, which we also denote by 
$f$. Let us define the logarithm $\log f$ to be the symmetric bilinear form on $H_A$ associated 
to the endomorphism $f-\mathrm{id}_{H_A}$.

For $\alpha$ a s.c.c. on the closed Riemann surface $\ol{S}_K$, let us denote by $\vec{\alpha}$ the
cycle determined in $H_A$ by the s.c.c. $\alpha$ with a given orientation.
From simple direct computations (cf. \S 1 in \cite{L}), it follows that, for the Dehn twist $\tau_\alpha$, 
it holds $\log\tau_\alpha=\vec{\alpha}^\vee\otimes\vec{\alpha}^\vee$. In particular, the logarithm of
$\tau_\alpha$ does not depend on the chosen orientation on $\alpha$.

It is possible to recover the submodule $\langle\vec{\alpha}\rangle$ of $H_A$ generated by the cycle 
$\vec{\alpha}$ from the logarithm $\vec{\alpha}^\vee\otimes\vec{\alpha}^\vee$ of the Dehn twist 
$\tau_\alpha$ as the Poincar\'e dual of the dual of the cokernel of the corresponding symmetric 
bilinear form:
$$\langle\vec{\alpha}\rangle=
\left(\left(H_A/\ker\vec{\alpha}^\vee\otimes\vec{\alpha}^\vee\right)^\ast\right)^\vee.$$

For a given symmetric bilinear form $\varphi\in\mathrm{Sym}^2 H_A^\ast$, in order to simplify
the terminology, we call the Poincar\'e dual of the dual of its cokernel, which is a submodule
of $H_A$, the {\it core} of the form $\phi$. So that, we can write 
$\mathrm{core}\,(\vec{\alpha}^\vee\otimes\vec{\alpha}^\vee)=\langle\vec{\alpha}\rangle$.

More generally, for $A=\Z$, it holds:

\begin{lemma}\label{mochizuki}For $c_1,\ldots,c_h\in\Z^+$ and 
$\alpha_1,\ldots\alpha_h\in H_\Z$, it holds:
$$\langle\vec{\alpha_1},\ldots,\vec{\alpha}_h\rangle=\mathrm{core}\,\left(
\sum_{i=1}^h c_i\,\vec{\alpha_i}^\vee\otimes\vec{\alpha_i}^\vee\right).$$
\end{lemma}
\begin{proof}The lemma follows from the identity
$\ker(\sum_{i=1}^h c_i\,\vec{\alpha_i}^\vee\otimes\vec{\alpha_i}^\vee)=
\bigcap_{i=1}^h\ker\vec{\alpha_i}^\vee\otimes\vec{\alpha_i}^\vee$, which we are going 
to prove below, and the series of identities:
$$\left(\left(H_\Z\left/\bigcap_{i=1}^h \ker\vec{\alpha_i}^\vee\otimes\vec{\alpha_i}^\vee\right.
\right)^\ast\right)^\vee=\langle\bigcup_{i=1}^h\left(H_\Z/\ker\vec{\alpha_i}^\vee
\otimes\vec{\alpha_i}^\vee\right)^\ast \rangle^\vee=
\langle\vec{\alpha_1},\ldots,\vec{\alpha}_h\rangle.$$

Let $\varphi:=\sum_{i=1}^h c_i\,\vec{\alpha_i}^\vee\otimes\vec{\alpha_i}^\vee$. Then, the symmetric
bilinear form $\varphi$ induces a positive definite form on the quotient $H_\Z/\ker\varphi$. Since
$\ker\varphi$ is a primitive sublattice of $H_\Z$, there is a splitting 
$H_\Z=\ker\varphi\oplus M$ such that $\varphi$ restricts to a positive definite symmetric bilinear
form on $M$. It holds $H_\Q\cong(\ker\varphi\otimes\Q)\oplus (M\otimes\Q)$ and $\varphi$
extends to a symmetric bilinear form $\varphi'$ on this $\Q$-vector space which restricts
to a positive definite form on $M\otimes\Q$ and such that
$\ker\varphi'=\ker\varphi\otimes\Q$ and $\ker\varphi=\ker\varphi'\cap H_\Z$.

Let us denote by $\xi_i$ the extension of the symmetric bilinear form 
$\vec{\alpha_i}^\vee\otimes\vec{\alpha_i}^\vee$ to $H_\Q$, for $i=1,\ldots,h$. Then, it holds
$\varphi'=\sum_{i=1}^h c_i\,\xi_i$ and 
$\ker\vec{\alpha_i}^\vee\otimes\vec{\alpha_i}^\vee=\ker\xi_i\cap H_\Z$, for $i=1,\ldots,h$. 
By elementary linear algebra, it holds $\ker\varphi'=\bigcap_{i=1}^h \ker\xi_i$ and this 
implies the identity claimed above.

\end{proof}

Let us denote by $(\log\xi_\gm^k)_p$ the reduction mod $p$ of the symmetric bilinear form 
$\log\xi_\gm^k$. The latter induces a non-degenerate form on the 
quotient $H_\Z/\ker(\log\xi_\gm^k)$. Therefore, there is a non-empty open set $U$ of $\Spec(\Z)$ 
such that, for all $p\in U$, the symmetric bilinear form $(\log\xi_\gm^k)_p$ induces 
a non-degenerate form on the mod $p$ reduction 
$(H_{\Z}/\ker(\log\xi_\gm^k))\otimes\F_p$, which is then isomorphic to
$H_{\F_p}/\ker(\log\xi_\gm^k)_p$. It follows that, for all primes $p\in U$, it holds 
$\ker(\log\xi_\gm^k)_p\cong\ker(\log\xi_\gm^k))\otimes\F_p$.

For any given s.c.c. $\alpha$ on $S_{g,n}$, let us denote by $H_\alpha$ the primitive submodule of 
$H_\Z$ generated by the cycles supported in $p_K^{-1}(\alpha)$. For a prime $p$, let $\F_p$ be 
the field with $p$ elements, let then $H_{\alpha,p}$ be the image of $H_\alpha$ in the $\F_p$-vector 
space $H_{\F_p}$ via the natural epimorphism $H_\Z\tura H_{\F_p}$. There is a natural isomorphism 
$H_{\alpha,p}\cong H_\alpha\otimes\F_p$. 

By Lemma~\ref{mochizuki}, $H_\gm$ is the core of the bilinear form
$\log\xi_\gm^k=k\log\xi_\gm$ on $H_\Z$. 
It follows that, for all primes $p\in U$, the core of the mod $p$ reduction $(\log\up_\gm^s)_p$ of the 
bilinear form $\log\up_\gm^s=\log\xi_\gm^k$ is the subspace $H_{\gm,p}$ of $H_{\F_p}$.

By hypothesis, there is  a pseudo-twist $\td{\tau}_\delta$ of $N_{\GG(S_K)}(G_K)$, lifting a
Dehn twist $\tau_\delta\in\GG_{g,[n]}$, such that $\td{\tau}_\delta^k$ and $\up_\gm$ map to
the same element of the symplectic group $\Sp(H_{\F_p})$, for some $p\in U$ with $p>2$. 
In particular, the core of the bilinear form $(\log\td{\tau}_\delta^{ks})_p$ on $H_{\F_p}$ is the 
subspace $H_{\gm,p}$. Since $H_{\delta,p}$ is contained in the core of 
$(\log\td{\tau}_\delta^{ks})_p$ and is of the same dimension, i.e. the dimension of $H_{\gm,p}$, 
it follows that it holds $H_{\delta,p}=\ker(\log\td{\tau}_\delta^{ks})_p=H_{\gm,p}$.

For a given s.c.c. $\alpha$ on $S_{g,n}$, let $\Sg_\alpha$ be the dual graph of the degeneration of 
the Riemann surface $\ol{S}_K$ obtained contracting all s.c.c.'s contained in $p_K^{-1}(\alpha)$. 
There is then an epimorphism $\Phi_\alpha\co H_{\Z}\tura H_1(\Sg_\alpha,\Z)$ which is the Poincar\'e 
dual of the dual of the inclusion $H_{\alpha}\subset H_{\Z}$ (cf. Proposition~1 and 1bis in \cite{Br}). 
The epimorphism $\Phi_\alpha$ contains $H_{\alpha}$ in its kernel and induces an epimorphism 
$\ol{\Phi}_\alpha\co H_{\Z}/H_{\alpha}\tura H_1(\Sg_\alpha,\Z)$. The standard symplectic form on 
$H_\Z$ induce a non-degenerate symplectic form on $K_\alpha:=\ker\ol{\Phi}_\alpha$.

The above statements hold for any system of coefficients for the homology. 
For a prime $p$, let then $\ol{\Phi}_{\alpha,p}\co H_{\F_p}/H_{\alpha,p}\tura H_1(\Sg_\alpha,\F_p)$ 
be the corresponding epimorphism with $\F_p$-coefficients and let
$K_{\alpha,p}:=\ker\ol{\Phi}_{\alpha,p}$.

With the above notations, it holds $K_{\gm,p}=K_{\delta,p}$ and there is then a natural 
symplectic isomorphism $K_\gm\otimes\F_p\cong K_\delta\otimes\F_p$. This isomorphism lifts to a 
symplectic isomorphism $K_\gm\cong K_\delta$. If we identify the latter two $\Z$-modules by
means of this isomorphism, the elements $\upsilon_\gm$ and $\td{\tau}_\delta^k$ both act on the 
$\Z$-module $K_\gm$ and have the same reduction modulo $p$. 

Let $\ol{S}_\gm$ be the disconnected closed Riemann surface obtained filling in all the punctures 
of $S_K\ssm p_K^{-1}(\gm)$. Then, $H_1(\ol{S}_\gm,\Z)=K_\gm$ and the lift $\upsilon_\gm$ acts 
on $\ol{S}_\gm$ as a finite order homeomorphism which does not restrict to the identity on any 
component of $\ol{S}_\gm$. 

Let then $\ol{S}'_\gm$ be the quotient of $\ol{S}_\gm$ by the action of the 
finite cyclic group generated by $\upsilon_\gm$. It follows that the invariant submodule 
$(K_\gm)^{\upsilon_\gm}$ is naturally isomorphic to the homology group $H_1(\ol{S}'_\gm,\Z)$ and 
hence that $(K_\gm)^{\upsilon_\gm}$ does not contain any primitive non-degenerate symplectic 
submodule of $K_\gm$.  

By Proposition~\ref{char-pseudo}, the action induced by $\td{\tau}_\delta^k$ on $\ol{S}_\delta$ 
restricts to the identity on at least a component of this surface and then preserves a primitive 
non-degenerate symplectic submodule of $K_\delta$. Therefore, the action induced by 
$\td{\tau}_\delta^k$ on $K_\gm$, by means of the above identification, is not conjugated to that of 
$\upsilon_\gm$ but induces the same action on $K_\gm\otimes\F_p$. Lemma~\ref{profinite pseudo} 
is then a consequence of the following lemma which gives a contradiction:

\begin{lemma}Let $C_1$ and $C_2$ two finite cyclic subgroups of $\Sp_{2g}(\Z)$ which are not
conjugated inside this group. Then, for all integers $m\geq 3$, their images $\ol{C}_1$ and 
$\ol{C}_2$ in the quotient group $\Sp_{2g}(\Z/m)$ are also non-conjugated.
\end{lemma}

\begin{proof}This fact is possibly well known but is, at least, an easy consequence of the theory of
abelian varieties with automorphisms. Let $\cA_1$ and $\cA_2$ be the irreducible substacks of 
$\cA_g$, the moduli stack of complex principally polarized abelian varieties of dimension $g$, 
parametrizing abelian varieties with an automorphism conjugated, respectively, to $C_1$ and 
$C_2$. Since $C_1$ and $C_2$ are not conjugated in $\Sp_{2g}(\Z)$, it follows that 
$\cA_1\neq\cA_2$.

Let $\cA_g^{(m)}$ be the moduli stack of complex principally polarized abelian varieties of 
dimension $g$ with a level $m\geq 3$ structure. The natural morphism $p\co\cA_g^{(m)}\ra\cA_g$ 
is a Galois covering with deck transformation group $\Sp_{2g}(\Z/m)$ and the irreducible 
components of $p^{-1}(\cA_1)$ and $p^{-1}(\cA_2)$ are the fixed point loci of the conjugates inside 
$\Sp_{2g}(\Z/m)$ of the subgroups $\ol{C}_1$ and $\ol{C}_2$, respectively. If these two groups 
were in the same conjugacy class, it would follow that $\cA_1=\cA_2$, which is not the case.

\end{proof}\end{proof}

By Proposition~\ref{extend}, there are continuous maps
$\hat{s}^k_{i\ast}\co\hL^\ast_{g,n}\ra\cGG_{g,n+1}$, for $i=1,2$ and $k\in\ZZ$,  
sending a profinite loop $\vec{\gm}\in\hL^\ast_{g,n}$ to the power of Dehn twist 
$\tau_{\hat{s}_i(\vec{\gm})}^k\in\cGG_{g,n+1}$.

For $\gm\in\hL_{g,n}$ a given profinite s.c.c., let us then describe in a more 
precise way the set of all pairs of $k$-th powers of Dehn twists 
$(\tau^{k}_{\hat{s}_1(\vec{\alpha})},\tau^{k}_{\hat{s}_2(\vec{\alpha})})$, for 
$\vec{\alpha}\in\cL^\ast_{g,n}$ such that $\tau_{\hat{q}(\vec{\alpha})}^k=\tau_\gm^k$,
where $\hat{q}\co\hat{\cL}^\ast_{g,n}\tura\hat{\cL}_{g,n}$ is the natural continuous 
surjective map.

Let us define the $\cGG_{g,n+1}$-equivariant continuous map
$\hat{s}^k_{\ast}\co\hL^\ast_{g,n}\ra\cGG_{g,n+1}\times\cGG_{g,n+1}$
which assigns to a profinite oriented pointed s.c.c. $\vec{\gm}\in\hL^\ast_{g,n}$ the pair of powers 
of profinite Dehn twists $(\tau^{k}_{\hat{s}_1(\vec{\gm})},\tau^{k}_{\hat{s}_2(\vec{\gm})})$,
where $\cGG_{g,n+1}$ act on the product $\cGG_{g,n+1}\times\cGG_{g,n+1}$
by conjugation via the diagonal embedding.
The map $\hat{s}^k_{\ast}$ is injective, since it factors the injective map which sends
$\vec{\gm}\in\hL^\ast_{g,n}$ to the product 
$\tau^{k}_{\hat{s}_1(\vec{\gm})}\tau^{-k}_{\hat{s}_2(\vec{\gm})}\in\cGG_{g,n+1}$.

\begin{theorem}\label{lifts}For $2g-2+n>0$ and $k\in\ZZ\ssm\{0\}$, let $\gm\in\hL_{g,n}$ 
be a given profinite s.c.c.. Then, the set formed by the ordered pairs 
$(\tau^k_{\hat{s}_1(\vec{\alpha})},\tau^k_{\hat{s}_2(\vec{\alpha})})$, for 
$\vec{\alpha}\in\hL^\ast_{g,n}$, such that $\tau_{\hat{q}(\vec{\alpha})}^k=\tau_\gm^k$, consists of 
two $\hP_{g,n}$-orbits, each corresponding to an orientation of $\gm$, where 
the group $\hP_{g,n}$ acts diagonally on such pairs by conjugation.
\end{theorem}
\begin{proof}As usual, it is enough to prove the theorem in case $\gm$ is the class of a s.c.c.
on $S_{g,n}$. 

By Theorem~\ref{conjugacy classes}, the set of pairs of powers of profinite Dehn twists of the form 
$(\tau^k_{\hat{s}_1(\vec{\alpha})},\tau^k_{\hat{s}_2(\vec{\alpha})})$, for 
$\vec{\alpha}\in\hL^\ast_{g,n}$, lying above the element $\tau_\gm^k\in\GG_{g,n}$, is the inverse 
limit of the set of pairs of powers of pseudo-twists described in ii.) Proposition~\ref{char-pseudo}. 
These pairs fit at most in two conjugacy classes, corresponding to the possible 
orientations of $\gm$. Therefore, the same holds for the pairs of powers of Dehn twists lying above 
$\tau_\gm^k$. 

On the other hand, since the two lifts $\vec{\gm}$ and $\vec{\gm}^{-1}$ of $\gm$ in 
$\hL^\ast_{g,n}$ are not conjugated in the profinite group $\hP_{g,n}$, it follows that there are 
at least two such conjugacy classes. 

\end{proof}

We can now prove Theorem~\ref{parametrize}. For ${\alpha},{\beta}\in\hL_{g,n}$ such that
$\tau_\alpha^{k}=\tau_\beta^{k}$, by Theorem~\ref{lifts}, there are lifts 
$\vec{\alpha},\vec{\beta}\in\hL^\ast_{g,n}$ such that the ordered
pairs $(\tau^{k}_{\hat{s}_1(\vec{\alpha})},\tau^{k}_{\hat{s}_2(\vec{\alpha})})$ and 
$(\tau^{k}_{\hat{s}_1(\vec{\beta})},\tau^{k}_{\hat{s}_2(\vec{\beta})})$ are in the same 
$\hP_{g,n}$-orbit. Since the injective map 
$\hat{s}^k_{\ast}\co\hL^\ast_{g,n}\hookra\cGG_{g,n+1}\times\cGG_{g,n+1}$ is 
$\hP_{g,n}$-equivariant, it follows that $\vec{\alpha}$ and $\vec{\beta}$ are 
conjugated in $\hP_{g,n}$, i.e. $\alpha=\beta$.

\begin{remark}\label{separability}From the identity $\hL_{g,n}^\ast\cap\GG_{g,n+1}=\cL_{g,n}^\ast$, 
it follows that, if we denote by $\cD^k$, for $k\in\ZZ$, the set of $k$-th powers of Dehn twists of the 
Teichm\"uller group $\GG_{g,n}$ and by $\check{\cD}^k$ its closure in the procongruence completion
$\cGG_{g,n}$, it holds $\check{\cD}^k\cap\GG_{g,n}=\cD^k$, if $k\in\Z$, otherwise this intersection is
empty. Together with Theorem~\ref{parametrize}, this implies that, for $\gm,\gm'\in\hat{\cL}$ and 
$k,k'\in\ZZ\ssm\{0\}$, it holds $\tau_\gm^k=\tau_{\gm'}^{k'}$ if and only if $\gm=\gm'$ and $k=k'$.
\end{remark}

\section{Centralizers of profinite Dehn twists in $\cGG_{g,n}$}\label{centralizers}
An immediate consequence of Theorem~\ref{parametrize} and Theorem~\ref{stab pro-curves} is 
a description of centralizers of powers of profinite Dehn twists in the procongruence Teichm\"uller 
group:

\begin{corollary}\label{centralizer twist}For $2g-2+n>0$, let $\gm\in\hL_{g,n}$ and $k\in\ZZ\ssm{0}$.
Then, the centralizer in $\cGG_{g,n}$ of the element $\tau_\gm^k$
coincides with the stabilizer $\cGG_\gm$ of the profinite s.c.c. $\gm$ in $\hL_{g,n}$. 
\end{corollary}

The center of the procongruence Teichm\"uller group $\cGG_{g,n}$ and the centralizers of its 
open subgroups are then also determined:

\begin{corollary}\label{center-free}Let $2g-2+n>0$. For every open subgroup $U$ of 
$\cGG_{g,n}$, it holds:
$$Z(\cGG_{g,n})=Z_{\cGG_{g,n}}(U)=Z(\GG_{g,n}).$$
Thus, all these groups are trivial for $(g,n)\neq(1,1),(2,0)$ and, otherwise, they are 
generated by the hyperelliptic involution. In particular, by Theorem~\ref{stab pro-curves},  
for $\vec{\gm}$ an oriented s.c.c. on the Riemann surface $S_{g,n}$, it holds 
$Z(\cGG_{\vec{\gm}})=Z(\GG_{\vec{\gm}})$.
\end{corollary}
\begin{proof}For $g\leq 2$, this is just Proposition~3.2 in \cite{Hyp}.
Let us then proceed by induction on the genus.
For $g\geq 3$, let $\gm$ be a s.c.c. on $S_{g,n}$ bounding an unpunctured genus $1$ subsurface 
of $S_{g,n}$. By Corollary~\ref{centralizer twist}, for every $k\in\ZZ\ssm{0}$, the centralizer of the 
Dehn twist $\tau_\gm^k$ is the stabilizer $\cGG_\gm$. By Theorem~\ref{stab pro-curves}, $\cGG_\gm$ 
is the closure of the discrete stabilizer $\GG_\gm=\GG_{\vec{\gm}}$. In particular, it holds 
$\cGG_\gm=\cGG_{\vec{\gm}}$. By Theorem~\ref{stab pro-curves}, there is then a short exact 
sequence:
$$1\ra\ZZ\cdot\tau_{\gm}\ra\cGG_\gm\ra\cGG_{g_1,n_1}\times\cGG_{1,1}\ra 1.$$

From the induction hypothesis, it then follows that the centralizer $Z_{\cGG_\gm}(\cGG_\gm\cap U)$ 
is the closure in $\cGG_\gm$ of the center of the discrete subgroup $\GG_\gm$.
Therefore, the centralizer $Z_{\cGG_\gm}(\cGG_\gm\cap U)$ is the closed cyclic 
subgroup generated by the half Dehn twist $\tau_\gm^{1/2}$ about $\gm$ which restricts to the 
identity on the subsurface $S_{g_1,n_1}$.

The open subgroup $U$ of $\cGG_{g,n}$ contains a power of $\tau_\gm$. Hence, it holds
$Z_{\cGG_{g,n}}(U)\leq\cGG_\gm$ and then 
$Z_{\cGG_{g,n}}(U)\leq Z_{\cGG_\gm}(\cGG_\gm\cap U)$.
So, the centralizer of $U$ in $\cGG_{g,n}$ is topologically generated by a power of the half 
Dehn twist $\tau_\gm^{1/2}$. But no non-trivial power of $\tau_\gm^{1/2}$ is in the center of 
$\GG_{g,n}\cap U$. Therefore, it holds $Z_{\cGG_{g,n}}(U)=\{1\}$.

\end{proof}

Let $\gm$ be a s.c.c. on $S_{g,n}$. It is reasonable to expect that the set of profinite Dehn twists of 
$\cGG_{g,n}$, which are contained in the stabilizer $\cGG_\gm$, is the closure of the set of Dehn 
twists contained in the discrete stabilizer $\GG_\gm$. In virtue of Remark~\ref{link}, another way to 
state this is to say that the profinite Dehn twists of $\cGG_{g,n}$, contained in $\cGG_\gm$, are 
parametrized by the vertices of the closed star of $\gm$ in the complex of profinite curves 
$L(\hP_{g,n})$. More precisely, it holds:

\begin{theorem}\label{Dehn stabilizer}For $2g-2+n>0$, let $\alpha,\beta\in\hL_{g,n}$ and let
$\ol{\mathrm{St}}(\beta)$ be the closed star of $\beta$ inside 
the complex of profinite curves $L(\hP_{g,n})$.
\begin{enumerate}
\item If $\alpha\in\ol{\mathrm{St}}(\beta)_0$, then 
$\cGG_\beta\cap\ZZ\cdot\tau_\alpha=\ZZ\cdot\tau_\alpha$.
\item If $\alpha\notin\ol{\mathrm{St}}(\beta)_0$, then $\cGG_\beta\cap\ZZ\cdot\tau_\alpha=\{1\}$.
\end{enumerate}
\end{theorem}

\begin{proof}The complex of profinite curves $L(\hP_{g,n})$ parametrizes the nerve of the 
D--M boundary $\dd\tcM$ of the profinite covering $\tcM$ of $\ccM_{g,n}$ (cf. the proof of 
Theorem~\ref{curve-complex}) while the profinite set of vertices of the link 
$\mathrm{Lk}(\beta)$ parametrizes the boundary components of the irreducible component 
$\td{\delta}_\beta$ of the boundary $\dd\tcM$, associated to the profinite s.c.c. $\beta\in\hL$, 
i.e. the irreducible components of the locus where $\td{\delta}_\beta$ intersects the other 
irreducible components of $\dd\tcM$. 

The theorem then is equivalent to the claim that a non-trivial power 
$\tau_\alpha^h$ of a profinite Dehn twist of $\cGG_{g,n}$, for $\alpha\neq\beta\in\hL$, 
stabilizes $\td{\delta}_\beta$, if and only if, the 
irreducible component $\td{\delta}_\alpha$, associated to $\alpha$, intersects $\td{\delta}_\beta$, 
i.e, if and only if, the profinite Dehn twist $\tau_\alpha$ generates the inertia group of a 
boundary component of $\td{\delta}_\beta$.

The analogous statement is well known when $\tau_\alpha$ is a Dehn twist of $\GG_{g,n}$ and 
$\ol{T}_\beta$ is the boundary component of the Bers bordification 
$\ol{T}_{g,n}$ associated to a discrete s.c.c. $\beta$. It can be reformulated by saying that the 
natural map $\ol{T}_\beta\ra\ol{T}_{g,n}/\langle\tau_\alpha\rangle$ is either an embedding or 
factors through an embedding 
$\ol{T}_\beta/\langle\tau_\alpha\rangle\hookra\ol{T}_{g,n}/\langle\tau_\alpha\rangle$ and, in the latter
case, the subgroup $\langle\tau_\alpha\rangle$ of $\GG_{g,n}$ identifies with the inertia group of 
a boundary component of $\ol{T}_\beta$.

Let us describe explicitly the quotient $\tcM/\ol{\langle\tau_\alpha\rangle}$. 
Let $\{\GG^\ld\}_{\ld\in\Ld}$ be the tower of all geometric levels of the Teichm\"uller group 
$\GG_{g,n}$. There is a series of natural isomorphisms:
$$\tcM=\lim\limits_{\sr{\textstyle\st\longleftarrow}{\sst\ld\in\Ld}}\,\ol{T}_{g,n}/\GG^\ld\cong
\lim\limits_{\sr{\textstyle\st\longleftarrow}{\sst\ld\in\Ld}}\,\left.[\ol{T}_{g,n}\times(\GG_{g,n}/\GG^\ld)]
\right/\GG_{g,n}=\left.(\ol{T}_{g,n}\times\cGG_{g,n})\right/\GG_{g,n},$$ 
where an element $f\in\GG_{g,n}$ acts on the product $\ol{T}_{g,n}\times\cGG_{g,n}$ 
by the formula $f\cdot(x,h)=(f\cdot x,fh)$. Then, it holds:
$$\left.\tcM/\ol{\langle\tau_\alpha\rangle}\cong[\ol{T}_{g,n}\times
(\cGG_{g,n}/\ol{\langle\tau_\alpha\rangle})]\right/\GG_{g,n}.$$

Let us identify the Bers bordification $\ol{T}_{g,n}$ with a subspace of the covering $\tcM$.
For every $f\in\cGG_{g,n}$, the translation map $\phi_f\co\ol{T}_{g,n}\hookra\tcM$, defined by 
$x\mapsto f\cdot x$, corresponds, via the above isomorphism, to the natural injective map 
$\ol{T}_{g,n}\times\{f\}\hookra(\ol{T}_{g,n}\times\cGG_{g,n})/\GG_{g,n}$. 
In particular, the natural embedding of $\ol{T}_{g,n}$ inside $\tcM$ corresponds to the map $\phi_1$. 
The subspaces $\phi_f(\ol{T}_{g,n})$ are the analytic leafs of the space $\tcM$. 

For a given $\beta\in\hL$, let $\gm\in\cL$ and $h\in\cGG_{g,n}$ be such that $h(\gm)=\beta$.
Arguing as above, it follows that the boundary component $\td{\delta}_\beta$ of 
$\tcM$ is isomorphic to the quotient $(\phi_h(\ol{T}_\gm)\times\cGG_\beta)/h\GG_\gm h^{-1}$.
Then, for $f\in\cGG_\beta$, the natural injective maps 
$$\ol{T}_\gm\times\{fh\}\hookra\left.\ol{T}_{g,n}\times\cGG_{g,n}\right/\GG_{g,n}$$ 
describe the analytic leafs of the boundary component $\td{\delta}_\beta$ of $\tcM$.

Let $\psi_f$ be the composition of $\phi_f$, for $f\in\cGG_{g,n}$, with the quotient map
$\tcM\ra\tcM/\ol{\langle\tau_\alpha\rangle}$. Then, the analytic leafs of 
$\tcM/\ol{\langle\tau_\alpha\rangle}$ are given by the images $\psi_f(\ol{T}_{g,n})$, 
for $f\in\cGG_{g,n}$.

In order to describe these leafs, we need to determine the stabilizer of a left coset 
$f\cdot\ol{\langle\tau_\alpha\rangle}$ in the group $\cGG_{g,n}$, for $f\in\cGG_{g,n}$, for the 
action of its discrete subgroup $\GG_{g,n}$ by translations.

The stabilizer of the left coset $f\cdot\ol{\langle\tau_\alpha\rangle}$ for the action of the group 
$\cGG_{g,n}$ on itself by translations is clearly the conjugate subgroup
$f\ol{\langle\tau_\alpha\rangle}f^{-1}=\ol{\langle\tau_{f(\alpha)}\rangle}$.
So, the stabilizer for the action of $\GG_{g,n}$ is the intersection
$\GG_{g,n}\cap\ol{\langle\tau_{f(\alpha)}\rangle}$. By Remark~\ref{separability}, this intersection
equals $\langle\tau_{f(\alpha)}\rangle$, if $f(\alpha)\in\cL$, and is trivial if $f(\alpha)\notin\cL$.

The map $\psi_f\co\ol{T}_{g,n}\ra\tcM/\ol{\langle\tau_\alpha\rangle}$ is identified with the natural map:
$$\ol{T}_{g,n}\times\{f\}\ra\left.\ol{T}_{g,n}\times
(\cGG_{g,n}/\ol{\langle\tau_\alpha\rangle})\right/\GG_{g,n}.$$
Therefore, the map $\psi_f$ is injective for $f(\alpha)\notin\cL$ and factors through the embedding
$\phi_f(\ol{T}_{g,n})/\langle\tau_{f(\alpha)}\rangle\hookra\tcM/\ol{\langle\tau_\alpha\rangle}$,
for $f(\alpha)\in\cL$.

For a given $\beta\in\hL$, let, as above, $\gm\in\cL$ and $h\in\cGG_{g,n}$ be such that 
$h(\gm)=\beta$. For $f\in\cGG_\beta$, the natural map 
$$\ol{T}_\gm\times\{fh\}\ra\left.\ol{T}_{g,n}\times(\cGG_{g,n}/
\ol{\langle\tau_\alpha\rangle})\right/\GG_{g,n}$$ 
is then injective unless $fh(\alpha)\in\cL$ and the subgroup $\langle\tau_{\alpha}\rangle$ 
identifies with the inertia group of a boundary component of the analytic leaf 
$\ol{T}_\gm\times\{fh\}$ of $\td{\delta}_\beta$, in which case, the closed subgroup 
$\ol{\langle\tau_\alpha\rangle}$
identifies with the inertia group of a boundary component of $\td{\delta}_\beta$. 

We conclude that the natural map $\td{\delta}_\beta\ra\tcM/\ol{\langle\tau_\alpha\rangle}$ is either 
injective or factors through an embedding 
$\td{\delta}_\beta/\ol{\langle\tau_\alpha\rangle}\hookra\tcM/\ol{\langle\tau_\alpha\rangle}$, 
where the subgroup $\ol{\langle\tau_\alpha\rangle}$ identifies with the inertia group of a boundary 
component of $\td{\delta}_\beta$. This completes the proof of the theorem.

\end{proof}

Thanks to the above results, sets of commuting powers of profinite Dehn twists in the procongruence 
Teichm\"uller group and their centralizers can be characterized like in the discrete case:

\begin{corollary}\label{centralizer2}For $2g-2+n>0$, it holds:
\begin{enumerate}
\item A set of non-trivial powers of profinite Dehn twists 
$\{\tau_{\gm_0}^{h_0},\ldots,\tau_{\gm_k}^{h_k}\}$ of $\cGG_{g,n}$ 
consists of mutually commuting elements if and only if the set of profinite s.c.c.'s 
$\{\gm_0,\ldots,\gm_k\}$ is a simplex of $L(\hP_{g,n})$.

\item Let be given a $k$-simplex $\sg=\{\gm_0,\ldots,\gm_k\}\in L(\hP_{g,n})$
and $h_0,\ldots,h_k\in\ZZ\ssm\{0\}$. 
Then, the centralizer $Z_{\cGG_{g,n}}(\tau_{\gm_0}^{h_0},\ldots,\tau_{\gm_k}^{h_k})$ 
fits in the exact sequence:
$$1\ra Z_{\cGG_{g,n}}(\tau_{\gm_0}^{h_0},\ldots,\tau_{\gm_k}^{h_k})
\ra\cGG_\sg\ra\Sigma_{\sg},$$
where $\cGG_\sg$ is the stabilizer of the simplex 
$\sg$ for the action of $\cGG_{g,n}$ on $L(\hP_{g,n})$
and  $\Sigma_\sg$ is the group of permutations on the set $\sg=\{\gm_0,\ldots,\gm_k\}$.
\end{enumerate}
\end{corollary}

\begin{proof}The first point follows from Corollary~\ref{centralizer twist}, 
Theorem~\ref{Dehn stabilizer} and induction on $k$. The second point from 
Corollary~\ref{centralizer twist}, Theorem~\ref{stab pro-curves} and induction on $k$.

\end{proof} 

For $\sg=\{\gm_0,\ldots,\gm_k\}$ a $k$-simplex of $L(\hP_{g,n})$, let $\Tau_\sg$ be the closed 
abelian subgroup of $\cGG_{g,n}$ spanned by the profinite Dehn twists 
$\tau_{\gm_0},\ldots,\tau_{\gm_k}$. From Corollary~\ref{centralizer2}, it follows:

\begin{corollary}\label{multi-twists}For $2g-2+n>0$, let $\sg=\{\gm_0,\ldots,\gm_s\}$ and
$\sg'=\{\delta_0,\ldots,\delta_t\}$ be two simplices of $L(\hP_{g,n})$. With the above notations,
if the intersection $\Tau_\sg\cap\Tau_{\sg'}$ is open in $\Tau_\sg$, then it holds $\sg\subseteq\sg'$ 
and hence $\Tau_\sg\cap\Tau_{\sg'}=\Tau_\sg$.
\end{corollary}
\begin{proof}By ii.) Corollary~\ref{centralizer2}, since $\Tau_\sg\cap\Tau_{\sg'}$ is open in 
$\Tau_\sg$, the centralizer of the group $\Tau_{\sg'}$ is contained in that of $\Tau_{\sg}$. But then, 
from i) Corollary~\ref{centralizer2}, it follows that there is an inclusion of simplices $\sg\subseteq\sg'$.

\end{proof}

Let us now compute the normalizer of the closed subgroup of $\cGG_{g,n}$ spanned by a 
set of profinite Dehn twists: 

\begin{theorem}\label{normalizer}For $2g-2+n>0$, let be given $\sg=\{\gm_0,\ldots,\gm_k\}$ 
a simplex of $L(\hP_{g,n})$ and $h_0,\ldots,h_k\in\ZZ\ssm\{0\}$. 
Then, the normalizer in $\cGG_{g,n}$ of the closed subgroup generated by the set of powers of 
Dehn twists $\{\tau_{\gm_0}^{h_0},\ldots,\tau_{\gm_k}^{h_k}\}$ is the stabilizer $\cGG_\sg$
of the simplex $\sg$ for the action of $\cGG_{g,n}$ on the complex of profinite curves 
$L(\hP_{g,n})$.
\end{theorem}

\begin{proof}An element of $\cGG_\sg$ normalizes the closed
subgroup generated by $\tau_{\gm_0}^{h_0},\ldots,\tau_{\gm_k}^{h_k}$ of 
$\cGG_{g,n}$. Let then $f\in\cGG_{g,n}$ be an element which also normalize this
subgroup. For all $0\leq i\leq k$, it then holds
$f\tau_{\gm_i}^{h_i}f^{-1}=\tau_{f(\gm_i)}^{h_i}=
\tau_{\gm_0}^{m_0}\cdot\ldots\tau_{\gm_k}^{m_k},$
for some multindex $(m_0,\ldots,m_k)\in(\ZZ)^{k+1}$.
By the following lemma, it then holds that $f\in\cGG_\sg$.

\begin{lemma}\label{multitwist}For $2g-2+n>0$, let be given $\gm\in\hL$, $h\in\ZZ\ssm{0}$,
$\{\gm_0,\ldots,\gm_k\}\in L(\hP_{g,n})$ a simplex and $h_0,\ldots,h_k\in\ZZ$.
Then, there is an identity $\tau_\gm^h=\tau_{\gm_0}^{h_0}\cdot\ldots\tau_{\gm_k}^{h_k}$,
if and only if, for some $0\leq i\leq k$, it holds $\gm_i=\gm$, $h_i=h$ and $h_j=0$, for all $j\neq i$.
\end{lemma}
\begin{proof}It is easy to prove that, since $\{\tau_{\gm_0}^{h_0},\ldots,\tau_{\gm_k}^{h_k}\}$
is a set of commuting elements, if the given identity holds, each of its elements commutes with
$\tau_\gm^h$. Then, the conclusion of the lemma follows from $i.)$ Corollary~\ref{centralizer2}.

\end{proof}\end{proof}

The results of this section also imply that the simplicial profinite complex $L(\hP_{g,n})$ 
parametrizes all the closed abelian subgroups of $\cGG_{g,n}$ spanned by powers of 
profinite Dehn twists. Let us state this more precisely. 

The set of closed subgroups of a profinite group is the inverse limit of the inverse system formed
by the finite sets of subgroups of its finite discrete quotients. Therefore, it is a profinite set.
Let then $\check{\cS}$ be the profinite set of all closed subgroups of $\cGG_{g,n}$. The 
profinite group $\cGG_{g,n}$ acts continuously by conjugation on $\check{\cS}$.
{\it A weight function on $\hL$} is a $\cGG_{g,n}$-equivariant function 
$w\co\hL\ra\N^+$, where we let $\cGG_{g,n}$ act trivially on $\N^+$. 

Let then $\iota(w)_k\co C(S_{g,n})_k\hookra\check{\cS}$ be the natural embedding 
defined sending a $k$-simplex $\{\gm_0,\ldots,\gm_k\}$ to the closed subgroup 
$\ZZ\cdot\tau_{\gm_0}^{w(\gm_0)}\oplus\ldots\oplus\ZZ\cdot\tau_{\gm_k}^{w(\gm_k)}$ of 
$\cGG_{g,n}$ and let $\ol{C}^w(S_{g,n})_k$ be the closure, in the profinite topology, of the image 
of $\iota(w)_k$ in $\check{\cS}$. 

\begin{proposition}\label{group-theoretic}Let $2g-2+n>0$. For every weight function
$w\co\hL\ra\N^+$ and $0\leq k\leq 3g-3+n$, there is a natural 
$\cGG_{g,n}$-equivariant continuous isomorphism:
$$\Delta^w_k\co L(\hP_{g,n})_k\sr{\sim}{\ra}\ol{C}^w(S_{g,n})_k.$$
\end{proposition}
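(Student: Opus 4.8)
The plan is to show that the continuous, $\cGG_{g,n}$-equivariant surjection $\cT^w_\bt$ is also injective; since each $L(\hP_{g,n})_k$ and each $\ol{C}^w(S_{g,n})_k$ is a profinite, hence compact Hausdorff, set, a continuous equivariant bijection is automatically an isomorphism of simplicial profinite sets. Surjectivity is already guaranteed by the universal property of the completion $L(\hP_{g,n})_\bt$ that produced $\cT^w_\bt$. Because the map is equivariant, to prove bijectivity it suffices to check two things: that $\cT^w_\bt$ induces a bijection between the $\cGG_{g,n}$-orbits on $L(\hP_{g,n})_k$ and on $\ol{C}^w(S_{g,n})_k$, and that for each simplex it carries the stabilizer onto the stabilizer of its image.

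For the orbits, I would use that, by the proof of Theorem~\ref{curve-complex} and Remarks~\ref{top-type}, two simplices of $L(\hP_{g,n})_\bt$ lying in the image of $C(S_{g,n})_\bt$ are in the same $\cGG_{g,n}$-orbit exactly when the complements $S_{g,n}\ssm\sg$ have the same topological type, and every orbit contains such a representative. On the target side, the image $\iota(w)_k(\sg)=\ZZ\cdot\tau_{\gm_0}^{w(\gm_0)}\oplus\dots\oplus\ZZ\cdot\tau_{\gm_k}^{w(\gm_k)}$ is a closed abelian subgroup spanned by powers of profinite Dehn twists, and by Corollary~\ref{profinite twists} together with Theorem~\ref{multi-twists} its $\cGG_{g,n}$-conjugacy class determines, and is determined by, the unordered collection of topological types of its generating twists, hence by the topological type of $\sg$. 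Combined with the surjectivity of $\cT^w_\bt$, this shows that $\cT^w_\bt$ matches orbits bijectively.

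It then remains to identify the stabilizers. The stabilizer of $\sg$ in $L(\hP_{g,n})_\bt$ is $\cGG_\sg$, described by Theorem~\ref{stab pro-curves}. The stabilizer of $\iota(w)_k(\sg)$ in $\ol{C}^w(S_{g,n})$ is, by the definition of the conjugation action on $\check{\cS}$, the normalizer $N_{\cGG_{g,n}}(\iota(w)_k(\sg))$. Since $w$ is $\cGG_{g,n}$-equivariant, any $f\in\cGG_\sg$ sends each generator $\tau_{\gm_i}^{w(\gm_i)}$ to $\tau_{f(\gm_i)}^{\pm w(f(\gm_i))}$, again a generator up to sign, so $\cGG_\sg\leq N_{\cGG_{g,n}}(\iota(w)_k(\sg))$; and Corollary~\ref{centralizers multitwists} gives the reverse inclusion, since it shows that $N_{\cGG_{g,n}}(\ol{\langle\tau_{\gm_0}^{w(\gm_0)},\ldots,\tau_{\gm_k}^{w(\gm_k)}\rangle})=N_{\cGG_{g,n}}(\ol{\langle\tau_{\gm_0},\ldots,\tau_{\gm_k}\rangle})$ equals the closure $\cGG_\sg$ of $\GG_\sg$. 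Hence the two stabilizers coincide, so $\cT^w_\bt$ is injective on each orbit; together with the orbit bijection this yields bijectivity and thus the desired isomorphism.

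I expect the only genuine subtlety to be this stabilizer computation, and specifically the point that introducing the weights $w(\gm_i)$ creates no new symmetries of the subgroup $\iota(w)_k(\sg)$: this is exactly the content of the normalizer identity in Corollary~\ref{centralizers multitwists}, which itself rests on the rigidity of multi-twist decompositions established in Theorem~\ref{multi-twists}. Once that identity is invoked, the remainder is the formal \emph{orbits plus stabilizers} argument already used in the proof of Theorem~\ref{curve-complex}.
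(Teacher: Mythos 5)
Your proposal is correct and follows essentially the same route as the paper: match $\cGG_{g,n}$-orbits via topological type and then identify stabilizers, with the normalizer identity of Corollary~\ref{centralizers multitwists} doing the real work of showing the weights introduce no new symmetries. The only cosmetic difference is that for the orbit comparison on the target side the paper invokes Proposition~\ref{monodromy coefficients2} where you invoke Corollary~\ref{profinite twists} and Theorem~\ref{multi-twists}, which amounts to the same underlying rigidity statement.
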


\begin{proof}The map $\Delta^w_k$ exists by the universal property of the $\cGG_{g,n}$-completion.
If, for two simplices $\sg,\sg'\in L(\hP_{g,n})_k$, it holds $\Delta^w_k(\sg)=\Delta^w_k(\sg')$, 
the intersection $\Tau_\sg\cap\Tau_{\sg'}$ is open in both groups $\Tau_\sg$ and $\Tau_{\sg'}$.
From Corollary~\ref{multi-twists}, it then follows that $\sg=\sg'$.

\end{proof}

By Proposition~\ref{group-theoretic}, for every weight function $w\co\hL\ra\N^+$, it is 
defined a simplicial profinite complex $\ol{C}^w(S_{g,n})$,  endowed with a natural, continuous
$\cGG_{g,n}$-action, naturally isomorphic to $L(\hP_{g,n})$, which we call {\it the 
weighted group-theoretic procongruence curve complex}.
So, the above results sum up in an intrinsic virtual description of 
the complex of profinite curves $L(\hP_{g,n})$.

\begin{theorem}\label{intrinsic}Let $2g-2+n>0$. For any given open normal subgroup 
$\cGG^\ld$ of $\cGG_{g,n}$, the simplicial profinite complex $L(\hP_{g,n})$ is naturally
identified with the simplicial complex whose set of $k$-simplices, for $k\geq 0$, consists 
of primitive free $\ZZ$-modules of rank $k+1$ contained in $\cGG^\ld$, generated by powers 
of profinite Dehn twists.  
\end{theorem}

\begin{remark}\label{profinite curves}Observe that $L(\hP_{g,n})$ is a flag complex, i.e. sets of
vertices which are pairwise joinable are joinable. The barycentric subdivision of  
$L(\hP_{g,n})$ is then more simply described as the flag complex associated to the poset
of closed abelian subgroups of $\cGG_{g,n}$ generated by profinite Dehn twists.
\end{remark}

\section{Faithfulness of Galois representations}\label{Galois}
In this section, for $2g-2+n>0$, we denote by $\cM_{g,n}$ the stack of smooth algebraic curves 
defined over some number field and by $\ccM_{g,n}$ its D--M compactification (in contrast with 
Sections~\ref{levels}--\ref{centralizers}, where we considered only complex algebraic curves). 
Both stacks are defined over $\mathrm{Spec}(\Q)$.

Let $C$ be a smooth $n$-punctured, genus $g$ curve, defined over a number field $\K$. To give 
such a curve, it is equivalent to give a point $\xi\co\mathrm{Spec}(\K)\ra\cM_{g,n}$. If 
$\cC\ra\cM_{g,n}$ denotes the universal $n$-punctured, genus $g$ curve, the curve $C$ is 
isomorphic to the fiber $\cC_\xi$. Let $\ol{\xi}$ be the geometric point of $\cM_{g,n}$ associated to 
$\xi$ and a given embedding $\K\subset\ol{\Q}$, let $\tilde{\xi}$ be a closed point of 
$C\times_\K\ol{\Q}$ (which we identify with $\cC_{\ol{\xi}}$) and let $G_\Q$ be the absolute 
Galois group. There is a commutative exact diagram:

$$\begin{array}{ccccccrr}
&1&&1&&&&\\
&\da&&\da&&&&\\
1\ra&\pi_1(C\times_\K\ol{\Q},\tilde{\xi})&\sr{\sim}{\ra}&\pi_1(C\times_\K\ol{\Q},\tilde{\xi})&\ra&1&\\
&\da&&\da&&\da&&\\
1\ra&\pi_1(\cC\times\ol{\Q},\tilde{\xi})&\ra&\pi_1(\cC,\tilde{\xi})&\ra&G_\Q&\ra 1
&\hspace{0.7cm}(7.1)\\
&\da&&\da&&\,\,\da\!\wr&&\\
1\ra&\pi_1(\cM_{g,n}\times\ol{\Q},\ol{\xi})&\ra&\pi_1(\cM_{g,n},\ol{\xi})&\sr{p}{\ra}&G_\Q&\ra 1&\\
&\da&&\da&&\da&&\\
&1&&1&&1&.&
\end{array}
$$

Let $G_\K$ be the absolute Galois group of $\K$. Then, the group $G_\K$ identifies with an open
subgroup of $G_\Q$ and the algebraic fundamental group $\pi_1(\cM_{g,n}\times\K,\ol{\xi})$ with
$p^{-1}(G_\K)$. 

Moreover, the point $\xi\in\cM_{g,n}$ induces a homomorphism on algebraic fundamental groups
$s_\xi\co G_\K\ra\pi_1(\cM_{g,n}\times\K,\ol{\xi})$ which is a canonical section of the natural 
epimorphism $p\co\pi_1(\cM_{g,n}\times\K,\ol{\xi})\tura G_\K$.

There are various representations associated to the above diagram.
To the middle column, is associated {\it the arithmetic universal monodromy representation}:
$$\mu_{g,n}\co\pi_1(\cM_{g,n},\ol{\xi})\ra\out(\pi_1(C\times_\K\ol{\Q},\tilde{\xi})),\hspace*{2cm}(7.2)$$
which encloses all the others. Composing this representation with the section $s_\xi$, we get indeed 
the outer monodromy representation associated to the curve $C/\K$:
$$\rho_C\co G_\K\ra\out(\pi_1(C\times_\K\ol{\Q},\tilde{\xi})).\hspace*{4cm}(7.3)$$
If, instead, we compose the arithmetic universal monodromy representation with the 
natural monomorphism $\pi_1(\cM_{g,n}\times\ol{\Q},\ol{\xi})\hookra\pi_1(\cM_{g,n},\ol{\xi})$, 
we get {\it the geometric universal monodromy representation}:
$$\ol{\mu}_{g,n}\co\pi_1(\cM_{g,n}\times\ol{\Q},\ol{\xi})\ra
\out(\pi_1(C\times_\K\ol{\Q},\tilde{\xi})).\hspace*{2cm}(7.4)$$

Let us fix an embedding $\ol{\Q}\subset\C$ and let us assume that, with the notations of 
Section~\ref{levels}, it holds $\ol{\xi}=a$ and $\tilde{\xi}=\tilde{a}$. Then, there are natural
isomorphisms $\hGG_{g,n}\cong\pi_1(\cM_{g,n}\times\ol{\Q},\ol{\xi})$,
$\hGG_{g,n+1}\cong\pi_1(\cM_{g,n+1}\times\ol{\Q},\tilde{\xi})$ and 
$\hP_{g,n}\cong\pi_1(C\times_\K\ol{\Q},\tilde{\xi})$, and the geometric universal monodromy 
representation identifies with the profinite universal monodromy representation 
$\hat{\rho}_{g,n}$ introduced at the beginning of Section~\ref{completions}.

In this section, we are going to prove that the representation $(7.3)$ is faithful for all hyperbolic curves 
$C$, thus reproving a result obtained by Matsumoto \cite{Matsu} in the affine case and 
extended, more recently, by Hoshi and Mochizuki \cite{H-M} to the projective case.

Moreover, we will show that the representation $(7.2)$ is faithful if and only the representation $(7.4)$
is faithful, i.e. if the subgroup congruence property holds for $\GG_{g,n}$ (this result appeared as well
in \cite{H-M}). In particular, by Theorem~2.10 in \cite{Hyp2}, it will follow that $\mu_{g,n}$ is faithful for 
$2g-2+n>0$ and $g\leq 2$. Let us give the following definition:

\begin{definition}\label{star condition}Let $\GG'_{g,n}$, for $2g-2+n>0$, be a profinite completion of 
the Teichm\"uller group. We denote by $\aut^\ast(\GG'_{g,n})$ the group of automorphisms of 
$\GG'_{g,n}$ which preserve the set of closed cyclic subgroups of $\GG'_{g,n}$ generated by 
profinite Dehn twists. From the definition of profinite Dehn twists, it follows that 
$\mathrm{Inn}(\GG'_{g,n})\unlhd\aut^\ast(\GG'_{g,n})$. Let then $\out^\ast(\GG'_{g,n})$ 
be the quotient of these two groups.
\end{definition}

Let us identify $\hGG_{g,n}$ with the normal subgroup $\pi_1(\cM_{g,n}\times\ol{\Q},\ol{\xi})$
of $\pi_1(\cM_{g,n},\ol{\xi})$. Restriction of inner automorphisms then defines the representation:
$$\hat{\cG}_{g,n}\co\pi_1(\cM_{g,n},\ol{\xi})\ra\aut^\ast(\hGG_{g,n}).$$

For $2g-2+n>0$, the procongruence Teichm\"uller group $\cGG_{g,n}$ is defined as the image 
of the geometric universal monodromy representation $\ol{\mu}_{g,n}$. Thus, it is a normal 
subgroup of $\Im\,\mu_{g,n}$ and $\pi_1(\cM_{g,n},\ol{\xi})$ acts on it composing the 
representation $\mu_{g,n}$ with the action by conjugation. 
So, we get the natural representation, compatible with $\hat{\cG}_{g,n}$:
$$\check{\cG}_{g,n}\co\pi_1(\cM_{g,n},\ol{\xi})\ra\aut^\ast(\cGG_{g,n}).$$

We will see that, thanks to the results of sections \ref{profinite twists} and \ref{centralizers}, 
the representation $\check{\cG}_{g,n}$, while 
containing all the arithmetic information of the representation $\hat{\cG}_{g,n}$, is
much more treatable. For the moment, let us observe that, by lifting elements of $G_\Q$ to 
$\pi_1(\cM_{g,n},\ol{\xi})$, we get also natural compatible representations, for $2g-2+n>0$:
$$\hat{b}_{g,n}\co G_\Q\ra\out^\ast(\hGG_{g,n})\hspace{1cm}\mbox{and}\hspace{1cm}
\check{b}_{g,n}\co G_\Q\ra\out^\ast(\cGG_{g,n}).$$

By the characterization given in Theorem~\ref{intrinsic} of the complex of profinite curves 
$L(\hP_{g,n})$, there is a natural continuous representation of $\aut^\ast(\cGG_{g,n})$ into 
$\aut(L(\hP_{g,n}))$, the group of continuous automorphisms of the simplicial profinite complex 
$L(\hP_{g,n})$. This representation is faithful:

\begin{theorem}\label{faithful}For $2g-2+n>0$, there is a natural faithful representation:
$$\aut^\ast(\cGG_{g,n})\hookra\aut(L(\hP_{g,n})).$$
\end{theorem}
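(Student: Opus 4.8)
The plan is to construct the homomorphism from the intrinsic description of $L(\hP_{g,n})$ furnished by Theorem~\ref{intrinsic} and then to prove injectivity by a rigidity argument on profinite Dehn twists. By Theorem~\ref{intrinsic} and Remark~\ref{profinite curves}, $L(\hP_{g,n})$ is the flag complex whose vertices are the closed cyclic subgroups $\ol{\langle\tau_\gm\rangle}$ generated by profinite Dehn twists and whose simplices are the closed, primitive, free abelian subgroups they span. Any $\Theta\in\aut^\ast(\cGG_{g,n})$ permutes, by definition, the cyclic subgroups generated by profinite Dehn twists; being a group automorphism, it preserves commutativity, primitivity and rank, hence carries simplices to simplices. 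Thus $\Theta$ induces a continuous simplicial automorphism $\Theta_\ast$ of $L(\hP_{g,n})$, and $\Theta\mapsto\Theta_\ast$ is the asserted natural homomorphism $\aut^\ast(\cGG_{g,n})\ra\aut(L(\hP_{g,n}))$.

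It then remains to prove injectivity, which I would reduce to pinning down a system of units. Suppose $\Theta$ lies in the kernel, so it fixes every vertex of $L(\hP_{g,n})$; by Corollary~\ref{profinite twists} this means $\Theta(\tau_\gm)=\tau_\gm^{u_\gm}$ for a unit $u_\gm\in\ZZ^\ast$ and every $\gm\in\hat{\cL}$. First I would show that $u_\gm$ depends only on the topological type of $\gm$: if $\tau_\delta=g\tau_\gm g^{-1}$, then $\cGG_{g,n}$-equivariance of $\hat{d}$ gives $\Theta(\tau_\delta)=\tau_{\Theta(g)\cdot\gm}^{u_\gm}$, while $\Theta(\tau_\delta)=\tau_\delta^{u_\delta}$, so the single-twist case of Theorem~\ref{multi-twists} forces $u_\delta=u_\gm$. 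By Remarks~\ref{top-type}, the $\cGG_{g,n}$-orbits in $\hat{\cL}$ are exactly the topological types, so $u$ is a type-indexed system of units.

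The heart of the proof is to show $u_\gm=1$ for every $\gm$. For this I would use the conjugation relation $\tau_\delta\tau_\gm\tau_\delta^{-1}=\tau_{\tau_\delta(\gm)}$, valid in $\GG_{g,n}$ and hence in $\cGG_{g,n}$, chosen with $i(\gm,\delta)\neq 0$. Since $\tau_\delta(\gm)$ has the same topological type as $\gm$, applying $\Theta$ and using $u_{\tau_\delta(\gm)}=u_\gm$ gives $\tau_\delta^{u_\delta}\tau_\gm^{u_\gm}\tau_\delta^{-u_\delta}=\tau_\delta\tau_\gm^{u_\gm}\tau_\delta^{-1}$, i.e. $\tau_\delta^{\,u_\delta-1}$ centralizes $\tau_\gm$. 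By Corollary~\ref{centralizers multitwists}, $Z_{\cGG_{g,n}}(\tau_\gm)$ coincides with the $\cGG_{g,n}$-stabilizer of the vertex $\gm$, so $\tau_\delta^{\,u_\delta-1}$ fixes $\gm$. To convert this into $u_\delta=1$ I would pass, as in the proof of Theorem~\ref{multi-twists}, to a finite cover $p_K\co S_K\ra S_{g,n}$ on which $\gm$ and $\delta$ lift to non-separating curves whose homological intersection pairing is a unit (arranged via Proposition~\ref{lax} and Theorem~\ref{smooth covers}), and compare images under the symplectic representation $\cGG^{K,(m)}\ra\Sp(H_1(\ol{S}_K,\ZZ))$. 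There $\tau_\gm$ and $\tau_\delta$ map to products of symplectic transvections $t_v,t_w$, and the commutation of $t_w^{\,u_\delta-1}$ with $t_v$ forces $(u_\delta-1)\langle v,w\rangle=0$, whence $u_\delta=1$ in $\ZZ$. Letting $\delta$ range over representatives of all topological types (each admitting some $\gm$ with $i(\gm,\delta)\neq 0$) gives $u_\gm=1$ for all $\gm$, and since the profinite Dehn twists topologically generate $\cGG_{g,n}$ we conclude $\Theta=\mathrm{id}$.

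The main obstacle is this final rigidity step: passing from the information that $\tau_\delta^{\,u_\delta-1}$ stabilizes $\gm$ to the exact equality $u_\delta=1$ in $\ZZ$, uniformly over all topological types. For non-separating $\gm,\delta$ with $i(\gm,\delta)=1$ this is immediate from the symplectic picture, but for separating curves and, especially, in genus zero one cannot read the intersection off $H_1(S_{g,n})$ directly and must descend to a suitable finite cover where the curves become non-separating and their homological intersection is a unit, which is precisely what the cover-and-symplectic machinery of Section~\ref{levels}, Proposition~\ref{lax} and Theorem~\ref{smooth covers} provide.
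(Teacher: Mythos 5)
Your construction of the homomorphism, the reduction of injectivity to $\Theta(\tau_\gm)=\tau_\gm^{u_\gm}$ with $u_\gm\in\ZZ^\ast$, and the observation (via conjugation and the single-twist case of Theorem~\ref{multi-twists}) that $u_\gm$ only depends on the topological type of $\gm$ all match the paper's setup, as does the derivation that $\tau_\delta^{\,u_\delta-1}$ centralizes $\tau_\gm$. The gap is in the final rigidity step. You claim that Proposition~\ref{lax} and Theorem~\ref{smooth covers} supply a cover $p_K\co S_K\ra S_{g,n}$ on which lifts of $\gm$ and $\delta$ have \emph{unit homological intersection pairing}; neither result says anything of the sort. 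Proposition~\ref{lax} controls only the submodules of $H_1(\ol{S}_K,\ZZ)$ generated by lifts of a \emph{single} curve (that the cyclic submodules attached to distinct profinite s.c.c.'s meet trivially), and Theorem~\ref{smooth covers} is about absence of cut pairs and smoothness of Looijenga levels. Moreover, on any nontrivial cover $\tau_\gm^{mc_\gm}$ maps to a \emph{multi}-transvection along all components of $p_K^{-1}(\gm)$, so the commutation computation you sketch for a single pair $t_v,t_w$ must be redone for products $\prod_i t_{w_i}^{a}$ and $\prod_j t_{v_j}^{b}$, where individual components of $p_K^{-1}(\gm)$ and $p_K^{-1}(\delta)$ can perfectly well have zero algebraic intersection even when $i(\gm,\delta)\neq 0$. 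As written, the step ``$\tau_\delta^{\,u_\delta-1}$ centralizes $\tau_\gm$ $\Rightarrow$ $u_\delta=1$'' is not established, and you are also making the problem harder than necessary by insisting on all topological types of $\delta$ when a single generating type suffices.

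The paper's proof avoids all of this. It fixes \emph{one} topological type whose Dehn twists topologically generate $\cGG_{g,n}$ (non-separating for $g\geq 1$; bounding a twice-punctured disc for $g=0$), takes $\gm_0,\gm_1$ of that type meeting once (resp.\ twice), and applies $\phi$ to the braid relation $\tau_{\gm_0}\tau_{\gm_1}\tau_{\gm_0}^{-1}=\tau_{\gm_1}\tau_{\gm_0}\tau_{\gm_1}^{-1}$. Theorem~\ref{multi-twists} then forces $\alpha_{\gm_0}=\alpha_{\gm_1}=u$ and $\tau_{\gm_0}^{u}(\gm_1)=\tau_{\gm_1}^{u}(\gm_0)$ in $\hL$; since for intersection number one these profinite s.c.c.'s can be written explicitly as words ($\gm_0\cdot\gm_1^{u}$ versus $\gm_1\cdot\gm_0^{u}$ up to conjugation and inversion in $\hP_{g,n}$), comparing classes in $H_1(S_{g,n},\ZZ)$ --- the homology of the \emph{punctured} surface, which is free and requires no symplectic form and no cover --- yields $u=1$. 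If you want to salvage your route, note that for $g\geq 1$ your transvection computation already closes on $H_1(S_g,\ZZ)$ itself, since two non-separating curves meeting once have algebraic intersection $\pm 1$ there; the genuinely problematic case is $g=0$, where the relevant curves are nullhomologous in $\ol{S}_0$ and your cover-based fix is exactly the part that is unsupported.
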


\begin{proof}Let $\phi\co\cGG_{g,n}\ra\cGG_{g,n}$ be an automorphism such that it holds 
$\phi(\ol{\langle\tau_\gm\rangle})=\ol{\langle\tau_\gm\rangle}$, 
for every profinite Dehn twist $\tau_\gm\in\cGG_{g,n}$, i.e., for every
$\gm\in L(\hP_{g,n})_0$, it holds $\phi(\tau_\gm)=\tau_\gm^{\alpha_\gm}$, for some 
$\alpha_\gm\in\ZZ^\ast$. Let us then show that $\phi$ is the identity.

Let $\gm_0$, for $g\geq 1$ (resp. $g=0$), be a non-separating s.c.c. (resp. a s.c.c. bounding
a disc with two punctures) on $S_{g,n}$. The associated Dehn twists generate topologically 
$\cGG_{g,n}$. So, if we prove that $\phi(\tau_{\gm_0})=\tau_{\gm_0}$, it will follow that $\phi$
is the identity.

Let then $\gm_1$, for $g\geq 1$ (resp. $g=0$), be a s.c.c. of the same type of $\gm_0$ and 
which intersects $\gm_0$, geometrically, only once (resp. only twice). The corresponding Dehn 
twists then satisfy the braid relation 
$\tau_{\gm_0}\tau_{\gm_1}\tau_{\gm_0}^{-1}=\tau_{\gm_1}^{-1}\tau_{\gm_0}\tau_{\gm_1}$.
Applying the automorphism $\phi$, we get the identity:
$$\tau_{\gm_0}^{\alpha_{\gm_0}}\tau_{\gm_1}^{\alpha_{\gm_1}}\tau_{\gm_0}^{-\alpha_{\gm_0}}=
\tau_{\gm_1}^{-\alpha_{\gm_1}}\tau_{\gm_0}^{\alpha_{\gm_0}}\tau_{\gm_1}^{\alpha_{\gm_1}},$$
or, equivalently:
$$\tau_{\tau_{\gm_0}^{\alpha_{\gm_0}}(\gm_1)}^{\alpha_{\gm_1}}=
\tau_{\tau_{\gm_1}^{-\alpha_{\gm_1}}(\gm_0)}^{\alpha_{\gm_0}}.$$
By Remark~\ref{separability}, it holds $\alpha_{\gm_0}=\alpha_{\gm_1}$ 
(let us then call $u$ this element) 
and $\tau_{\gm_0}^{u}(\gm_1)=\tau_{\gm_1}^{-u}(\gm_0)$ in $\hL$. 
For suitable lifts $\td{\gm}_0$ and $\td{\gm}_1\in\cL^\ast_{g,n}$ of  
$\gm_0$ and $\gm_1$, respectively, the loop $\td{\gm}_1\cdot\td{\gm}_0^{u}$ 
is then conjugated in $\hP_{g,n}$ either to the loop $\td{\gm}_0\cdot\td{\gm}_1^{u}$ or to the loop
$\td{\gm}_1^{-u}\cdot\td{\gm}_0^{-1}$. But now, since the classes of $\td{\gm}_0$ and $\td{\gm}_1$ 
in the homology group $H_1(S_{g,n},\ZZ)$ are linearly independent, this is possible only for $u=1$.

\end{proof}

For an open normal subgroup $\cGG^\ld$ of $\cGG_{g,n}$, let us define $\aut^\ast(\cGG^\ld)$
to be the group of automorphisms of $\cGG^\ld$ which preserve the set of primitive closed cyclic 
subgroups generated by powers of profinite Dehn twists. 

\begin{corollary}\label{faithful2}For $2g-2+n>0$, let $\cGG^\ld$ be a characteristic open 
subgroup of $\cGG_{g,n}$. Then, the natural representation 
$\aut^\ast(\cGG_{g,n})\ra\aut^\ast(\cGG^\ld)$, induced by the restriction of automorphisms, is faithful. 
\end{corollary}
\begin{proof}By the characterization given in Theorem~\ref{intrinsic} of the complex of profinite curves 
$L(\hP_{g,n})$, this representation factorizes the faithful representation of Theorem~\ref{faithful}:
$$\aut^\ast(\cGG_{g,n})\ra\aut^\ast(\cGG^\ld)\ra\aut(L(\hP_{g,n})).$$
Hence, it is faithful as well.

\end{proof}

The next theorem is a consequence of the description of centralizers of Dehn twists given in 
Section~\ref{centralizers}. It is also the key to implement the Grothendieck-Teichm\"uller 
Lego in higher genus, on which all the following faithfulness results of this section are based.

\begin{theorem}\label{outer}Let $(g,n)$ and $(g',n')$ be such that $2g-2+n>0$, $2g'-2+n'>0$, 
$g\geq g'$ and $3g-3+n> 3g'-3+n'$. Then, there is a natural $G_\Q$-equivariant homomorphism:
$$\out^\ast(\cGG_{g,n})\ra\out^\ast(\cGG_{g',n'}).$$
\end{theorem}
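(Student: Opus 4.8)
The plan is to realize $\cGG_{g',n'}$ as a canonical subquotient of $\cGG_{g,n}$ attached to a boundary stratum, and to let an element of $\out^\ast(\cGG_{g,n})$ act on this subquotient through its action on the profinite curve complex. First I would use the hypotheses $g\geq g'$ and $3g-3+n>3g'-3+n'$ to choose a simplex $\sg=\{\gm_0,\ldots,\gm_k\}$ of $C(S_{g,n})$ such that exactly one connected component of $S_{g,n}\ssm\sg$ is homeomorphic to $S_{g',n'}$; such a subsurface exists by elementary surface topology, and in general one reaches it by composing several cuts, each lowering $3g-3+n$ by one. The extreme case $n'=0$, where no component of a cut surface can be closed, is instead obtained from the exact sequence $1\ra\hP_{g',0}\ra\cGG_{g',1}\ra\cGG_{g',0}\ra 1$ of Corollary~\ref{geo unique}. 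By the second exact sequence of Corollary~\ref{centralizers multitwists}, the stabilizer $\cGG_{\vec{\sg}}$ surjects onto $\cGG_{g_0,n_0}\times\cdots\times\cGG_{g_h,n_h}$; composing with the projection onto the distinguished factor yields a continuous surjection $q\co\cGG_{\vec{\sg}}\tura\cGG_{g',n'}$ which sends each profinite Dehn twist along a s.c.c.\ lying inside the $S_{g',n'}$-component to the corresponding profinite Dehn twist of $\cGG_{g',n'}$.

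Given $\ol\phi\in\out^\ast(\cGG_{g,n})$, I would lift it to $\phi\in\aut^\ast(\cGG_{g,n})$ and use Theorem~\ref{faithful} to obtain the induced simplicial automorphism $\phi_\bullet$ of $L(\hP_{g,n})$. Assuming $\phi_\bullet$ preserves the topological type of $\sg$, the simplices $\sg$ and $\phi_\bullet(\sg)$ lie in the same $\cGG_{g,n}$-orbit (by the orbit description in the proof of Theorem~\ref{curve-complex}); modifying $\phi$ by a suitable inner automorphism, which does not change $\ol\phi$, we may assume that $\phi$ fixes $\sg$ and carries the distinguished $S_{g',n'}$-component to itself. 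Then $\phi$ preserves $\cGG_{\vec{\sg}}$ and the kernel of $q$, hence descends along $q$ to an automorphism of $\cGG_{g',n'}$, which lies in $\aut^\ast(\cGG_{g',n'})$ because $q$ carries profinite Dehn twists to profinite Dehn twists. The residual ambiguity — the choice of lift $\phi$ and of the inner automorphism used to restore $\sg$, both of which range over $\cGG_{\vec{\sg}}$, whose $q$-image is all of $\cGG_{g',n'}$ — alters the descended automorphism only by an inner automorphism. Thus $\ol\phi\mapsto\ol{q(\phi)}$ is a well-defined map $\out^\ast(\cGG_{g,n})\ra\out^\ast(\cGG_{g',n'})$, and it is a homomorphism since conjugations and descents compose.

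The $G_\Q$-equivariance is the assertion that this map sends $\check b_{g,n}(\sigma)$ to $\check b_{g',n'}(\sigma)$ for all $\sigma\in G_\Q$. This holds because both representations $\check b$ arise by conjugation from a lift of $G_\Q$ inside the arithmetic fundamental group, the simplex $\sg$ corresponds to a boundary stratum $\ccM_{g_0,n_0}\times\cdots\times\ccM_{g_h,n_h}$ of $\ccM_{g,n}$ defined over $\Q$ (Knudsen's morphisms being defined over $\Spec(\Q)$), and the Galois action on $\cGG_{g,n}$, restricted to the decomposition group of this stratum and then pushed through $q$, is precisely the Galois action defining $\check b_{g',n'}$. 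The $\ast$-condition guarantees that the inertia structure, and hence the whole recipe, is respected by $G_\Q$.

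The main obstacle is the well-definedness input used silently above, namely that $\phi_\bullet$ preserves topological types of simplices of $L(\hP_{g,n})$, equivalently that an automorphism in $\aut^\ast(\cGG_{g,n})$ maps the stabilizer $\cGG_{\vec{\sg}}$ of a simplex to the stabilizer of a simplex of the same type. I would extract this from the two tools already available: Theorem~\ref{multi-twists}, which rigidifies the relations among commuting profinite Dehn twists so that $\phi$ cannot alter the combinatorial (dual-graph) type of a multitwist configuration, and Corollary~\ref{centralizers multitwists}, which presents $\cGG_{\vec{\sg}}$ as an extension of $\prod_i\cGG_{g_i,n_i}$ by the twist lattice $\bigoplus_i\ZZ$. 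Since $\phi$ transports this presentation $\ast$-equivariantly, the multiset $\{(g_i,n_i)\}$, and thus the topological type, is recovered inductively, the base of the induction being that the groups $\cGG_{g',n'}$ are pairwise non-isomorphic. Making this recovery rigorous, together with checking that the distinguished factor is canonically singled out once the component decomposition is fixed, is the delicate part of the argument.
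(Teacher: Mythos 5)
Your overall strategy --- cut $S_{g,n}$ down to $S_{g',n'}$, restrict an automorphism to the stabilizer of the cutting simplex, and descend through the exact sequences of Corollary~\ref{centralizers multitwists} --- is the same as the paper's, and your treatment of well-definedness modulo inner automorphisms and of the case $n'=0$ via $1\ra\hP_{g'}\ra\cGG_{g',1}\ra\cGG_{g'}\ra 1$ matches items $i.)$--$iii.)$ of the actual proof. The problem is the step you yourself flag as ``the delicate part'': that an element of $\aut^\ast(\cGG_{g,n})$ preserves the topological type of the simplex $\sg$ and of the distinguished component of $S_{g,n}\ssm\sg$. Your proposed route --- recovering the multiset $\{(g_i,n_i)\}$ from the abstract structure of $\cGG_{\vec{\sg}}$ as an extension of $\prod_i\cGG_{g_i,n_i}$ by the twist lattice, using that ``the groups $\cGG_{g',n'}$ are pairwise non-isomorphic'' --- is not available: that non-isomorphism statement is nowhere proved in the paper (and is far from obvious for these geometric profinite completions), and even granting it you would still have to show that $\phi$ respects the product decomposition of $\cGG_{\vec{\sg}}/\bigoplus_i\ZZ\cdot\tau_{\gm_i}$, i.e.\ does not transport Dehn twists supported on one component to Dehn twists supported on another; otherwise there is no canonical ``distinguished factor'' for your map $q$ to project onto. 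This is a genuine gap, not a routine verification.

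The paper's proof is engineered precisely to avoid this. It never proves that topological types of arbitrary simplices are preserved; Lemma~\ref{prestype} establishes only that $\aut^\ast(\cGG_{g,n})$ preserves the classes of non-separating profinite s.c.c.'s and of profinite s.c.c.'s bounding a twice-punctured disc, and it does so by a quite different mechanism: a connectivity property of chains of non-commuting profinite Dehn twists inside the centralizer $Z_{\cGG_{g,n}}(\tau_\gm)$ (which singles out the union $\hL_0\cup\hL_1$), followed by a comparison of conjugacy classes and of commuting pairs within them (which separates $\hL_0$ from $\hL_1$). The general homomorphism is then obtained by composing three elementary moves --- cut along a non-separating curve, cut along a curve bounding a disc with two punctures, and fill in the last puncture --- and for the first two of these the complement has essentially a single component carrying a non-trivial mapping class group, so the ``which factor'' ambiguity never arises. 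Even the third move requires a separate argument (preservation of the topological type of cut pairs bounding a punctured cylinder, deduced by applying Lemma~\ref{prestype} inside $\cGG_{\vec{\gm}_0}$) that your bare appeal to Corollary~\ref{geo unique} does not supply. To repair your write-up, replace the single multi-curve cut by a chain of one-curve cuts of the two privileged types and prove type-preservation only for those.
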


\begin{proof}By Theorem~\ref{faithful}, the natural representation 
$\aut^\ast(\cGG_{g,n})\ra\aut(L(\hP_{g,n}))$ is faithful. A natural guess then is that  
the action of $\aut^\ast(\cGG_{g,n})$ on the complex of profinite curves also preserves
the topological type of the simplices. We actually need and we will prove here a 
slightly weaker assertion:

\begin{lemma}\label{prestype}For $2g-2+n>0$, the action of $\aut^\ast(\cGG_{g,n})$ on $\hL$ 
preserves the topological type of non-separating profinite s.c.c.'s and of profinite s.c.c.'s 
bounding a disc with two punctures.
\end{lemma}

\begin{proof}Let us denote by $\hL_0$ the $\cGG_{g,n}$-orbit in $\hL$ consisting of non-separating 
profinite s.c.c.'s and, for $n\geq 2$, by $\hL_1$ the $\cGG_{g,n}$-orbit in $\hL$ consisting of profinite 
s.c.c.'s bounding a disc with two punctures. Let us show first that the action of $\aut^\ast(\cGG_{g,n})$ 
on $\hL$ preserves the union $\hL_0\cup\hL_1$.

A profinite s.c.c. $\gm\in\hL$ is in the $\cGG_{g,n}$-orbit of a discrete one and,
for all $f\in\aut^\ast(\cGG_{g,n})$, there is an isomorphism:
$$Z_{\cGG_{g,n}}(\tau_\gm)\cong Z_{\cGG_{g,n}}(\tau_{f(\gm)}).$$

By Corollary~\ref{centralizer2}, any pair of commuting profinite Dehn twists in 
$Z_{\cGG_{g,n}}(\tau_\gm)$, for $\gm\in\cL$ discrete, is conjugated, inside this group, 
to a pair of commuting discrete Dehn twists.

Therefore, the centralizer of a profinite Dehn twist 
$\tau_\gm$, for $\gm\in\hL_0\cup\hL_1$, is characterized by
the property that, for any pair $\tau_\alpha,\tau_\beta\in Z_{\cGG_{g,n}}(\tau_\gm)$ of commuting 
profinite Dehn twists distinct from $\tau_\gm$, there is a profinite Dehn twist $\tau_{\delta}$ in the 
group $Z_{\cGG_{g,n}}(\tau_\gm)$ which commutes neither with
$\tau_\alpha$ nor with $\tau_\beta$. 

The above argument already proves the lemma for $n\leq 1$. So, let us assume $n\geq 2$.
We need only to prove that, for any $\gm\in\hL_0$ and all $f\in\aut^\ast(\cGG_{g,n})$, it holds 
$f(\gm)\notin\hL_1$. 

Let us observe that the set of non-separating profinite Dehn twists forms a single
conjugacy class in $\cGG_{g,n}$, containing at least two distinct commuting elements for $n\geq 2$.
On the other hand, to each pair of punctures on $S_{g,n}$, corresponds a distinct conjugacy class in 
$\cGG_{g,n}$ of profinite Dehn twists along profinite s.c.c.'s bounding a disc with two punctures and 
none of these conjugacy classes contains a pair of distinct commuting elements.

Every automorphism $f$ of $\cGG_{g,n}$ sends conjugacy classes to conjugacy classes and pairs
of commuting elements to pairs of commuting elements.  Therefore, $\gm\in\hL_0$ implies
$f(\gm)\notin\hL_1$. 

\end{proof}  

In order to prove Theorem~\ref{outer}, let us split it in the three assertions:
\begin{enumerate} 
\item for $g\geq 1$, there is a natural homomorphism 
$\out^\ast(\cGG_{g,n})\ra\out^\ast(\cGG_{g-1,n+2})$;
\item for $n\geq 2$, there is a natural homomorphism 
$\out^\ast(\cGG_{g,n})\ra\out^\ast(\cGG_{g,n-1})$; 
\item for $g\geq 2$, there is a natural homomorphism 
$\out^\ast(\cGG_{g,1})\ra\out^\ast(\cGG_g)$. 
\end{enumerate}
Galois equivariance then follows from the naturality of the above homomorphisms.

In order to prove i.), let us choose an orientation for every element of $\hat{\cL}$ and let us fix
a non-separating s.c.c. $\gm$ on $S_{g,n}$. By Lemma~\ref{prestype}, for any given 
$\tilde{f}\in\aut^\ast(\cGG_{g,n})$, there is an $h\in\cGG_{g,n}$ such that 
$\tilde{f}(\tau_{\gm})=\inn(h)(\tau_{\gm})$. Moreover, since, for a non-separating s.c.c. 
$\gm$ on $S_{g,n}$, there is a homeomorphism preserving $\gm$ but switching its orientations, 
we can choose $h$ such that $h(\vec{\gm})$ has the fixed orientation. 

Therefore, given an $f\in\out^\ast(\cGG_{g,n})$, there is a lift $\tilde{f}\in\aut^\ast(\cGG_{g,n})$ such 
that $\tilde{f}(\tau_\gm)=\tau_\gm$. By Theorem~\ref{Dehn stabilizer},  the restriction $\tilde{f}_\gm$ 
of the automorphism $\tilde{f}$ to the subgroup $\cGG_\gm=Z_{\cGG_{g,n}}(\tau_\gm)$ preserves as 
well the set of closed cyclic groups generated by the profinite Dehn twists of $\cGG_\gm$, i.e. the 
closure of the set of closed cyclic groups generated by the Dehn twists of $\GG_\gm$. 

From the description of $\cGG_\gm$ given in Corollary~\ref{centralizer twist},  
it follows that $\cGG_{\vec{\gm}}$ is the closed normal subgroup of $\cGG_\gm$
topologically generated by the Dehn twists contained in there. Hence, the 
automorphism $\tilde{f}_\gm$ can be further restricted to the subgroup $\cGG_{\vec{\gm}}$. 

By Theorem~\ref{stab pro-curves} and 
Corollary~\ref{center-free}, there is a natural isomorphism 
$\cGG_{\vec{\gm}}/Z(\cGG_{\vec{\gm}})\cong\cGG_{g-1,n+2}$.
It then follows that $\tilde{f}_\gm$ induces an automorphism $\ol{f}_\gm$ of $\cGG_{g-1,n+2}$.

Let $\tilde{f}'\in\aut^\ast(\cGG_{g,n})$ be another lift of $f$ with the same properties and let 
$\ol{f}'_\gm$ be the automorphism it induces on $\cGG_{g-1,n+2}$. We then have to prove 
that $\ol{f}_\gm$ and $\ol{f}'_\gm$ differ by an inner automorphism of $\cGG_{g-1,n+2}$. 

By definition, the product $\tilde{f}^{-1}\tilde{f}'$ is an inner automorphism $\inn(x)$ of 
$\cGG_{g,n}$, such that $\inn(x)(\tau_\gm)=\tau_\gm$. By Corollary~\ref{centralizer twist}, 
$x$ stabilizes $\gm$ but then also preserves its chosen orientation. Therefore, it holds
$x\in\cGG_{\vec{\gm}}$ and the claim follows.

Item ii.) is proved with a similar argument to that used for i.), where we let $\gm$ be instead a s.c.c. 
on $S_{g,n}$ bounding a two-punctured disc.

Let us now prove item iii.). 
We just need to show that any element of $\aut^\ast(\cGG_{g,1})$ preserves the normal subgroup 
$\hP_g$ of $\cGG_{g,1}$. As a subgroup of $\cGG_{g,1}$, the group $\hP_g$ 
is topologically generated by products of Dehn twists of the form $\tau_{\gm_0}\tau_{\gm_1}^{-1}$, 
where $\{\gm_0,\gm_1\}$ is a cut pair on $S_{g,1}$ consisting of non-separating s.c.c.'s bounding 
a cylinder containing the puncture. Therefore, it is enough to prove that elements of 
$\aut^\ast(\cGG_{g,1})$ preserve the topological type of the $1$-simplices of $L(\hP_{g,1})$ 
corresponding to cut pairs on $S_{g,1}$ of the above type.

So, given such a cut pair $\{\gm_0,\gm_1\}$ on $S_{g,1}$ and any $f\in\aut^\ast(\cGG_{g,1})$, let us 
show that $\{f(\gm_0),f(\gm_1)\}$ has the same topological type. In order to prove this assertion, we 
are allowed to modify the given $f$ by an inner automorphism of $\cGG_{g,1}$. 

Since $\gm_0$ and $\gm_1$ are non-separating, by Lemma~\ref{prestype} and the fact that
non-separating profinite s.c.c.'s form a single conjugacy class in $\cGG_{g,1}$, we
may then assume that $f(\gm_0)=\gm_0$, that $f$ restricts to an automorphism of 
$\cGG_{\vec{\gm}_0}$ and that $f(\gm_1)$ is non-separating.

The s.c.c. $\gm_1$ determines on $S_{g,1}\ssm\gm_0$ a separating s.c.c. bounding a disc 
with two punctures, one of which is the only puncture on $S_{g,1}$ 
and the other is one of the two determined by the cutting along $\gm_0$. 
Then, in virtue of Lemma~\ref{prestype} applied to the group $\aut^\ast(\cGG_{\vec{\gm}_0})$, the 
profinite s.c.c. $f(\gm_1)$ bounds also a disc containing two punctures on the cut surface 
$S_{g,1}\ssm\gm_0$. Now, one of these punctures must be the one coming from $S_{g,1}$, 
otherwise $f(\gm_1)$ would bound a genus $1$ subsurface on $S_{g,1}$. 
Therefore, $\{f(\gm_0),f(\gm_1)\}$ is a cut pair of the same topological type of $\{\gm_0,\gm_1\}$.

\end{proof}

\begin{corollary}\label{galois-faithful}For $2g-2+n>0$ and $3g-3+n>0$, the natural
representation $\check{b}_{g,n}\co G_\Q\ra\out^\ast(\cGG_{g,n})$ is faithful. In particular, the 
representation $\hat{b}_{g,n}\co G_\Q\ra\out^\ast(\hGG_{g,n})$ is faithful as well.
\end{corollary}

\begin{proof}For $(g,n)=(0,4),(1,1)$ the above statement is well known and a direct consequence 
of Bely\v{\i}'s Theorem (cf. \cite{Belyi}). The general case then follows, since, by 
Theorem~\ref{outer}, for $3g-3+n>1$, there is a natural $G_\Q$-equivariant homomorphism 
$\out^\ast(\cGG_{g,n})\ra\out^\ast(\cGG_{0,4})$.

\end{proof}

As an almost immediate consequence, we get the most important faithfulness result of this section:

\begin{theorem}\label{outer-galois}Let $C$ be a hyperbolic curve defined over a number field $\K$.
Then, the associated outer Galois representation 
$\rho_C\co G_\K\ra\out(\pi_1(C\times_\K\ol{\Q},\tilde{\xi}))$ 
is faithful.
\end{theorem}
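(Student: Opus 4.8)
The plan is to reduce the faithfulness of $\rho_C$ to that of the outer Galois action $\check{b}_{g,n}\co G_\Q\hookra\out^\ast(\cGG_{g,n})$ already established in Corollary~\ref{galois-faithful}; once that reduction is set up, the statement is essentially a formal diagram chase on $(\bigstar)$. The two structural inputs I would use are the factorisation $\rho_C=\mu_{g,n}\circ s_\xi$ recorded just after $(\bigstar)$, and the construction of $\check{\cG}_{g,n}$ (and hence of $\check{b}_{g,n}$) out of $\mu_{g,n}$ by letting the image act on its normal subgroup $\cGG_{g,n}\lhd\Im\,\mu_{g,n}$ by conjugation.

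First I would fix the type $(g,n)$ of $C$ and set aside the unique exceptional hyperbolic type $(0,3)$, the only one with $3g-3+n=0$ and to which Corollary~\ref{galois-faithful} does not apply. This type is affine, so its faithfulness is already contained in Matsumoto's theorem \cite{Matsu}; alternatively $\hP_{0,3}$ is free profinite of rank $2$ and $\rho_C$ is, up to a finite base change rationalising the punctures, the restriction of the classical faithful representation $G_\Q\hookra\out(\hP_{0,3})$ of Bely\v{\i}'s theorem \cite{Belyi}. For every remaining hyperbolic type one has $3g-3+n>0$, so Corollary~\ref{galois-faithful} is available.

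The core argument is then the following. Take $\sigma\in\ker\rho_C$. Since $\rho_C(\sigma)=\mu_{g,n}(s_\xi(\sigma))=1$, the element $s_\xi(\sigma)\in\pi_1(\cM_{g,n},\ol{\xi})$ lies in $\ker\mu_{g,n}$. By construction $\check{\cG}_{g,n}(g)$ is conjugation by $\mu_{g,n}(g)$ restricted to $\cGG_{g,n}$, so $\ker\mu_{g,n}\subseteq\ker\check{\cG}_{g,n}$ and in particular $\check{\cG}_{g,n}(s_\xi(\sigma))$ is the identity of $\aut^\ast(\cGG_{g,n})$. Because $s_\xi$ is a section of $p$, the element $s_\xi(\sigma)$ is a lift of $\sigma\in G_\K\subseteq G_\Q$ to $\pi_1(\cM_{g,n},\ol{\xi})$, so the very definition of $\check{b}_{g,n}$ yields $\check{b}_{g,n}(\sigma)=1$ in $\out^\ast(\cGG_{g,n})$. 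Injectivity of $\check{b}_{g,n}$ on $G_\Q$ (Corollary~\ref{galois-faithful}) then forces $\sigma=1$, whence $\ker\rho_C=\{1\}$.

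The one point I would check carefully — and the only place the formal argument could slip — is the passage from the trivial inner automorphism $\check{\cG}_{g,n}(s_\xi(\sigma))=\mathrm{id}$ to the trivial outer class $\check{b}_{g,n}(\sigma)$: this requires that the lift ambiguity inherent in the definition of $\check{b}_{g,n}$ lie in $\hGG_{g,n}=\pi_1(\cM_{g,n}\times\QQ,\ol{\xi})$ and hence act on $\cGG_{g,n}$ by inner automorphisms only, which is exactly the compatibility of $\check{\cG}_{g,n}$ with $\hat{\cG}_{g,n}$ noted before the statement. So I do not expect a genuine obstacle at this stage: all the real difficulty of the theorem has already been discharged into Corollary~\ref{galois-faithful}, and through it into Theorem~\ref{outer}, Theorem~\ref{faithful} and Bely\v{\i}'s theorem.
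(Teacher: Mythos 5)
Your argument is correct and is essentially the paper's own proof: both reduce the statement to the injectivity of $\check{b}_{g,n}$ from Corollary~\ref{galois-faithful}, via the observation that $\check{\cG}_{g,n}\circ s_\xi(h)$ acts on $\cGG_{g,n}$ by conjugation by $\rho_C(h)$, so that $\ker\rho_C\leq\ker(\check{\cG}_{g,n}\circ s_\xi)=\{1\}$. Your separate treatment of the type $(0,3)$ --- which Corollary~\ref{galois-faithful} does not cover since $3g-3+n=0$ there --- addresses a point the paper's proof silently omits, and your fix via Bely\v{\i}/Matsumoto is the right one.
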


\begin{proof}Let us assume that $C$ is the curve which appears in the diagram $(7.1)$.
From Corollary~\ref{galois-faithful}, it follows that the homomorphism
$\check{\cG}_{g,n}\circ s_\xi\co G_\K\ra\aut^\ast(\cGG_{g,n})$ is injective. This homomorphism 
can be described as follows. For $h\in G_\K$, the automorphism $\check{\cG}_{g,n}\circ s_\xi(h)$ of 
$\cGG_{g,n}<\out(\pi_1(C\times_\K\ol{\Q}))$ is defined by the assignment, for $f\in\cGG_{g,n}$:
$$f\longmapsto\rho_C(h)\cdot f\cdot\rho_C(h)^{-1}\in\out(\pi_1(C\times_\K\ol{\Q})).$$
Therefore, it follows that $\ker\,\rho_C\leq\ker(\check{\cG}_{g,n}\circ s_\xi)=\{1\}.$

\end{proof}

\begin{remark}\label{tangential}For suitable choices of tangential base point $\xi$, for instance, 
if $\cC_{\ol{\xi}}$ is a nodal curve with an elliptic tail defined over $\Q$, it is not difficult 
to infer directly from Bely\v{\i}'s Theorem that the associated representation 
$\check{\cG}_{g,n}\circ s_\xi\co G_\Q\ra\aut^\ast(\cGG_{g,n})$ is faithful.
However, this does not imply, at least not formally, the faithfulness of the representation 
$\check{\cG}_{g,n}\circ s_\xi$ for any other choice of base point $\xi$.
\end{remark}

A $\Q$-rational point of the moduli stack $\ccM_{g,n}$, given for instance by a suitable graph of 
punctured projective lines, determines a $\Q$-rational tangential base point of $\cM_{g,n}$ and so
a $\pi_1(\cM_{g,n}\times\ol{\Q},\ol{\xi})$-conjugacy class of splittings 
of the short exact sequence:
$$1\ra\pi_1(\cM_{g,n}\times\ol{\Q},\ol{\xi})\ra\pi_1(\cM_{g,n},\ol{\xi})\sr{p}{\ra}G_\Q\ra 1.$$
Let $\sg\co G _\Q\ra\pi_1(\cM_{g,n},\ol{\xi})$ be a section of $p$ corresponding to one of
these splittings. It determines an isomorphism:
$$\pi_1(\cM_{g,n},\ol{\xi})\cong\hGG_{g,n}\rtimes_{\td{b}_{g,n}} G_\Q,$$
where we have identified $\pi_1(\cM_{g,n}\times\ol{\Q},\ol{\xi})$ with $\hGG_{g,n}$ and $G_\Q$
acts on it by means of the representation 
$\td{b}_{g,n}:=\hat{\cG}_{g,n}\circ\sg\co G_\Q\ra\aut^\ast(\hGG_{g,n})$. 

In terms of this 
isomorphism, the representation $\hat{\cG}_{g,n}\co\pi_1(\cM_{g,n},\ol{\xi})\ra\aut^\ast(\hGG_{g,n})$ 
is then described by sending an element $(f,h)\in\hGG_{g,n}\rtimes_\sg G_\Q$ to the automorphism of 
$\hGG_{g,n}$ given by the composition $\inn\,f\cdot\td{b}_{g,n}(h)$.

\begin{proposition}\label{GT}\begin{enumerate}
\item For $2g-2+n>0$, the kernel of the natural representation
$\hat{\cG}_{g,n}\co\pi_1(\cM_{g,n},\ol{\xi})\ra\aut^\ast(\hGG_{g,n})$ can be identified with
the center of $\hGG_{g,n}$. 

In particular, for $g\leq 2$, the representation $\hat{\cG}_{g,n}$ is faithful for $(g,n)\neq (1,1),(2,0)$.
Otherwise its kernel is spanned by the hyperelliptic involution.

\item For $g\geq 3$ and $n\geq 0$, the kernel of the representation
$\check{\cG}_{g,n}\co\pi_1(\cM_{g,n},\ol{\xi})\ra\aut^\ast(\cGG_{g,n})$ can be identified with
the congruence kernel of $\hGG_{g,n}$.
\end{enumerate}
\end{proposition}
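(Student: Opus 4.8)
The plan is to exploit the semidirect product decomposition $\pi_1(\cM_{g,n},\ol{\xi})\cong\hGG_{g,n}\rtimes_\sg G_\Q$ recorded just above, together with the explicit formula $\hat{\cG}_{g,n}(f,h)=\inn(f)\cdot\ol{b}(h)$ for $(f,h)\in\hGG_{g,n}\rtimes_\sg G_\Q$. For the first assertion I would compute the kernel directly: an element $(f,h)$ lies in $\ker\hat{\cG}_{g,n}$ precisely when $\inn(f)\cdot\ol{b}(h)=\mathrm{id}$, that is, when $\ol{b}(h)=\inn(f^{-1})$. The left-hand side lies in $\Im\,\ol{b}$ and the right-hand side in $\mathrm{Inn}(\hGG_{g,n})$; since, by Corollary~\ref{galois-faithful}, these two subgroups of $\aut^\ast(\hGG_{g,n})$ meet only in the identity, both must equal $\mathrm{id}$. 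Faithfulness of $\ol{b}$ then forces $h=1$, while $\inn(f^{-1})=\mathrm{id}$ means $f\in Z(\hGG_{g,n})$. Hence $\ker\hat{\cG}_{g,n}\cong Z(\hGG_{g,n})$, the isomorphism being restriction to the first factor.

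For the ``in particular'' clause I would invoke the equality $Z(\hGG_{g,n})=Z(\GG_{g,n})$ valid for $g\leq 2$ (Proposition~3.2 in \cite{Hyp}) and then read off the center from Corollary~\ref{center-free}: it is trivial unless $(g,n)=(1,1)$ or $(2,0)$, in which cases it is generated by the hyperelliptic involution. This yields faithfulness of $\hat{\cG}_{g,n}$ away from those two pairs and pins down the kernel in them.

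For the second assertion I would pass to the quotient $\Psi_{g,n}\co\hGG_{g,n}\tura\cGG_{g,n}$, whose kernel is by definition the congruence kernel. Writing $\check{b}'\co G_\Q\ra\aut^\ast(\cGG_{g,n})$ for the lift $\check{\cG}_{g,n}\circ\sg$, the stated compatibility of $\check{\cG}_{g,n}$ with $\hat{\cG}_{g,n}$ gives $\check{\cG}_{g,n}(f,h)=\inn(\Psi_{g,n}(f))\cdot\check{b}'(h)$. Thus $(f,h)\in\ker\check{\cG}_{g,n}$ iff $\check{b}'(h)=\inn(\Psi_{g,n}(f^{-1}))$ is inner; projecting to $\out^\ast(\cGG_{g,n})$ this says $\check{b}_{g,n}(h)=1$, whence $h=1$ by the faithfulness of $\check{b}_{g,n}$ established in Corollary~\ref{galois-faithful}. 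With $h=1$ we are left with $\inn(\Psi_{g,n}(f))=\mathrm{id}$, i.e. $\Psi_{g,n}(f)\in Z(\cGG_{g,n})$, which vanishes for $g\geq 3$ by Corollary~\ref{center-free}; therefore $\Psi_{g,n}(f)=1$ and $f$ lies in the congruence kernel. Conversely every such $f$ evidently lies in the kernel, so $\ker\check{\cG}_{g,n}$ is identified with the congruence kernel of $\hGG_{g,n}$.

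The argument is short precisely because the substantive work has been front-loaded: the two faithfulness inputs $\mathrm{Inn}(\hGG_{g,n})\cap\Im\,\ol{b}=\{1\}$ and the injectivity of $\check{b}_{g,n}$ into $\out^\ast(\cGG_{g,n})$ (Corollary~\ref{galois-faithful}), together with the computation of the relevant centers (Corollary~\ref{center-free}), are exactly what make the kernel computations collapse. I do not expect a genuine obstacle; the only point requiring care is the bookkeeping between the $\aut^\ast$- and $\out^\ast$-level actions in the second part, namely that ``$\check{b}'(h)$ inner'' is equivalent to ``$\check{b}_{g,n}(h)=1$'' (immediate from the definition of $\check{b}_{g,n}$ as the composite of $\check{b}'$ with $\aut^\ast(\cGG_{g,n})\tura\out^\ast(\cGG_{g,n})$), and that $Z(\cGG_{g,n})$ really vanishes in the range $g\geq 3$ so that no stray central element survives.
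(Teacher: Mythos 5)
Your proposal is correct and follows exactly the route the paper intends: the paper gives no separate proof but derives the proposition directly from the preceding discussion (the semidirect product decomposition, the formula $\inn f\cdot\ol{b}(h)$, the facts $\mathrm{Inn}(\hGG_{g,n})\cap\Im\,\ol{b}=\{1\}$ and $Z(\hGG_{g,n})=Z(\GG_{g,n})$ for $g\leq 2$, together with Corollaries~\ref{galois-faithful} and \ref{center-free}), which is precisely the computation you carry out. Your write-up merely makes explicit the bookkeeping the paper leaves implicit, including the passage to $\cGG_{g,n}$ and the vanishing of its center for $g\geq 3$ in part (ii).
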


\begin{proof}From Corollary~\ref{galois-faithful}, it follows that $\td{b}_{g,n}$ is faithful and that 
$\mathrm{Inn}(\hGG_{g,n})\cap\Im\,\td{b}_{g,n}=\{1\}$. These two facts yield the first 
statement of $i.)$. 

By Proposition~3.2 in \cite{Hyp2}, for $g\leq 2$, it holds $Z(\hGG_{g,n})=Z(\GG_{g,n})$ and 
this yields the second statement of $i.)$. 

As to $ii.)$, it follows from the fact that, by definition, the kernel of the natural homomorphism 
$\hGG_{g,n}\ra\cGG_{g,n}$ is the congruence kernel, by Corollary~\ref{center-free}, for $g\geq 3$, 
the natural representation $\mathrm{inn}\co\cGG_{g,n}\ra\aut^\ast(\cGG_{g,n})$ is faithful and,
by Corollary~\ref{galois-faithful}, the composition $\td{b}_{g,n}':=\check{\cG}_{g,n}\circ\sg$ 
is faithful and it holds $\mathrm{Inn}(\cGG_{g,n})\cap\Im\,\td{b}_{g,n}'=\{1\}$.

\end{proof}

Since the congruence subgroup property holds in genus $\leq 2$ (cf. \cite{Asada}, \cite{Hyp}, 
\cite{Hyp2}), from the same argument used in the proof of Theorem~\ref{outer-galois}, it then follows:

\begin{corollary}\label{universal-faithful}Let $2g-2+n>0$. The arithmetic universal monodromy 
representation $\mu_{g,n}\co\pi_1(\cM_{g,n},\ol{\xi})\ra\out(\pi_1(C\times_\K\ol{\Q},\tilde{\xi}))$ 
is faithful for $g\leq 2$. Otherwise its kernel can be identified with the congruence kernel of 
the profinite Teichm\"uller group $\hGG_{g,n}$.
\end{corollary}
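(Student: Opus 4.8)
The plan is to deduce Corollary~\ref{universal-faithful} from Corollary~\ref{galois-faithful} by precisely the device used in the proof of Theorem~\ref{outer-galois}, computing $\ker\mu_{g,n}$ through a comparison with the representation $\check{\cG}_{g,n}\co\pi_1(\cM_{g,n},\ol{\xi})\ra\aut^\ast(\cGG_{g,n})$. The first, elementary, observation I would record is the inclusion $\ker\mu_{g,n}\subseteq\ker\check{\cG}_{g,n}$: by construction $\check{\cG}_{g,n}(x)$ is conjugation by $\mu_{g,n}(x)$ on the normal subgroup $\cGG_{g,n}$ of $\Im\,\mu_{g,n}$, so any $x$ with $\mu_{g,n}(x)=1$ gives $\check{\cG}_{g,n}(x)=\mathrm{id}$.

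The heart of the matter is then the factorization relating $\check{\cG}_{g,n}$ to $\check{b}_{g,n}$. The restriction of $\mu_{g,n}$ to the geometric subgroup $\pi_1(\cM_{g,n}\times\QQ,\ol{\xi})\cong\hGG_{g,n}$ is $\ol{\mu}_{g,n}=\hat{\rho}_{g,n}$, whose image is all of $\cGG_{g,n}$; hence the image of $\hGG_{g,n}$ under $\check{\cG}_{g,n}$ is exactly $\mathrm{Inn}(\cGG_{g,n})$. Consequently the composite $\pi_1(\cM_{g,n},\ol{\xi})\ra\aut^\ast(\cGG_{g,n})\ra\out^\ast(\cGG_{g,n})$ factors through $p\co\pi_1(\cM_{g,n},\ol{\xi})\tura G_\Q$, and the induced map is $\check{b}_{g,n}$ by its very definition. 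Now for $x\in\ker\mu_{g,n}$ the triviality of $\check{\cG}_{g,n}(x)$ forces $\check{b}_{g,n}(p(x))=1$; since Corollary~\ref{galois-faithful} asserts that $\check{b}_{g,n}$ is injective for $3g-3+n>0$, I conclude $p(x)=1$, i.e. $x\in\hGG_{g,n}$. But on $\hGG_{g,n}$ the map $\mu_{g,n}$ is $\hat{\rho}_{g,n}\co\hGG_{g,n}\ra\cGG_{g,n}$, so $x$ lies in $\ker\hat{\rho}_{g,n}$, the congruence kernel of $\hGG_{g,n}$. The reverse inclusion (the congruence kernel is killed by $\ol{\mu}_{g,n}$ and hence by $\mu_{g,n}$) is immediate, so $\ker\mu_{g,n}$ \emph{equals} the congruence kernel of $\hGG_{g,n}$ whenever $3g-3+n>0$.

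To complete the statement I would then split into cases. For $g\geq 3$ this identity $\ker\mu_{g,n}=\ker\hat{\rho}_{g,n}$ is the asserted description of the kernel. For $g\leq 2$ I would invoke the known validity of the congruence subgroup property in genus $\leq 2$ (Asada and \cite{Hyp}), so that the congruence kernel vanishes and $\mu_{g,n}$ is faithful; note that this automatically disposes of the potentially delicate cases $(g,n)=(1,1),(2,0)$, since there is no competition from the hyperelliptic (resp. elliptic) involution: that involution is a geometric element, acts as $-1$ on homology, and so is already sent to a nontrivial element of $\cGG_{g,n}$ by $\hat{\rho}_{g,n}$. The only remaining configuration with $2g-2+n>0$ but $3g-3+n=0$ is $(g,n)=(0,3)$, where $\hGG_{0,3}$ is trivial and $\mu_{0,3}$ is the classical Galois action on $\hP_{0,3}$, a free profinite group of rank $2$; this is faithful by Bely\v{\i}'s theorem, consistently with the $g\leq 2$ conclusion.

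I expect no genuinely new obstacle at this stage: essentially all the weight has already been carried by the faithfulness of $\check{b}_{g,n}$ in Corollary~\ref{galois-faithful}, which itself rests on the descent homomorphisms of Theorem~\ref{outer} and the Bely\v{\i} base cases. The one point that demands care, and which I would emphasize, is the choice of $\check{\cG}_{g,n}$ rather than $\hat{\cG}_{g,n}$ as the comparison representation: this is what lets me pass from the genuine action of $\pi_1(\cM_{g,n},\ol{\xi})$ on $\hP_{g,n}$ recorded by $\mu_{g,n}$ to the conjugation action on $\cGG_{g,n}$, and it is exactly this substitution that yields the clean inclusion $\ker\mu_{g,n}\subseteq\hGG_{g,n}$ and hence the precise equality $\ker\mu_{g,n}=\ker\hat{\rho}_{g,n}$.
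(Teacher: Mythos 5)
Your argument is correct and is essentially the paper's own: the paper derives this corollary by "the same argument used in the proof of Theorem~\ref{outer-galois}", i.e.\ comparing $\mu_{g,n}$ with $\check{\cG}_{g,n}$, using the faithfulness of $\check{b}_{g,n}$ from Corollary~\ref{galois-faithful} to force $\ker\mu_{g,n}\subseteq\hGG_{g,n}$, and then identifying $\ker\mu_{g,n}\cap\hGG_{g,n}$ with $\ker\hat{\rho}_{g,n}$, the congruence kernel (which vanishes for $g\leq 2$ by the known congruence subgroup property). Your explicit handling of the case $(g,n)=(0,3)$ and your remark that the hyperelliptic involution causes no trouble are welcome clarifications, but they do not change the route.
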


\subsection*{Acknowledgements}
This work was begun during my stay at the University of Costa Rica in San Jos\'e and completed 
during my stay at the Department of Mathematics of the University of los Andes in Bogot\'a.
I thank both institutions for their support. I also thank Shinichi Mochizuki who commented on a previous 
version of this paper and suggested the statement of Lemma~\ref{mochizuki}.

\bigskip

\noindent Address:\, Departamento de Matem\'aticas, Universidad de los Andes, \\
\hspace*{1.7cm}  Carrera $1^a$ $\mathrm{N}^o$ 18A-10, Bogot\'a, Colombia.
\\
E--mail:\,\,\, marco.boggi@gmail.com

\end{document}